\tikzstyle{arc}=[->,shorten <=3pt, shorten >=3pt,
\tikzstyle{edge}=[shorten <=2pt, shorten >=2pt,
\tikzstyle{blueE}=[shorten <=2pt, shorten >=2pt,
\tikzstyle{redE}=[shorten <=2pt, shorten >=2pt,
\tikzstyle{vertex}=[circle, fill=white, draw,
\tikzstyle{redA}=[->, dashed, shorten <=3pt, shorten >=3pt, red, >=stealth,
\tikzstyle{blueA}=[->, shorten <=3pt, shorten >=3pt, blue, >=stealth,
\newtheorem{theorem}{Theorem}[section]
\newtheorem{lemma}[theorem]{Lemma}
\newtheorem{corollary}[theorem]{Corollary}
\newtheorem{proposition}[theorem]{Proposition}
\newtheorem{observation}[theorem]{Observation}
\theoremstyle{remark}
\newtheorem{remark}{Remark}
\theoremstyle{definition}
\newtheorem{example}{Example}
\newcommand{\calC}{\mathcal C}
\newcommand{\calF}{\mathcal F}
\newcommand{\calT}{\mathcal T}
\newcommand{\bA}{\mathbb A}
\newcommand{\bB}{\mathbb B}
\newcommand{\bC}{\mathbb C}
\newcommand{\bD}{\mathbb D}
\newcommand{\bG}{\mathbb G}
\newcommand{\bH}{\mathbb H}
\newcommand{\bS}{\mathbb S}
\newcommand{\bF}{\mathbb F}
\newcommand{\ignore}[1]{}
\DeclareMathOperator{\CSP}{CSP}
\DeclareMathOperator{\SNP}{SNP}
\DeclareMathOperator{\Forb}{Forb}
\DeclareMathOperator{\NP}{NP}
\renewcommand{\P}{\textnormal{P}}
\DeclareMathOperator{\coNP}{coNP}
\DeclareMathOperator{\PO}{P}
\DeclareMathOperator{\HER}{Her}
\DeclareMathOperator{\HerFO}{Her-FO}
\DeclareMathOperator{\FO}{FO}
\DeclareMathOperator{\Lin}{Lin}
\DeclareMathOperator{\Cyc}{Cyc}
\title{Hereditary First-Order Logic: \\ the tractable quantifier prefix classes\thanks{This project has received funding from the European Union
(Project POCOCOP, ERC Synergy Grant 101071674).
Views and opinions expressed are however those of the author(s) only and do not
necessarily reflect those of the European Union or the European Research
Council Executive Agency. Neither the European Union nor the granting
authority can be held responsible for them.}} 
 \author[1]{Manuel Bodirsky\thanks{manuel.bodirsky@tu-dresden.de}}
 \author[1]{Santiago Guzm\'an-Pro\thanks{santiago.guzman\_pro@tu-dresden.de}}
 \affil[1]{Institut f\"ur Algebra, TU Dresden, Germany}
\begin{document}

\maketitle

\begin{abstract}
Many computational problems can be modelled  as the class  of all finite structures $\bA$
that satisfy a fixed first-order sentence $\phi$ \emph{hereditarily}, i.e., we require that
every (induced) substructure of $\bA$ satisfies $\phi$. We call the corresponding
computational problem the \emph{hereditary model checking} problem for $\phi$, and
denote it by $\HER(\phi)$.

We present a complete description of the quantifier prefixes for $\phi$ such that
$\HER(\phi)$ is in P; we show that for every other quantifier prefix there exists a formula
$\phi$ with this prefix such that $\HER(\phi)$ is coNP-complete.
Specifically, we show that if $Q$ is of the form $\forall^\ast\exists\forall^\ast$
or of the form $\forall^\ast\exists^\ast$, then $\HER(\phi)$ can be solved in polynomial time
whenever the quantifier prefix of $\phi$ is $Q$. Otherwise, $Q$ contains $\exists \exists \forall$ 
or $\exists \forall \exists$ as a subword, and in this case, there is
a first-order formula $\phi$ whose quantifier prefix is $Q$ and $\HER(\phi)$ is coNP-complete.
Moreover, we show that there is no algorithm that decides for a given first-order formula $\phi$
whether $\HER(\phi)$ is in P (unless P$=$NP).

\end{abstract}

\tableofcontents

\newpage


\setcounter{page}{1}

\section{Introduction} 
\emph{Vertex-deletion problems} were first studied as optimization problems~\cite{krishamoorthySICOMP8,lewisJCSS20}:
Given an input graph $\bG$ determine the minimum number of vertices to be deleted so that the remaining (induced) subgraph
belongs to a specified class $\mathcal C$.  Krishnamoorthy and Deo~\cite{krishamoorthySICOMP8}
showed that this problem is NP-hard for several natural graph classes $\mathcal C$ such as trees,
planar, bipartite, hamiltonian, interval, and chordal graphs. Shortly after,
Lewis and Yanakkakis~\cite{lewisJCSS20} proved that  if $\mathcal C$ is a non-trivial hereditary
class of finite graphs (i.e., $\mathcal C$ is an infinite class of finite graphs which is closed under induced subgraphs and which is not
the class of all finite graphs), then the vertex-deletion problem defined
by $\mathcal C$ is $\NP$-hard.

Given the previous hardness results, it was natural to study vertex-deletion problems
from the viewpoint of approximation or of parametrized complexity. Regarding the former,
Fujito~\cite{fujitoDAM86} proposed a unified  polynomial-time  algorithm that approximates
an optimal solution to a vertex-deletion problem defined by a non-trivial hereditary class.
Some well-known results include the fixed parameter tractability of the odd-cycle transversal
problem~\cite{reedORL32}, and of the feedback vertex set problem~\cite{ramanACMTA2} ---
the former corresponds to vertex-deletion problem for bipartite graphs, and the
latter to the vertex-deletion problem for forests.

Recently, the parametrized complexity perspective has been studied systematically for
vertex-deletion problems defined by graph classes expressible by some first-order
sentence $\phi$~\cite{banachMFCS24,fominTCS64}. 
In this paper we are interested in 
the class of graphs $\bG$ such that no matter how many vertices are removed from $\bG$, 
the remaining induced subgraph does not satisfy $\phi$. Since first-order logic
is closed under negations, we prefer the positive phrasing of this problem:
Given a graph $\bG$, test whether all non-empty induced subgraphs of $\bG$ satisfy
a specified first-order formula $\psi$.

\subsection*{Hereditary first-order logic}
From now on we work in the setting of relational structures, for which graphs and digraphs are
prototypical examples. 
We use the convention that all structures have a non-empty domain.
We begin by introducing \textit{hereditary first-order logic}.
A structure $\bA$ \emph{hereditarily satisfies $\phi$} if every substructure $\bB$ of $\bA$ satisfies $\phi$. 
We denote by $\HER(\phi)$ the class of finite structures that hereditarily satisfy $\phi$. The
\textit{hereditary model checking problem} for $\phi$ consists of deciding whether an input
structure $\bA$ belongs to $\HER(\phi)$.  Since verifying whether a finite structure $\bA$
models a fixed first-order formula $\phi$ can be done in polynomial time, it follows that $\HER(\phi)$ is
in coNP. 

We say that a class $\calC$ of finite $\tau$-structures is \textit{hereditarily first-order
definable} if there is a first-order sentence $\phi$ such that $\calC = \HER(\phi)$. 
In this case, we also say that $\calC$ is in $\HerFO$.
Clearly, every class in $\HerFO$ is \emph{hereditary}, i.e., closed under taking substructures. 

\subsubsection*{Three simple examples}
\label{sect:three-expl}
Clearly, every hereditary class in $\FO$ is also in $\HerFO$. The following graph and digraph classes are 
 in $\HerFO$, but not in $\FO$.

\begin{example}[Forests]\label{ex:forests}
    Consider a first-order formula $\phi$ stating that ``there is a loopless vertex of degree at most $1$''.
    Clearly, every forest hereditarily satisfies $\phi$. Conversely, suppose that $\bG$ is a finite graph that
    hereditarily satisfies $\phi$. Then $\bG$ has a loopless vertex $v$ of degree at most $1$. We inductively see
    that the subgraph with vertex set $(G\setminus\{v\})$ is a forest, i.e., has no cycles, and since $v$ has
    degree at most $1$ and does not have a loop, we conclude that $\bG$ is a forest.\footnote{This example
    naturally generalizes to \emph{$k$-degenerate} graphs, i.e.,
    the class of graphs $\bG$ such that every subgraph of $\bG$ contains a vertex of degree at most $k$.}
\end{example}

\begin{example}[Chordal graphs]\label{ex:chordal}
    A graph $\bG$ is \emph{chordal} if every cycle of $\bG$ contains a chord; equivalently, if 
    $\bG$ contains no induced cycle of length $n\ge 4$. Rose~\cite{roseJMAA32} proved that a graph $\bG$ is
    chordal if and only if every induced
    subgraph contains a vertex whose neighbourhood induces a clique. Hence, if $\phi$ is a first-order
    sentence stating ``there is a (loopless) vertex  $v$ such that every two neighbours of $v$ are
    adjacent'',  then $\HER(\phi)$ describes the class of chordal graphs.
    It is known that membership in this class can be decided in polynomial time~\cite{RoseTajanLuecker}.
\end{example}

\begin{example}[Directed acyclic digraphs]\label{ex:DAG}
    The class of acyclic digraphs belongs to $\HerFO$ as well.  Indeed, it suffices to consider
    the first-order sentence $\exists x\forall y. \lnot E(x,y)$, i.e., the first order sentence that
    states that there exists a \emph{sink}, i.e., a vertex without outgoing edges. Equivalently,
    the constraint satisfaction problem 
    for $(\mathbb Q, <)$ (denoted by CSP$(\mathbb Q, <)$, see Section~\ref{sect:prelims}) 
    is in $\HerFO$. Also this computational problem can be solved in polynomial time (e.g., by depth-first-search).
\end{example}

The fact that these properties are not expressible in FO is well-known, and can for example be shown
using Ehrenfeucht-Fra\"i{ss}\'e games; see~\cite{EbbinghausFlum}.

$\HerFO$ is a particularly natural formalism when it comes to the description of constraint
satisfaction problems. In Section~\ref{sec:CSP} we will give natural syntactic restrictions
of first-order sentences $\phi$ that imply that $\HER(\phi)$ describes a CSP, as well
as a variety of examples of CSPs that are expressible in $\HerFO$.
An example of a coNP-complete CSP in $\HerFO$ can be found in Theorem~\ref{thm:Forb-TD} and in
Corollary~\ref{thm:HensonCSP}.

We are interested in complexity classification for the hereditary model-checking problem
for $\phi$. Some complexity results can be derived from general results by viewing 
$\HerFO$ as a fragment of universal second-order logic; we review these results in the next section.  

\subsection*{Prefix classifications}
It is straightforward to observe that $\HER(\phi)$ can be expressed in
universal monadic second-order logic.

\begin{observation}\label{obs:HerFO-UMSO}
   Consider a first-order $\tau$-formula $\phi:=Q_1x_1\dots Q_nx_n. \psi(x_1,\dots, x_n)$ and
   a finite structure $\bA$. If $S$ is a unary predicate not in $\tau$, then $\bA$  hereditarily
   satisfies $\phi$ if and only if $\bA$ models
   \[
   \forall S \; Q_1x_1\dots Q_nx_n \left (\exists z. S(z)\land \bigwedge_{i\in U}S(x_i) \implies \bigwedge_{i\in E}S(x_i)\land \psi(x_1,\dots, x_n) \right) ,
   \]
   where $U$ (respectively, $E$) is the set of indices $i\in \{1,\dots,n\}$ such that $Q_i$ is a universal (respectively, existential) quantifier.
\end{observation}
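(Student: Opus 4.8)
The plan is to verify the biconditional pointwise in the second-order variable $S$. Since $\forall S$ ranges over all subsets of the fixed domain $A$, and the substructures of $\bA$ are exactly the structures induced on the non-empty subsets of $A$, it suffices to show, for each fixed $S\subseteq A$ (with the symbol $S$ now interpreted as this set), that $\bA$ models $Q_1x_1\dots Q_nx_n\big((\exists z.\,S(z))\land\bigwedge_{i\in U}S(x_i)\implies\bigwedge_{i\in E}S(x_i)\land\psi\big)$ if and only if $S=\emptyset$ or the substructure $\bA[S]$ of $\bA$ induced on $S$ satisfies $\phi$; on the right I read the condition as vacuously true when $S=\emptyset$, which is consistent with the running convention that structures are non-empty, so there is no substructure on the empty domain. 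I would first dispose of the case $S=\emptyset$: then $\exists z.\,S(z)$ is false in $\bA$, the implication is logically valid because its antecedent is unsatisfiable, and so the prenex sentence holds in $\bA$ since $A\neq\emptyset$; this matches the right-hand side.

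I would then treat the case $S\neq\emptyset$. Here $\exists z.\,S(z)$ is true in $\bA$, so the sentence above reduces to $\bA\models Q_1x_1\dots Q_nx_n\,\chi$, where $\chi:=\bigwedge_{i\in U}S(x_i)\implies(\bigwedge_{i\in E}S(x_i)\land\psi)$, and it remains to show this is equivalent to $\bA[S]\models\phi$. I would argue this through the model-checking game: for a prenex sentence $Q_1x_1\dots Q_nx_n.\theta$ over a structure $\bC$, the variables $x_1,\dots,x_n$ are instantiated in order, Verifier choosing the value of $x_i$ when $Q_i$ is existential and Falsifier choosing it when $Q_i$ is universal, Verifier winning a play iff $\theta$ holds under the resulting assignment; the sentence is true in $\bC$ iff Verifier has a winning strategy. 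Comparing the game for $\phi$ on $\bA[S]$ with the game for $Q_1x_1\dots Q_nx_n.\chi$ on $\bA$, the facts I would use are: (i) if in the $\bA$-game Falsifier ever picks an element outside $S$ then the antecedent $\bigwedge_{i\in U}S(x_i)$ becomes false and Verifier wins that play regardless of later moves, so Falsifier loses nothing by restricting to moves legal in the $\bA[S]$-game; (ii) along any play in which Falsifier stays inside $S$ the antecedent is true and $\chi$ reduces to $\bigwedge_{i\in E}S(x_i)\land\psi$, so against such play a winning strategy for Verifier must keep all of Verifier's own picks inside $S$; and (iii) once every chosen element lies in $S$ the quantifier-free $\psi$ satisfies $\bA\models\psi(\bar a)$ iff $\bA[S]\models\psi(\bar a)$. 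From (i)--(iii), a winning Verifier strategy for the $\bA$-game restricts to a winning Verifier strategy for the $\bA[S]$-game, and conversely a winning Verifier strategy for the $\bA[S]$-game extends to one for the $\bA$-game by having Verifier play arbitrarily on any branch where Falsifier has already left $S$, since those branches are already won. This would give $\bA\models Q_1x_1\dots Q_nx_n.\chi$ iff $\bA[S]\models\phi$, completing the argument.

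The only delicate point, and the sole place where non-emptiness of $S$ is used, is the placement of the membership guards: the standard relativization of $\phi$ to $S$ would attach, at each existential position $x_i$, a conjunct ``$S(x_i)\land\cdots$'' \emph{in place}, whereas the sentence in the Observation gathers all these conjuncts into the single block $\bigwedge_{i\in E}S(x_i)$ in the consequent; fact (ii) --- informally, ``it is never disadvantageous for Verifier to choose inside $S$'' --- is exactly what reconciles the two forms, and it breaks down for $S=\emptyset$, which is precisely why the separate guard $\exists z.\,S(z)$ is needed. If one prefers to avoid games, the same equivalence can be proved by induction on $n$: relativize the outermost quantifier, apply the inductive hypothesis, and use $A\neq\emptyset$ to pull the guard of an outermost existential quantifier out of its scope and into the matrix.
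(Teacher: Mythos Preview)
The paper states this result as an observation and offers no proof; your argument is correct and supplies the details the paper omits. The game-theoretic framing handles cleanly the one non-routine feature of the displayed formula, namely that the membership constraints $\bigwedge_{i\in E}S(x_i)$ on the existentially quantified variables are collected into the consequent rather than attached locally at each existential quantifier as in the textbook relativization; your point~(ii), that a winning Verifier strategy against a Falsifier confined to $S$ must itself stay inside $S$ (else Falsifier could continue inside the non-empty $S$ and force a loss), is precisely what bridges the two formulations when $S\neq\emptyset$, and your separate treatment of $S=\emptyset$ via the guard $\exists z.\,S(z)$ is also correct.
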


The question whether a given existential second-order (ESO)  sentence describes a polynomial-time solvable problem is easily seen to
be undecidable (see, e.g., Theorem 1.4.2 in~\cite{Book}). This motivates  quantifier-prefix dichotomy
results for existential second-order logic~\cite{Eiter2010,GottlobKolaitisSchwentick}. For instance,
the classification for monadic ESO is as follows (see Figure 1 (a) in~\cite{GottlobKolaitisSchwentick}
for the full classification). For every quantifier prefix $Q\in \{\exists,\forall\}^\ast$ the following holds: 
\begin{itemize}
    \item either $Q$ is of the form $\exists^\ast\forall$, and in this case every monadic ESO  sentence $\Phi$ whose first-order part has
    a quantifier-free prefix $Q$  
    is polynomial-time decidable, or
    \item $Q$ contains $\forall\forall$ or $\forall\exists$ as subwords, and in this case  there is an ESO sentence $\Phi$ whose
    first-order part has quantifier prefix $Q$ and deciding $\Phi$ is NP-complete.
\end{itemize}

By taking complements, it follows that $\HER(\phi)$ is in P whenever  $\phi$ is a first-order sentence with a quantifier prefix of the form $\forall^* \exists$. However, since $\HerFO$ is more restrictive than universal second-order logic, it does not follow that $\HerFO$ contains coNP-complete 
problems for first-order formulas with quantifier prefix $\exists \exists$ or $\exists \forall$. In fact, our results imply that if the quantifier prefix of $\phi$ equals 
$\exists \exists$ or $\exists \forall$, then $\HER(\phi)$ is in P.

\subsection*{Our contributions}
Our first result shows that it is undecidable to test for a given first-order formula $\phi$ whether $\HER(\phi)$ can
be solved in polynomial time. We thus follow the approach described above and consider fragments of
HerFO defined by restricting the quantifier prefix on the first-order sentence.
We present the following classification of the computational complexity of problems in $\HerFO$ based
on the quantifier prefix $Q \in \{\exists,\forall\}^*$ of the fixed first-order formula: 
\begin{itemize}
    \item either $Q$ is of the form $\forall^* \exists^*$ or the form $\forall^* \exists \forall^*$, and in this case $\HER(\phi)$ is
    polynomial-time decidable for every first-order formula $\phi$ with quantifier prefix $Q$, or
    \item  $Q$ contains $\exists\exists\forall$ or  $\exists \forall \exists$ as a subword, and in this case 
    there are first-order formulas with quantifier prefix $Q$ such that $\HER(\phi)$ is $\coNP$-complete.
\end{itemize}

\subsection*{Outline} 
We begin by recalling some basic concepts from finite model theory (Section~\ref{sect:prelims}). 
We then list further examples of problems in HerFO, and prove that certain problems cannot be expressed in $\HerFO$  (Section~\ref{sec:CSP});
it will also become clear why hereditary model checking is quite natural in the context of constraint satisfaction.
The undecidability of tractability of $\HerFO$ can be found in Section~\ref{sect:undec}.
The classification of the complexity of $\HerFO$ depending on the allowed quantifier
prefix (Theorem~\ref{thm:Q-classification}) is the main result of Section~\ref{sect:prefixes}. 
This section also contains the mentioned quantifier-prefix connection between the decidability of the meta problem for $\HerFO$ and the decidability of satisfiability in the finite.
Moreover, we specialize our results to the class of constraint satisfaction problems.
A series
of problems that are left open can be found in Section~\ref{sect:open}.

\section{Preliminaries} 
\label{sect:prelims} 
We assume basic familiarity with first-order logic and we follow standard notation from
model theory, as, e.g., in~\cite{Hodges}. We also use standard notions from complexity theory.

\subsection*{(First-order) structures}
\label{sect:structs}
Given a relational signature $\tau$ and a $\tau$-structure $\bA$,
we denote by $R^\bA$ the interpretation in $\bA$ of a relation symbol $R\in \tau$. Also, 
we denote relational structures with letters $\bA, \bB, \bC,\dots$, and their domains by
$A,B,C,\dots$. In this article, structures have non-empty domains. 

If $\bA$ and $\bB$ are $\tau$-structures,
then a \emph{homomorphism} from $\bA$ to $\bB$ is a map $f \colon A \to B$ such that
for all $a_1,\dots,a_k \in A$ and $R \in \tau$ of arity $k$, if $(a_1,\dots,a_k) \in R^{\bA}$, then 
$R(f(a_1),\dots,f(a_k)) \in R^{\bB}$ 
We write $\bA \to \bB$ if there exists a homomorphism from $\bA$ to $\bB$, and we denote by 
$\CSP(\bB)$ the class of finite structures $\bA$ such that $\bA\to \bB$. 
Let ${\mathcal C}$ be a class of finite $\tau$-structures.
We say that ${\mathcal C}$ is 
\begin{itemize}
    \item \emph{closed under homomorphisms} if
    for every $\bB \in {\mathcal C}$, if $\bB \to \bA$, then $\bA \in {\mathcal C}$ as well.
    \item \emph{closed under inverse homomorphisms} if for every $\bB \in {\mathcal C}$, if $\bA \to \bB$, then $\bA \in {\mathcal C}$ as well (i.e., the complement of ${\mathcal C}$ in the class of all finite $\tau$-structures is closed under homomorphisms). 
\end{itemize}
Note that $\CSP(\bB)$ is closed under inverse homomorphisms. 

If $\bA$ and $\bB$ are $\tau$-structures with disjoint domains $A$ and $B$,
respectively, then the \emph{disjoint union} $\bA \uplus \bB$ is the $\tau$-structure $\bC$ with domain
$A \cup B$ and the relation $R^{\bC} = R^{\bA} \cup R^{\bB}$ for every $R \in \tau$. Note that
$\CSP(\bB)$ is \emph{closed under disjoint unions}, i.e., if $\bA \in {\mathcal C}$ and
$\bB \in {\mathcal C}$, then $\bA \uplus \bB \in {\mathcal C}$. 

\begin{observation}\label{obs:csp}
    Let $\tau$ be a finite relational signature. 
    A class of finite $\tau$-structures ${\mathcal C}$ is of the form $\CSP(\bB)$ for some
    countably infinite $\tau$-structure $\bB$ if and only if ${\mathcal C}$ is closed under
    inverse homomorphisms and disjoint unions.
\end{observation}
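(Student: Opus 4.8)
The plan is to prove the two implications separately. The forward implication needs essentially nothing new: if ${\mathcal C}=\CSP(\bB)$, then closure under inverse homomorphisms follows by composing a homomorphism $\bA\to\bA'$ with a witness $\bA'\to\bB$, and closure under disjoint unions follows by combining witnesses $\bA\to\bB$ and $\bA'\to\bB$ into a single homomorphism $\bA\uplus\bA'\to\bB$ --- both facts are already recorded in the text above.

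For the reverse implication, assume ${\mathcal C}$ is closed under inverse homomorphisms and under disjoint unions. I would first dispose of the degenerate case: since a one-element structure with all relations empty maps homomorphically into any non-empty structure, $\CSP(\bB)$ is never empty, so the empty class is not of the required form; hence we may assume ${\mathcal C}\neq\emptyset$ (or simply restrict attention to non-empty classes from the outset). Because $\tau$ is finite and relational, there are only countably many finite $\tau$-structures up to isomorphism, so ${\mathcal C}$ realizes at most countably many isomorphism types; and because ${\mathcal C}$ is non-empty and closed under disjoint unions, it contains a structure of every finite cardinality (iterate disjoint unions of a fixed member), hence infinitely many types. Choose representatives $\bC_i$, $i\in\mathbb{N}$, one from each isomorphism class of ${\mathcal C}$, with pairwise disjoint domains, and set $\bB:=\biguplus_{i\in\mathbb{N}}\bC_i$, a countably infinite $\tau$-structure.

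It then remains to check ${\mathcal C}=\CSP(\bB)$. The inclusion ${\mathcal C}\subseteq\CSP(\bB)$ is immediate: every $\bA\in{\mathcal C}$ is isomorphic to some $\bC_i$, and the embedding of the component $\bC_i$ into $\bB$ witnesses $\bA\to\bB$. For $\CSP(\bB)\subseteq{\mathcal C}$, take a homomorphism $h\colon\bA\to\bB$ with $\bA$ finite; then $h(A)$ meets only finitely many components, say $\bC_{i_1},\dots,\bC_{i_m}$, so $h$ is in fact a homomorphism $\bA\to\bC_{i_1}\uplus\cdots\uplus\bC_{i_m}$. This finite disjoint union lies in ${\mathcal C}$ by (iterated) closure under disjoint unions, and then $\bA\in{\mathcal C}$ by closure under inverse homomorphisms. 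I do not expect a genuine obstacle here; the only points requiring a little care are the cardinality bookkeeping that makes $\bB$ countably infinite and the observation that the homomorphic image of a finite structure is contained in a finite sub-union of $\bB$.
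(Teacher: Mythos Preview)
Your proof is correct and is the standard argument for this folklore fact. The paper does not supply its own proof of the observation; it is stated in the preliminaries without argument, and later (in the proof that negative connected $\HerFO$ sentences define CSPs) the paper simply refers the reader to the literature for it. Your handling of the degenerate empty class is appropriate: as you note, the biconditional as literally stated fails for $\mathcal{C}=\emptyset$, since every $\CSP(\bB)$ contains the one-element structure with all relations empty --- a harmless technicality the paper does not address.
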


For examples we will often use graphs and digraphs. Since this paper is deeply involved
with logic and finite model theory, we will think of digraphs as binary structures with signature
$\{E\}$, following and adapting standard notions from graph theory~\cite{bondy2008} to this setting. 
In particular, given a digraph $\bD$, we call $E^\bD$ the \emph{edge set} of $\bD$, and its
elements we call \emph{edges} or \emph{arcs}.
Also, a \emph{graph} is a digraph whose
edge set is a  symmetric relation, and an \emph{oriented graph} is a digraph whose edge
set is an anti-symmetric relation. Given a positive integer $n$, we denote by 
$K_n$ the complete graph on $n$ vertices, i.e., the graph with vertex set $[n]$
and edge set $(i,j)$ where $i\neq j$. A \emph{tournament} is an oriented graph whose symmetric
closure is a complete graph,  and an \emph{oriented path} is an oriented graph whose symmetric
closure is a path. We denote by $\vec{P_n}$ the \emph{directed path} on $n$ vertices, 
 i.e., the oriented path with vertex set $[n]$ and edges $(i,i+1)$ for $i\in[n-1]$.

\subsection*{Fragments of first-order logic}
A first-order $\tau$-formula $\phi$ is called \emph{positive} if it does not use the negation symbol $\neg$
(so it just uses the logical symbols $\forall,\exists,\wedge,\vee,=$, variables and symbols from $\tau$). 
The negation of a positive formula is called \emph{negative}; note  that every negative formula is equivalent
to a formula in prenex conjunctive normal form where every atomic formula appears in negated form, and all
negation symbols are in front of atomic formulas; such formulas will also be called negative.

Consider a quantifier-free formula $\psi$ using only variables, the symbols $\land$ and $=$, and
symbols from $\tau$, and let $\psi'$ be the formula obtained from $\psi$  by  iteratively removing each
equality conjunct $x = y$ and substituting each occurrence of the variable $y$ by the variable $x$.
We now consider the structure $\bC_\psi$ whose vertex set is the set of
variables appearing in $\psi'$, and the interpretation of $R\in \tau$ consists of those tuples
$\overline x$ such that  $\psi'$ contains the positive literal $R(\overline x)$.
We say that a negative first-order sentence $\phi$ in prenex conjunctive normal form
is  \emph{connected} if for every clause $\varphi$  of the quantifier-free part of $\phi$, the structure
$\bC_{\lnot \varphi}$ is connected.

\section{Hereditary model checking and CSPs}
\label{sec:CSP}
In this section we see that hereditary model checking naturally arises in the context
of constraint satisfaction problems. We do so by providing several examples of $\CSP$s
in $\HerFO$. We also include inexpressibly examples of $\HerFO$ which help build intuition
about hereditary model checking. 

It is well-known that a universal first-order sentence $\phi$ in conjunctive normal form
describes a $\CSP$ if $\phi$ is negative and connected.
The following observation claims that $\HER(\phi)$  describes a $\CSP$ 
whenever $\phi$ is negative and connected. Note that we do not require that $\phi$ is  universal;  
in the case of universal sentences $\phi$, we have that $\HER(\phi)$ equals the class of finite models of $\phi$.

\begin{observation}\label{obs:HERFO-CSP}
    The following statements hold for every first-order formula $\phi$ in prenex conjunctive normal form.
    \begin{itemize}
        \item If $\phi$ is negative, then $\HER(\phi)$ is closed under inverse homomorphisms. 
        \item If $\phi$ is negative and connected, then there is a structure $\bS$ such that
        $\HER(\phi) = \CSP(\bS)$. 
    \end{itemize}
\end{observation}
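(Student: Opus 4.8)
The plan is to establish the two claims separately, and then to combine the second claim with Observation~\ref{obs:csp}. For the first claim, suppose $\phi$ is negative and in prenex conjunctive normal form, say $\phi = Q_1 x_1 \cdots Q_n x_n.\; \bigwedge_j \varphi_j$, where each clause $\varphi_j$ is a disjunction of negated atoms. We must show that $\HER(\phi)$ is closed under inverse homomorphisms, i.e., if $\bA \to \bB$ and $\bB \in \HER(\phi)$, then $\bA \in \HER(\phi)$. First I would reduce to the case of substructures: since every substructure $\bA'$ of $\bA$ still admits a homomorphism to $\bB$ (by restricting), it suffices to show that $\bA \models \phi$ whenever $\bA \to \bB$ and every substructure of $\bB$ satisfies $\phi$ — in fact whenever $\bB \models \phi$. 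So the core fact is: \emph{if $\phi$ is negative and $\bA \to \bB$, then $\bB \models \phi$ implies $\bA \models \phi$}. This is the standard observation that negative sentences are preserved under inverse homomorphisms; I would prove it by playing the Hintikka-style evaluation game / by induction on the quantifier prefix, using the homomorphism $f \colon A \to B$ to transport witnesses for existential quantifiers backwards-compatibly and to read off that each negated atom $\lnot R(\bar x)$ surviving in $\bB$ also survives in $\bA$ (if $(a_1,\dots,a_k)\in R^\bA$ then $(f(a_1),\dots,f(a_k)) \in R^\bB$, contrapositively giving what we need). Strictly, one interleaves: for $\forall$ in $\bA$ pick an arbitrary element, for $\exists$ in $\bA$ use the winning strategy in $\bB$ pulled back along $f$; the quantifier-free part then evaluates correctly because each literal is a negated atom and $f$ is a homomorphism.

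For the second claim, assume in addition that $\phi$ is connected, meaning that for each clause $\varphi_j$ the structure $\bC_{\lnot\varphi_j}$ is connected. By Observation~\ref{obs:csp} it is enough to show that $\HER(\phi)$ is closed under inverse homomorphisms — which is the first claim — \emph{and} closed under disjoint unions. So the remaining work is to show: if $\bA, \bB \in \HER(\phi)$ then $\bA \uplus \bB \in \HER(\phi)$. Since $\HER(\phi)$ is closed under substructures, and every substructure of $\bA \uplus \bB$ is the disjoint union of a substructure of $\bA$ and a substructure of $\bB$, it suffices to prove that $\bA \uplus \bB \models \phi$ whenever $\bA \models \phi$ and $\bB \models \phi$. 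Here is where connectedness enters. Write $\phi = Q_1 x_1 \cdots Q_n x_n.\; \bigwedge_j \varphi_j$. Evaluating on $\bA \uplus \bB$: I would show that one of the two "summand strategies" works globally, but the delicate point is that the $\forall$-quantifiers of the adversary may place variables on \emph{both} sides. The key step is to argue that a clause $\varphi_j = \bigvee_\ell \lnot R_\ell(\bar x_\ell)$ is satisfied whenever the assigned tuple is "split" across $A$ and $B$: if not, every atom $R_\ell(\bar x_\ell)$ would hold in $\bA \uplus \bB$, hence (as $\bA \uplus \bB$ has no relation-tuples mixing $A$ and $B$) every $\bar x_\ell$ lies entirely in $A$ or entirely in $B$; connectivity of $\bC_{\lnot\varphi_j}$ — whose vertices are exactly the variables of $\varphi_j$ and whose edges are induced by these atoms — then forces all of $\varphi_j$'s variables onto the \emph{same} side, contradicting that the assignment was split on a variable of $\varphi_j$. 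Therefore only assignments landing a whole clause on a single side can falsify that clause, and on that side the appropriate summand ($\bA$ or $\bB$) already satisfies $\phi$. I would make this precise with the evaluation game: on $\bA \uplus \bB$, when it is our turn at an $\exists$-quantifier we pick a witness on whichever side a "majority so far" lives — more carefully, we maintain the invariant that within each clause the variables already assigned lie on one side, and we answer $\exists$-moves accordingly, appealing to the winning strategy for that side; the adversary's $\forall$-moves can only break a clause by being put on the side where that clause already "wants to be," at which point the side-strategy handles it.

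The step I expect to be the main obstacle is exactly this disjoint-union argument with a general $Q_1\cdots Q_n$ prefix: unlike the purely universal case (where one just checks clause by clause), the alternation of quantifiers means our existential witnesses must be chosen before seeing where later universal variables of the \emph{same clause} will land, so the bookkeeping of the invariant "each not-yet-decided clause is confined to at most one side" requires care, and one must verify that this invariant can always be maintained by choosing witnesses on the already-committed side (and arbitrarily, say on the $\bA$ side, for clauses not yet touched). Once the invariant is set up correctly, connectedness does the rest. I would also note the remark the authors make in passing — when $\phi$ happens to be universal, $\HER(\phi)$ is literally $\MOD(\phi)$, and the classical fact that a negative connected universal sentence defines a CSP is recovered as the special case; this can serve as a sanity check on the invariant.
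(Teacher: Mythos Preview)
There is a genuine gap in your first item. Your ``core fact'' --- that a negative sentence $\phi$ is preserved under inverse homomorphisms, i.e., $\bA \to \bB$ and $\bB \models \phi$ imply $\bA \models \phi$ --- is \emph{false}. Take $\phi = \exists x.\,\lnot P(x)$, let $\bA$ be a one-element structure with $P^{\bA} = A$, and let $\bB$ extend $\bA$ by one new element not in $P$; the inclusion is a homomorphism, $\bB \models \phi$, yet $\bA \not\models \phi$. Your game argument breaks exactly at ``use the winning strategy in $\bB$ pulled back along $f$'': there is no preimage for the witness when $f$ is not surjective. The paper's fix is to use the full hypothesis $\bB \in \HER(\phi)$ rather than merely $\bB \models \phi$: for each substructure $\bA'$ of $\bA$, restrict $f$ to a \emph{surjective} homomorphism onto the substructure of $\bB$ with domain $f[A']$, which, being a substructure of $\bB$, models $\phi$; now the classical fact that positive sentences are preserved by surjective homomorphisms (applied to $\lnot\phi$) finishes the argument, and at that point your Hintikka game would also go through.

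For the second item your overall plan is correct and matches the paper's (which simply asserts that closure of the finite models of $\phi$ under disjoint unions is ``straightforward''), but the per-clause invariant you propose is more delicate than necessary and, as you suspect, awkward to maintain when one $\exists$-variable appears in several clauses already committed to different sides. A cleaner strategy avoids this entirely: run two shadow games on $\bA$ and on $\bB$ in parallel (feeding a fixed dummy element to a shadow whenever the adversary plays on the other side), and in the real game always answer $\exists$-moves with the $\bA$-shadow's witness. If a clause were falsified in $\bA \uplus \bB$, connectedness forces all its variables to one side; if that side is $A$, the real and $\bA$-shadow assignments agree on those variables, contradicting that the $\bA$-strategy wins; if that side is $B$, the clause contains no $\exists$-variables (those were all played in $A$), so the real and $\bB$-shadow assignments agree on its variables, contradicting that the $\bB$-strategy wins.
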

\begin{proof}
    To prove the first itemized statement, suppose that there is a homomorphism $f\colon \bA\to \bB$
    and $\bB\in \HER(\phi)$. Let $\bA'$ be a substructure of $\bA$. The substructure of $\bB$ with
    vertex set $f[A']$ models $\phi$. Since surjective homomorphisms preserve positive first-order
    formulas,  we conclude that $\bA'$ models $\phi$. 
    
    It is straightforward to observe that if $\phi$ is negative and connected, then the class of
    finite models of $\phi$ is closed under disjoint unions. Hence, if every substructure of $\bA$ and every substructure
    of $\bB$ satisfy $\phi$, then every substructure of $\bA \uplus \bB$ satisfies $\phi$, i.e.,
    $\HER(\phi)$ is closed under disjoint unions.
    By the first claim, $\HER(\phi)$ is also closed under inverse homomorphisms, and it thus follows
    that there is a structure $\bB$ such that $\CSP(\bB) = \HER(\phi)$ ---
    for any class ${\mathcal C}$ of $\tau$-structures there exists a $\tau$-structure $\bS$ such that
    ${\mathcal C} = \CSP(\bS)$ if and only if ${\mathcal C}$ is closed under disjoint unions and inverse
    homomorphisms (see, e.g.,~\cite{Book}).
\end{proof}

\begin{remark}\label{rem:MSO}
    Every $\CSP$ in $\HerFO$  is the $\CSP$ of an $\omega$-categorical structure. This follows from a result in~\cite{BKR},
    which states that every $\CSP$ in monadic second-order logic (MSO) is the $\CSP$ for an $\omega$-categorical structure;
    clearly,  $\HerFO$ is a fragment of monadic second-order logic (see, e.g., Observation~\ref{obs:HerFO-UMSO}).
\end{remark}

\begin{example}\label{ex:CSP-N=neq}
    Let $\tau = \{N, EQ\}$ be the signature where $N$ and $EQ$ are two binary relational
    symbols which will encode ``equal'' and ``not equal'', respectively. 
    Note that a $\tau$-structure $\bA$ belongs to $\CSP(\mathbb N, =, \neq)$
    if and only if it contains no loop $N(x,x)$ and no \emph{contradicting cycle}, i.e., vertices $x_1,\dots, x_n$
    such that $EQ(x_1,x_{i+1})$ for all $i\in [n-1]$ and $N(x_1,x_n)$. Despite the fact that the existence of such a cycle is not a first-order property, 
    the problem $\CSP(\mathbb N,=, \neq)$ is in
    $\HerFO$.
    For a positive integer $k$ we write $d_{EQ}(x) = k$ to denote the first-order
    formula stating ``$x$ has exactly $k$ neighbours in the relation $EQ$ (different from $x$)'',
    and $d_{EQ}(x) \neq k$ for its negation. Now, consider the formula 
    \[
    \phi:= \forall x,y \exists z(\lnot N(x,y) \lor d_{EQ}(x)\neq 1 \lor d_{EQ}(y)\neq 1 \lor d_{EQ}(z) \neq 2).
    \]
    If $\bA$ does not hereditarily model $\phi$, then there is a subset $\{a_1,\dots, a_n\}$
    such that $N(a_1, a_n)$ and every $a_i$ has degree exactly $2$ in the relation $EQ$ except
    for $a_1$ and $a_n$, and so $\bA$ contains a contradicting cycle. Conversely, if $\bA$ has a
    contradicting cycle one can find a substructure
    $\bB$ of $\bA$ that does not model $\phi$, namely, any shortest contradicting cycle. 
    Therefore, $\CSP(\mathbb N, =, \neq)$ is hereditarily defined by the first-order sentence
    $\phi \land \forall x. \lnot N(x,x)$.
\end{example}

Some further examples of polynomial-time solvable CSPs in HerFO include the following
(see Appendix~\ref{ap:P-examples} for details).
\begin{itemize}[itemsep = 0.8pt]
    \item The class of digraphs that represent the cover relation of a poset (Example~\ref{ex:cover-relation}):
    this and $\CSP(\mathbb Q, <)$ (Example~\ref{ex:DAG}) are examples of infinite-domain CSPs in HerFO but not in FO. 
    \item $\CSP(\vec{P_3})$ (Example~\ref{ex:P2}):  a finite-domain CSP in HerFO but not in FO.
    \item $\CSP(\mathbb Q,\{(x,y,z)\colon x <\max\{y,z\})$ (Example~\ref{ex:and-or-scheduling}):
    an infinite-domain CSP in HerFO but not even in Datalog.
\end{itemize}

\subsection*{Hard examples}
In this paper we include two examples of coNP-complete CSPs in HerFO. Here, we 
present the example that we will also use in Corollaries~\ref{cor:undecidablity-fragments}
and~\ref{cor:Q-prefix-CSP}; to 
see the other one (possibly the better-known one as well) see Appendix~\ref{ap:Henson}.

Let $\tau$ consist of two binary symbols $E_b$ and $E_r$; we think of a $\tau$-structure
is a digraph with blue and red edges. For a positive integer
$n\ge 2$ we denote by $\mathbb{TD}_n$ the structure with vertex set $[n]$ such that $([n],E_b)$ is
a Hamiltonian directed cycle $1,\dots, n$, and $([n],E_r)$ is a complete (symmetric) graph.
Let $\mathcal T$ be the set containing the one-element structure with a red loop, and the one-element structure with a blue loop, and
all structures $\mathbb{TD}_n$ for $n\ge 2$. By Observation~\ref{obs:csp}, the class  $\Forb(\mathcal T)$, i.e., the
class of loopless $\tau$-structures $\bA$ for which there is no homomorphism
$\mathbb{TD}_n\to \bA$ for any $n\ge 2$, is a CSP, i.e., is of the form $\CSP(\bB_\mathcal T)$ for some $\tau$-structure $\bB_\mathcal T$.
The reader familiar with Fra\"isse limits may 
notice that $\bB_{\mathcal T}$ can be chosen to be a countable homogeneous structure.  We show that $\CSP(\bB_{\mathcal T})$
is a coNP-complete CSP in $\HerFO$. Consider the $\tau$-sentence
\[
\phi_\mathcal T:= \exists x,y\forall a \;(\lnot E_b(a,a)\land \lnot E_r(a,a) \land (\lnot E_b(x,a) \lor (x\neq y \land \lnot E_r(x,y)))).
\]
Observe that for every positive integer $n\ge 2$ the
structure $\mathbb{TD}_n$ does not satisfy $\phi_\mathcal T$,
and  neither do loops.
Also,  since
$\phi_\mathcal T$ is negative and connected,
it follows by Observation~\ref{obs:HERFO-CSP}
that if $\mathbb F\to\bA$ for some structure $\bF\in\mathcal T$, then $\bA$ does not hereditarily
satisfy $\phi_\mathcal T$. On the other hand, observe that if
a $\tau$-structure $\bA$ does not hereditarily satisfy $\phi_\mathcal T$, then $\bA$
contains a loop, or there is a subset $A'\subseteq A$ such that
$(A',E_r^{\bA'})$ is a complete symmetric graph with at least two vertices, and 
every vertex $x \in A'$ has a blue out-neighbour.
So if $\bA$ contains no loops, then
the shortest directed blue cycle in $A'$ induces a structure isomorphic to
$\mathbb{TD}_n$ for some $n\ge 2$. Therefore, $\HER(\phi_\mathcal T) = \Forb(\mathcal T) = \CSP(\bB_\mathcal T)$.

\begin{theorem}\label{thm:Forb-TD}
    $\Forb(\mathcal T)$ is a $\coNP$-complete $\CSP$ hereditarily definable by
    an $\exists\exists\forall$- and by an $\exists\forall\exists$-sentence.
\end{theorem}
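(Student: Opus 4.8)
The statement has two parts: (i) $\Forb(\mathcal T)$ is $\coNP$-complete, and (ii) it is hereditarily definable by sentences of the two claimed quantifier-prefix shapes. Part (ii) is essentially already discharged in the text preceding the theorem: the sentence $\phi_{\mathcal T}$ displayed above has prefix $\exists\exists\forall$ and satisfies $\HER(\phi_{\mathcal T})=\Forb(\mathcal T)$, so I only need to produce a companion sentence with prefix $\exists\forall\exists$. The plan is to take the same quantifier-free matrix (loop-freeness for $a$, together with the disjunction saying that $x$ has no blue out-neighbour $a$, or else $x\neq y$ and $\lnot E_r(x,y)$) and simply permute the quantifier block to $\exists x\,\forall a\,\exists y$; since $y$ does not occur in any atom involving $a$, the variable $y$ can be pulled inward or outward freely, so $\exists x\exists y\forall a\,\psi$ and $\exists x\forall a\exists y\,\psi$ are logically equivalent. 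Thus the $\exists\forall\exists$-sentence defines the same hereditary class. (I would state this equivalence as a one-line observation rather than belabour it.)

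For part (i), membership in $\coNP$ is immediate from the general remark in the introduction that $\HER(\phi)\in\coNP$ for every fixed first-order $\phi$; here $\phi=\phi_{\mathcal T}$. So the real content is $\coNP$-hardness of the complement, i.e.\ NP-hardness of deciding whether an input $\tau$-structure $\bA$ admits a homomorphism from some $\mathbb{TD}_n$ (equivalently, contains a loop, or an induced copy of some $\mathbb{TD}_n$ after identifying the red relation as a clique). I would reduce from a known NP-complete problem; the natural candidate is \textsc{Directed Hamiltonicity} or, better, something that matches the structure $\mathbb{TD}_n$ closely — the presence of a spanning blue directed cycle on a red clique is exactly a Hamiltonian cycle. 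Concretely: given a digraph $\bG=(V,E)$ on which we want to decide Hamiltonicity, build $\bA$ with the same vertex set, put $E_b^{\bA}=E$, and $E_r^{\bA}=\{(u,v):u\neq v\}$ (the full clique on $V$). Then $\bA$ is loopless, and a homomorphism $\mathbb{TD}_n\to\bA$ picks a blue directed cycle $a_1,\dots,a_n,a_1$ in $\bG$ whose vertices are pairwise red-distinct, i.e.\ pairwise distinct (red edges are all non-loops and the image must respect $\lnot E_r$-freeness is automatic since $E_r$ is the full clique) — hmm, one must be careful that the homomorphic image need not be injective, so a short directed blue closed walk suffices for a homomorphism from some $\mathbb{TD}_n$, which means $\bA\notin\Forb(\mathcal T)$ iff $\bG$ has a directed closed walk of length $\ge 2$, which is trivially true for almost any $\bG$. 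That would be too weak, so the reduction must instead be from a problem where the "some $n$" cycle captures the hardness: I would reduce from \textsc{Hamiltonian Cycle} but engineer the red relation so that it forces the blue cycle to be spanning and simple — e.g.\ make $E_r$ encode "distinct" only for a carefully chosen set, or pad the instance. The cleanest route is probably to reduce directly from the NP-hardness of deciding, for a digraph $\bG$, whether $\bG$ contains a directed cycle visiting a prescribed subset $S$ of vertices, or to use the coNP-completeness of $\CSP$s forbidding all directed cycles of a structure as already appearing in the literature on Henson digraphs; the appendix reference (Appendix~\ref{ap:Henson}, Corollary~\ref{thm:HensonCSP}) suggests the authors have an analogous hardness proof there that I would mirror.

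The main obstacle is precisely this: a homomorphism from $\mathbb{TD}_n$ need not be injective, so "$\bA\notin\Forb(\mathcal T)$" only says $\bA$ contains a blue closed walk all of whose vertices pairwise carry red edges — and red edges being forced to be a clique would trivialise it. The fix is to use the red relation nontrivially as a \emph{constraint graph*, encoding an instance of a genuinely hard problem (for instance, reducing from \textsc{3-Colourability} or \textsc{Hamiltonicity} where the "blue cycle on a red clique" pattern corresponds to a solution, with the red structure being a proper colouring constraint or an independence constraint rather than the full clique). I would therefore spend the bulk of the proof designing this gadget: vertices of $\bA$ represent (element, position) pairs or (variable, value) pairs; blue edges encode a cyclic succession; red edges encode the NP-hard relation; and then verify both directions — a solution to the source instance yields an embedded $\mathbb{TD}_n$, and conversely any homomorphism from some $\mathbb{TD}_n$ (injective or not) yields a solution, using loop-freeness of $\bA$ to rule out degenerate images. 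Once the gadget is in place the equivalence $\HER(\phi_{\mathcal T})=\Forb(\mathcal T)$ already established gives $\coNP$-hardness of $\HER(\phi_{\mathcal T})$ directly, completing the theorem.
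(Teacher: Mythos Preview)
Your handling of part (ii) matches the paper exactly: swap the quantifier block to $\exists x\,\forall a\,\exists y$ and observe that since $y$ does not appear in any atom involving $a$, the two sentences are logically equivalent.

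For part (i), however, your proposal has both a local confusion and a global gap. The local confusion is in your analysis of the naive Hamiltonicity reduction. You first (correctly) observe that the full loopless red clique forces the images of distinct $\mathbb{TD}_n$-vertices to be distinct, and then immediately contradict yourself by saying the homomorphism ``need not be injective'' and that any short blue closed walk would do. In fact the homomorphism \emph{is} forced to be injective; the reason the reduction fails is different: with a full red clique, $\bA\notin\Forb(\mathcal T)$ iff $\bG$ contains \emph{some} simple directed cycle (of any length $\ge 2$), which is just acyclicity testing and lies in $\PO$. So your conclusion that the reduction is too weak is right, but the mechanism is not the one you name.

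The global gap is that you never actually produce a working reduction; you only announce that the bulk of the proof would be a gadget construction. The paper's reduction is from \textsc{3SAT} and is quite short, so let me sketch it. Given clauses $C_1,\dots,C_m$ with $C_i=(c_i^1,c_i^2,c_i^3)$, take vertices $a_i^j$ for $i\in[m]$, $j\in[3]$; put a blue edge from every $a_i^j$ to every $a_{i+1}^k$ (indices mod $m$); and put a \emph{red} edge between $a_i^k$ and $a_j^l$ exactly when the literals $c_i^k$ and $c_j^l$ are \emph{not} negations of each other. A satisfying assignment selects one true literal per clause, yielding a set that is a blue directed $m$-cycle and a red clique, hence a copy of $\mathbb{TD}_m$. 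Conversely, any homomorphic image of some $\mathbb{TD}_n$ is (by red-looplessness) an injective blue cycle on a red clique; the blue layering forces exactly one vertex per clause, and the red-clique condition says no two chosen literals are contradictory, so setting those literals true is consistent and satisfies every clause. This is precisely the ``(clause, literal-position)'' encoding you gestured at, with the crucial design choice being that red encodes \emph{non}-contradiction rather than equality or distinctness.
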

\begin{proof}
   First notice that if $\phi_\mathcal T'$ is the sentence obtained
   from $\phi_\mathcal T$ by changing the prefix $\exists x,y\forall a$
   to $\exists x\forall a \exists y$, then $\phi_\mathcal T$ and $\phi_\mathcal T'$ are
   equivalent sentences. Hence, it follows from the discussion above
   that $\Forb(\mathcal T)$ is hereditarily definable by an $\exists\exists\forall$-
   and by an $\exists\forall\exists$-sentence. We now show that $\Forb(\mathcal T)$
   is $\coNP$-complete. 
   Consider an instance $\psi$ of 3SAT with variables $V$ and clauses
    $C_1,\dots, C_m$, where $C_i = (c_i^1, c_i^2, c_i^3)$ and $c_i^k \in \{v,\lnot v\}$
    for some $v\in V$. We construct a $\tau$-structure $\bA$
    with vertices $a_i^j$ for each $i\in[m]$ and $j\in [3]$.
    The blue edges of $\bA$ consist of all pairs $(a_i^j, a_{i+1}^k)$ and $(a_m^j,a_1^k)$,
    for $i\in [m-1]$ and $j,k\in [3]$; the red edges of $\bA$ correspond to the
    relation $c_i^k\neq \lnot c_j^l$, i.e., $(a_i^k, a_j^l)\in E_r^\bA$ if and only
    if the literal $c_i^k$ does not equal the negation of the literal $\lnot c_j^l$
    --- in particular, $E_r^\bA$ is a symmetric relation.
    Clearly, $\bA$ is a loopless $\tau$-structure. 
    
    We claim that 
    $\psi$ is satisfiable if and only if there is a homomorphism
    $\mathbb{TD}_n\to \bA$ for some $n\ge 2$. Suppose there is a
    satisfying assignment for $\psi$, and consider the vertices
    $a_i^{k_i}$ where $c_i^{k_i}$ is true in the clause $C_i$. 
    Then  
    the substructure $\bA'$ with domain $a_1^{k_1},\dots, a_m^{k_m}$ satisfies
    that every vertex has a blue out-neighbour. Clearly, $c_i^{k_i}$
    cannot be the negation of $c_j^{k_j}$, so $(A', E_r^{\bA'})$ is a complete
    red graph. This shows that if $\psi$ is satisfiable, then $\bA\not\in \Forb(\mathcal T)$. 
    
    Conversely, notice that if there is a substructure $\bA'$ of $\bA$ that satisfies that every vertex
    has a blue out-neighbour, then $A'$ contains a vertex $a_i^{k_i}$ for each $i\in[m]$.
    Moreover, if $(A',E_r^{\bA'})$ is a complete graph, then $A'$ contains at most
    one vertex $a_i^{k_i}$ for each $i\in[m]$. With similar arguments as before, 
    one can notice the $\psi$ is satisfiable by considering the evaluation  $f\colon V\to \{0,1\}$
    defined by $f(v) = 1$ if there is some clause $C_i$ such that
    $v = c_i^k$ and $a_i^k\in A'$.
\end{proof}

\subsection*{Inexpressible examples}
In this section we study the limitations of the expressive power of HerFO. Clearly, a class $\mathcal C$ is first-order definable if and only if 
the complement of $\calC$ is first-order definable.
A structure $\bA$ is a \emph{minimal obstruction} of a hereditary class $\calC$ if $\bA\not\in \calC$ but every
proper substructure $\bA'$ of $\bA$ belongs to $\calC$. 
We show 
that $\calC \in \HerFO$ if and only if the complement of $\calC$ contains a first-order
definable subclass $\calF'$ that contains all minimal obstructions of $\calC$ (Lemma~\ref{lem:trivial-HerFO}). 

We then apply this observation to show that the class of bipartite graphs
is not in HerFO (Example~\ref{ex:CSP-K2}).It is well-known that a graph is
bipartite if and only if it does not contain an odd cycle, and that the class
of odd cycles cannot be expressed by a first-order formula.
However, this is not enough to show that the class of  bipartite graphs is not in
HerFO: there are properties $\mathcal F$ that are not first-order definable, but
the class of all ${\mathcal F}$-free structures is in $\HerFO$; see Example~\ref{ex:CSP-N=neq}. 
To prove that the class of bipartite graphs is not in HerFO, we therefore need the following lemma.

\begin{lemma}\label{lem:trivial-HerFO}
    Let $\calC$ be a hereditary class of finite $\tau$-structures  and let $\calF$ be the class 
    of minimal obstructions of $\calC$. Then $\calC$ is hereditarily first-order
    definable if and only if there is a first-order sentence $\psi$ such that
    \begin{itemize}
        \item if $\mathbb F\in \calF$, then $\mathbb F\models \psi$, and
        \item if $\bA$ is a finite $\tau$-structure such that $\bA\models \psi$, then there is an embedding $\mathbb F\hookrightarrow \bA$
        for some $\mathbb F\in \calF$.
    \end{itemize}
\end{lemma}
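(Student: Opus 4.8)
The plan is to prove the two implications by passing between $\phi$ and $\psi$ via negation: if $\calC=\HER(\phi)$ one takes $\psi:=\lnot\phi$, and conversely, given a sentence $\psi$ with the two listed properties one sets $\phi:=\lnot\psi$ and shows $\calC=\HER(\phi)$. The conceptual point that makes everything work is that, since $\calC$ is hereditary and all structures are finite, a structure $\bA$ lies outside $\calC$ if and only if some substructure of $\bA$ is a minimal obstruction: passing to a substructure of $\bA\notin\calC$ of smallest cardinality among those not in $\calC$, one obtains a structure that is not in $\calC$ but all of whose proper substructures are (by minimality together with transitivity of the substructure relation). Dually, a minimal obstruction $\mathbb F$ has the distinguishing feature that every \emph{proper} substructure of $\mathbb F$ already lies in $\calC$.

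For the direction ``$\calC\in\HerFO\Rightarrow$'', write $\calC=\HER(\phi)$ and put $\psi:=\lnot\phi$. If $\mathbb F\in\calF$, then every proper substructure of $\mathbb F$ is in $\HER(\phi)$ and hence satisfies $\phi$ (any structure in $\HER(\phi)$ being a substructure of itself); since $\mathbb F\notin\HER(\phi)$, some substructure of $\mathbb F$ fails $\phi$, and by the previous sentence this substructure can only be $\mathbb F$ itself, so $\mathbb F\models\psi$. This yields the first bullet. For the second bullet, if $\bA\models\psi$ then $\bA$ fails $\phi$, so $\bA\notin\HER(\phi)=\calC$, and by the observation above some minimal obstruction $\mathbb F\in\calF$ embeds into $\bA$.

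For the converse, suppose $\psi$ satisfies the two bullets and set $\phi:=\lnot\psi$; we check $\HER(\phi)=\calC$. If $\bA\in\calC$ and some substructure $\bB$ of $\bA$ satisfied $\psi$, then by the second bullet some $\mathbb F\in\calF$ would embed into $\bB$, hence into $\bA$, contradicting heredity of $\calC$ and $\mathbb F\notin\calC$; so no substructure of $\bA$ satisfies $\psi$, i.e.\ every substructure satisfies $\lnot\psi=\phi$, and $\bA\in\HER(\phi)$. Conversely, if $\bA\notin\calC$, the observation above yields an embedding $\mathbb F\hookrightarrow\bA$ with $\mathbb F\in\calF$; by the first bullet $\mathbb F\models\psi$, so the image of this embedding is a substructure of $\bA$ satisfying $\psi$, hence $\bA\notin\HER(\phi)$.

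I do not expect a genuine obstacle here; the only step deserving attention is the asymmetry exploited in the first bullet of the forward direction — namely that a minimal obstruction $\mathbb F$ must \emph{itself}, and not merely through one of its proper substructures, satisfy $\lnot\phi$ — which is precisely where the minimality of the obstruction is used. Everything else is a routine application of heredity of $\calC$ and of finiteness of the structures involved.
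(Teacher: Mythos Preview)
Your proof is correct and follows essentially the same approach as the paper: take $\psi:=\lnot\phi$ for the forward direction and $\phi:=\lnot\psi$ for the converse, using minimality of obstructions to force $\mathbb F$ itself (rather than a proper substructure) to satisfy $\lnot\phi$. The only difference is that you spell out the converse direction in full, whereas the paper dismisses it as ``similarly straightforward''.
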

\begin{proof}
    Suppose that $\calC = \HER(\phi)$ for some first-order formula $\phi$. We claim that $\psi := \neg \phi$ satisfies
    both itemized statements. Firstly, note that if $\bF \in {\mathcal F}$,  then $\bF \notin \HER(\phi)$, so 
    some substructure of $\bF$ does not satisfy $\phi$.
    Moreover, all substructures of $\bF$ belong to $\HER(\phi)$, so we have that $\bF$ itself does not satisfy $\phi$, and hence satisfies $\psi$. 
    If $\bA$ is a finite $\tau$-structure such that $\bA \models \psi$, then $\bA \notin {\mathcal C}$, and hence there exists $\bF \in {\mathcal F}$
    which embeds into $\bA$. This shows the forward implication of the statement. Conversely, if there exists a formula $\psi$ that satisfies both items of the statement, then 
    it is similarly straightforward to show that 
    ${\mathcal C} = \HER(\neg \psi)$.
\end{proof}

Building on this simple lemma we can now use standard Ehrenfeucht-Fra{\"i}ss\'e arguments
to show that certain hereditary classes are not in HerFO.
If $\bA$ and $\bA'$ are $\tau$-structures, we write $\bA \equiv_k \bA'$ if $\bA$ and $\bA'$ satisfy the same
first-order $\tau$-sentences with at most $k$ variables.

\begin{example}[Bipartite graphs not in $\HerFO$]\label{ex:CSP-K2}
    It is well known that the minimal obstructions of the class of bipartite graphs 
    are all odd symmetric cycles and the non-symmetric edge. For every positive integer $k$,
    there is a large enough odd cycle $\bC$ and a large enough even cycle $\bC'$ such that
    $\bC \equiv_k \bC'$ (this can be shown by an Ehrenfeucht-Fra{\"i}ss\'e argument; see, e.g.,~\cite{EbbinghausFlum}).
    Hence, we conclude via Lemma~\ref{lem:trivial-HerFO} that the class of bipartite graphs is
    not in $\HerFO$. Moreover, note that $\CSP(K_2)$ is the class of bipartite  digraphs.
    So it also follows from the existence of such cycles $\bC\equiv_k\bC'$ and Lemma~\ref{lem:trivial-HerFO}
    that $\CSP(K_2)$ is not in $\HerFO$.
\end{example}

In the appendix we use Lemma~\ref{lem:trivial-HerFO} to show that $\CSP(\mathbb Q, <, =)$ is not in $\HerFO$ (Example~\ref{ex:Q<=}).

\section{Undecidability of the tractability problem}
\label{sect:undec}
The \emph{tractability problem} for $\HerFO$ asks whether $\HER(\phi)$ can be solved in polynomial time
for a given first-order sentence $\phi$. In this section we show that if $\P\neq\NP$, then the
tractability problem for $\HerFO$ is undecidable. We begin with the following simple observation.

\begin{observation}\label{obs:FO-relative}
Consider a first-order $\tau$-sentence $\phi:=Q_1x_1\dots Q_nx_n. \psi(x_1,\dots, x_n)$. 
If $\xi(x)$ is a first-order $\tau$-formula, then there is a $\tau$-sentence $\phi_\xi$
such that a $\tau$-structure $\bA$ satisfies $\phi_\xi$ if and only if there is no element
$a\in A$ that satisfies $\xi$ or the substructure of $\bA$ with domain $\{a\in  A\colon \bA\models \xi(a)\}$ 
satisfies $\phi$. Namely, if $Q_1 =\dots =Q_n = \exists$, then 
 $$\phi_\xi:=\forall y.\lnot \xi(y) \lor \exists x_1,\dots, x_n (\bigwedge_{i\in [n]} \xi(x_i)\land \psi(x_1,\dots, x_n)).$$
 Otherwise, $$\phi_{\xi} := Q_1x_1\dots Q_nx_n \left ( \bigwedge_{i\in U}\xi(x_i) \implies \bigwedge_{i\in E}\xi(x_i)\land \psi(x_1,\dots, x_n) \right)$$ 
where $U$ (respectively, $E$) is the set of indices $i\in \{1,\dots,n\}$ such that $Q_i$ is a universal
(respectively, existential) quantifier.
\end{observation}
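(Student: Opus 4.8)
The plan is simply to verify that the two displayed sentences $\phi_\xi$ do what is claimed; this is a routine relativization argument whose only real content is the treatment of the case where no element of $\bA$ satisfies $\xi$ (this is where the convention that structures are non-empty enters, and it is why purely existential prefixes are handled separately). Fix a $\tau$-structure $\bA$ and put $B := \{a \in A : \bA \models \xi(a)\}$; when $B \neq \emptyset$ let $\bA|_B$ be the substructure of $\bA$ induced on $B$. The only model-theoretic input is the standard absoluteness of quantifier-free formulas: if $\bC$ is an induced substructure of $\bA$ and $\bar c$ is a tuple from $C$, then $\bA \models \psi(\bar c)$ if and only if $\bC \models \psi(\bar c)$ for every quantifier-free $\psi$.

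Consider first the purely existential case $\phi = \exists x_1 \cdots \exists x_n.\,\psi$. The displayed $\phi_\xi$ is $\forall y\,(\lnot\xi(y) \lor \Theta)$ with $\Theta := \exists x_1 \cdots x_n\,(\bigwedge_i \xi(x_i) \land \psi)$, and since $\Theta$ has no free occurrence of $y$ this is equivalent to $(\forall y.\,\lnot\xi(y)) \lor \Theta$. If $B = \emptyset$, the first disjunct holds, matching the right-hand side of the claimed equivalence; if $B \neq \emptyset$, the first disjunct fails, so $\bA \models \phi_\xi$ iff $\bA \models \Theta$ iff there are $a_1,\dots,a_n \in B$ with $\bA \models \psi(a_1,\dots,a_n)$, which by absoluteness is exactly $\bA|_B \models \phi$.

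In the remaining case the set $U$ of universally quantified positions is non-empty. If $B = \emptyset$, then in
\[
\phi_\xi = Q_1x_1\cdots Q_nx_n\Bigl(\bigwedge_{i\in U}\xi(x_i) \implies \bigwedge_{i\in E}\xi(x_i)\land\psi\Bigr)
\]
every assignment sends some $x_{i_0}$ with $i_0 \in U$ outside $B$, so the premise is false and the matrix is true under every assignment; hence $\bA \models \phi_\xi$, again matching the right-hand side. If $B \neq \emptyset$, I would prove $\bA \models \phi_\xi$ iff $\bA|_B \models \phi$ by induction on $n - m$, establishing for every $m \le n$ and every $\bar a \in B^m$ that $\bA$ satisfies the ``flat suffix''
\[
Q_{m+1}x_{m+1}\cdots Q_nx_n\Bigl(\bigwedge_{i\in U,\,i>m}\xi(x_i) \implies \bigwedge_{i\in E,\,i>m}\xi(x_i)\land\psi(\bar a,x_{m+1},\dots,x_n)\Bigr)
\]
if and only if $\bA|_B \models Q_{m+1}x_{m+1}\cdots Q_nx_n.\,\psi(\bar a,x_{m+1},\dots,x_n)$. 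The base case $m = n$ is absoluteness of $\psi$, and the inductive step peels off $Q_{m+1}$: as soon as $x_{m+1}$ is assigned a value in $B$ the guard $\xi(x_{m+1})$ is vacuous and the flat-suffix matrix collapses to the flat suffix for $m+1$, so the induction hypothesis applies; taking $m = 0$ gives the claim.

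The one place that needs care --- and the only place $B \neq \emptyset$ is used in this last argument --- is the existential step in the direction ``$\bA \models$ flat suffix $\Rightarrow \bA|_B \models$ suffix'': a priori a witness $a_{m+1}$ for an existential $x_{m+1}$ might lie outside $B$. But if $\xi(a_{m+1})$ failed, the conclusion of the implication in the matrix would be false, so $\bA$ would satisfy $Q_{m+2}x_{m+2}\cdots Q_nx_n\,(\bigvee_{i\in U,\,i>m+1}\lnot\xi(x_i))$; this disjunction mentions only universally quantified variables, so the $\forall$-player falsifies the matrix by assigning each of them a value in $B$ (using $B \neq \emptyset$; if there are no such variables the disjunction is empty, hence already false), a contradiction. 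So $a_{m+1} \in B$ and the induction goes through. I expect this existential-witness bookkeeping to be the only genuine obstacle; everything else is mechanical.
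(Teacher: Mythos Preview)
Your argument is correct. The paper states this as an observation and gives no proof at all, treating the displayed formulas as self-evidently doing the job; your write-up supplies exactly the routine verification the paper omits, and the one non-trivial point you isolate---that an existential witness outside $B$ would force the Falsifier-winnable formula $Q_{m+2}x_{m+2}\cdots Q_nx_n\,\bigvee_{i\in U,\,i>m+1}\lnot\xi(x_i)$, which is false because $B\neq\emptyset$ lets the universal player kill every disjunct---is handled correctly.
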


Similarly, if $U$ is a monadic predicate and $\phi$ is a first-order sentence, 
then $\phi_U := \phi_{U(x)}$ denotes the relativization of $\phi$ to the vertices in the set $U$, and
$\phi_{\lnot U} := \phi_{\neg U(x)}$ the relativization of $\phi$ to the complement of $U$. 

\begin{theorem}\label{thm:undecidability}
    If $\P\neq \coNP$, then it is undecidable to test whether the hereditary
    model-checking problem for a given first-order sentence $\phi$ is solvable in polynomial time. 
\end{theorem}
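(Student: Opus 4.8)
The plan is to reduce finite satisfiability of first-order logic --- undecidable by Trakhtenbrot's theorem --- to the meta-problem of deciding, for a given first-order sentence $\phi$, whether $\HER(\phi)$ is in $\P$. Given a sentence $\sigma$ over a finite relational signature $\tau_\sigma$, I would compute a sentence $\phi_\sigma$ over $\tau_\sigma\uplus\{E_b,E_r\}\uplus\{P,R\}$, where $E_b,E_r$ are fresh binary symbols and $P,R$ fresh unary symbols, arranged so that $\HER(\phi_\sigma)$ is the class of \emph{all} finite structures --- hence trivially in $\P$ --- when $\sigma$ has no finite model, and $\HER(\phi_\sigma)$ is $\coNP$-complete --- hence, since $\P\neq\coNP$, not in $\P$ --- when $\sigma$ has a finite model. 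A decision procedure for the meta-problem would then decide finite satisfiability, the desired contradiction. (The hypothesis $\P\neq\coNP$ is genuinely needed: if $\P=\coNP$, then every problem in $\HerFO$, being in $\coNP$, lies in $\P$, and the meta-problem is trivial.)

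Using the relativizations from Observation~\ref{obs:FO-relative} --- write $\sigma_P$ for the relativization of $\sigma$ to $P$, and $(\phi_\mathcal T)_R$ for the relativization to $R$ of the $\{E_b,E_r\}$-sentence $\phi_\mathcal T$ of Theorem~\ref{thm:Forb-TD} --- I would set
\[
\phi_\sigma \;:=\; \neg\bigl(\sigma_P \wedge \exists x\, P(x)\bigr)\;\vee\;(\phi_\mathcal T)_R .
\]
By the semantics of relativization, a finite structure $\bB$ fails $\phi_\sigma$ precisely when $P^\bB\neq\emptyset$ and the induced substructure of $\bB$ on $P^\bB$ is (as a $\tau_\sigma$-structure) a model of $\sigma$, while simultaneously $R^\bB\neq\emptyset$ and the induced substructure of $\bB$ on $R^\bB$ is a forbidden configuration of $\phi_\mathcal T$, i.e.\ it has a loop, or is a loopless structure on at least two vertices whose red reduct is complete and in which every vertex has a blue out-neighbour. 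Thus if $\sigma$ has no finite model, no $\bB$ can fail $\phi_\sigma$, and $\HER(\phi_\sigma)$ consists of all finite structures.

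For the case where $\sigma$ is finitely satisfiable, I would fix a model $\mathbb M\models\sigma$ of \emph{minimum} cardinality, so that no proper substructure of $\mathbb M$ satisfies $\sigma$, and then reduce the complement of 3SAT to $\HER(\phi_\sigma)$. Given a 3SAT instance $\psi$ with clauses $C_1,\dots,C_m$ (we may assume $m\ge 2$), let $\bA$ be the structure whose domain is the disjoint union of the vertex set $\{a_i^j\}$ of the $\{E_b,E_r\}$-structure $\bA_\psi$ built from $\psi$ in the proof of Theorem~\ref{thm:Forb-TD} with a disjoint copy of the domain $M$ of $\mathbb M$; on the first part declare $R$ to hold, install the blue and red edges of $\bA_\psi$, and no $\tau_\sigma$-edges; on the second part declare $P$ to hold, install the $\tau_\sigma$-relations of $\mathbb M$, and no blue or red edges; and let all other relations --- in particular everything between the two parts --- be empty. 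Because $\mathbb M$ has minimum cardinality, a substructure $\bB$ of $\bA$ satisfies $\sigma_P\wedge\exists x\,P(x)$ exactly when its domain contains all of $M$; and because the $R$-part carries precisely the edges of $\bA_\psi$, for some such $\bB$ the induced substructure on $R^\bB$ is a forbidden configuration of $\phi_\mathcal T$ if and only if $\bA_\psi$ has such an induced substructure, which by the proof of Theorem~\ref{thm:Forb-TD} holds exactly when $\psi$ is satisfiable (take $\bB:=M\cup A'$ for a witnessing $A'\subseteq\{a_i^j\}$). Hence $\bA\notin\HER(\phi_\sigma)$ iff $\psi$ is satisfiable; since $\HER(\phi_\sigma)\in\coNP$ always, this shows $\HER(\phi_\sigma)$ is $\coNP$-complete, and $\sigma\mapsto\phi_\sigma$ is plainly computable.

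The step I expect to be the main obstacle is the finitely-satisfiable case. The naive idea --- pasting a model of $\sigma$ onto the hard instance $\bA_\psi$ --- fails because $\sigma$ need not define a hereditary class, so such a model has substructures violating $\sigma$, and $\HER(\cdot)$ ranges over \emph{all} substructures. The conjunct $\exists x\,P(x)$, together with the two relativizations and the choice of a minimum-cardinality $\mathbb M$, is exactly what forces the substructures relevant to $\HER(\phi_\sigma)$ to be those that already contain a full copy of $\mathbb M$ inside $P$; this ``quarantines'' the $\sigma$-part so that membership reduces transparently to $\Forb(\mathcal T)$ on the $R$-part. The delicate work is checking that no unintended substructure simultaneously witnesses the $\sigma$-side and the $\phi_\mathcal T$-side, and handling the conventions of Observation~\ref{obs:FO-relative} for empty relativization domains correctly.
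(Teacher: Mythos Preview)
Your proposal is correct and follows essentially the same route as the paper: reduce Trakhtenbrot's theorem via a disjunction of the relativized negation of the input sentence with a relativized $\coNP$-hard $\HerFO$ sentence, and use a minimum-size model of the input sentence to make the substructure analysis go through. The only cosmetic differences are that the paper uses a single unary predicate $U$ (and its complement) rather than your two predicates $P,R$, and reduces from the already-known-hard $\HER(\phi_{\mathcal T})$ directly rather than re-running the 3SAT reduction inside, which is slightly more modular but otherwise the same argument.
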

\begin{proof}
    We use Trakhtenbrot's theorem, which states that there is no algorithm that decides whether a given first-order formula
    $\phi$ has a finite model~\cite{Trakhtenbrot}. We reduce this decision problem to our problem.
    
    Let $\phi$ be a first-order $\tau$-sentence, let
    $U$ be a monadic predicate not in $\tau$, and let $E$ a binary predicate also not in $\tau$.
    Let $\psi$ be a first-order $\{E\}$-sentence such that the hereditary model-checking
    problem for $\psi$ in $\coNP$-complete, e.g., $\psi$ can be chosen to be the sentence hereditarily describing
    the CSP from Theorem~\ref{thm:Forb-TD}. Building on the relativizations  $\psi_{\lnot U}$
    and $(\lnot\phi)_U$ we define the following first-order $\tau\cup\{U,E\}$-sentence
    \[
        \chi:=  (\lnot \phi)_U \lor \psi_{\lnot U}.
    \]
    We claim that if $\phi$ does not have a finite model, then $\HER(\chi)$ is
    polynomial-time solvable, and if $\phi$ has a finite model, then $\HER(\chi)$
    is $\coNP$-complete. We first observe that if $\phi$ does not have a finite model,
    then $\chi$ is valid on all finite $\tau\cup\{U,E\}$-structures. 
    To see this, let $\bA$
    be a $\tau\cup\{U,E\}$-structure. If $U^\bA = \varnothing$,
    then $\bA$ satisfies
    the first disjunct $(\lnot\phi)_U$ of $\chi$ (Observation~\ref{obs:FO-relative}),
    and if $U^\bA\neq \varnothing$, then the
    substructure induced by $U^\bA$ also satisfies $(\lnot\phi)_U$, because
    $\phi$ has no finite models. In particular, this implies that $\HER(\chi)$ 
    can be solved in polynomial-time, because every instance is a yes-instance.

    Now suppose that $\phi$ has a finite model. Let $\bS$ be the smallest
    model of $\phi$, i.e., every proper substructure of $\bS$ satisfies $\lnot\phi$.
    We present a polynomial-time reduction from $\HER(\psi)$ to $\HER(\chi)$, which implies  that $\HER(\chi)$ is
    coNP-complete. Given an $\{E\}$-structure $\bA$ we consider the $\tau\cup\{U,E\}$-structure 
    $\bB$ defined as follows:
    \begin{itemize}
        \item the domain $B$ of $\bB$ is the disjoint union $A\cup S$, 
        \item the interpretation of $U$ in $\bB$ is $S$, 
        \item the interpretation of $E$ in $\bB$ equals $E^\bA$, and
        \item for every $R\in \tau$, the interpretation of $R$ in $\bB$ is $R^\bS$.
    \end{itemize} 
    The structure $\bB$ can be computed from the structure $\bA$ in polynomial time (the structure  ${\bS}$ is constant).
    We now show that every substructure of $\bA$ satisfies $\psi$ if and only
    if every substructure of $\bB$ satisfies $\chi$. First, note that
    if a substructure $\bC$ of $\bB$ does not contain $\bS$, then $\bC$ satisfies
    $\chi$. Indeed, if $C\subseteq A$, then there is no element of $\bC$ that models
    $U(x)$, so by Observation~\ref{obs:FO-relative} it follows that $\bC\models (\lnot \phi)_U$
    and hence $\bC \models \chi$. Otherwise,  if $\varnothing \neq C\cap S \neq S$, then the substructure
    $\bC'$ of $\bC$ with domain $U^\bC$ satisfies $\lnot \phi$,  because $\bC'$ is a proper
    substructure of $\bS$, and $\bS$ is the smallest model of $\phi$. Hence, $\bC\models (\lnot \phi)_U$,
    and so $\bC\models \chi$. These observations imply that 
    every substructure $\bC$ of $\bB$ satisfies $\chi$ if and only if every
    substructure $\bD$ of $\bB$ with $S\subseteq D$ satisfies $\chi$.
    If $S = D$, then $U^\bD = D$, so no element $d$ of $\bD$ satisfies $\lnot U(d)$, 
    and similarly as above, we conclude that $\bD\models \chi$ because $\bD\models \psi_{\lnot U}$.
    Finally, we assume that $S\subseteq D$ and $D\neq S$, and  so $D\cap A\neq\varnothing$. Since
    $S\subseteq D$ and $\bS\models \phi$, we have that $\bD$ does not satisfy $(\lnot\phi)_U$.
    Hence, $\bD\models\chi$ if and only if $\bD\models \psi_{\lnot U}$, 
    and the latter holds if and only if the substructure of $\bA$ with domain 
    $D\cap A$ satisfies $\psi$. 
    We thus conclude that $\bB\in \HER(\chi)$ if and only if $\bA\in \HER(\psi)$. 
    
    All together this shows that if
    $\P\neq \coNP$, then $\phi$ has a finite model if and only if $\HER(\chi)$
    is not solvable in polynomial time.
\end{proof}

A \emph{quantifier prefix} is a word $Q\in \{\exists,\forall\}^\ast$, and we say
that a first-order formula $\phi$ in prenex normal form has quantifier prefix $Q$
if $\phi = Q_1x_1\dots Q_nx_n.\psi$ where $Q = Q_1\dots Q_n$ and $\psi$ is a quantifier-free
formula. We say that two quantifier prefixes are \emph{dual} to each other if one can be obtained from the other by
exchanging the symbols $\forall$ and $\exists$.

\begin{corollary}\label{cor:undecidablity-fragments}
    Consider a quantifier prefix 
    $Q\in \{\exists,\forall\}^\ast$, 
    and assume $\P\neq\NP$. If the finite satisfiability problem for first-order sentences whose quantifier prefix is dual to $Q$ is undecidable, 
    then the tractability problem for $\HerFO$ remains undecidable even for first-order sentences with quantifier prefix
    $Q$. In particular,  this is the case if $Q$ contains $\exists\forall\exists$ or $\exists^3\forall$ as subwords.
\end{corollary}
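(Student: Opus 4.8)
The plan is to reduce, for the given $Q$, the finite satisfiability problem for first-order sentences with quantifier prefix dual to $Q$ to the problem ``given a first-order sentence $\phi$ with quantifier prefix $Q$, is $\HER(\phi)\in\P$?''; since $\P\neq\NP$ is assumed (equivalently $\P\neq\coNP$) this suffices. The engine will be the construction in the proof of Theorem~\ref{thm:undecidability}, re-run with the quantifier prefixes tracked. First note that if $Q\in\{\exists\}^\ast$ then $\bar Q\in\{\forall\}^\ast$, and finite satisfiability of universal sentences is decidable; so under the hypothesis $Q$ contains at least one $\forall$. Moreover, if finite satisfiability for sentences with prefix $\bar Q$ is undecidable, then by the classification of the decidable prefix classes for finite satisfiability ($\exists^\ast\forall^\ast$ and $\exists^\ast\forall\exists^\ast$), the word $\bar Q$ contains $\forall\exists\forall$ or $\forall\forall\exists$ as a subword, so $Q$ contains $\exists\forall\exists$ or $\exists\exists\forall$ as a subword; I would use this below to place the hardness gadget.

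Write $Q=Q_1\cdots Q_m$ and let $\phi$ be a sentence with quantifier prefix $\bar Q$ over a signature $\tau$. Then $\neg\phi$, in prenex form, has quantifier prefix $Q$, and since $Q$ is not purely existential its relativization $(\neg\phi)_U$ to a fresh monadic symbol $U$ again has quantifier prefix $Q$ by Observation~\ref{obs:FO-relative}. Let $\psi$ be the $\coNP$-complete sentence of Theorem~\ref{thm:Forb-TD} hereditarily defining $\Forb(\calT)$, taken over a signature disjoint from $\tau\cup\{U\}$; by that theorem $\psi$ may be chosen with quantifier prefix $\exists\exists\forall$ or with quantifier prefix $\exists\forall\exists$, so the quantifier pattern of $\psi$ occurs at some of the positions $1,\dots,m$ of $Q$. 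Reusing exactly those positions, one incorporates (a relativization to the complement of $U$ of) the matrix of $\psi$ into the matrix of $(\neg\phi)_U$ as an extra disjunct; because the two matrices are over disjoint signatures, this yields a sentence $\chi_\phi$ with quantifier prefix exactly $Q$ that plays the role of ``$(\neg\phi)_U\lor\psi_{\neg U}$'' in the proof of Theorem~\ref{thm:undecidability}.

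The case analysis then mirrors Theorem~\ref{thm:undecidability}. If $\phi$ has no finite model, $\neg\phi$ is finitely valid, hence so is $(\neg\phi)_U$; since $\chi_\phi$ and $(\neg\phi)_U$ have the same prefix $Q$ and the matrix of $\chi_\phi$ is implied by that of $(\neg\phi)_U$, the sentence $\chi_\phi$ is finitely valid, so $\HER(\chi_\phi)$ is the class of all finite structures and lies in $\P$. If $\phi$ has a finite model, let $\bS$ be a smallest one; then $\bA\mapsto\bA\uplus\bS$ with $U$ interpreted as $S$ is, exactly as in the proof of Theorem~\ref{thm:undecidability}, a polynomial-time many-one reduction from $\HER(\psi)$ to $\HER(\chi_\phi)$, so $\HER(\chi_\phi)$ is $\coNP$-complete and, under $\P\neq\NP$, not in $\P$. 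Thus $\HER(\chi_\phi)\in\P$ iff $\phi$ is finitely unsatisfiable, which is the required reduction. The ``in particular'' clause follows because finite satisfiability is classically undecidable already for the prefix classes $\forall\exists\forall$ (Kahr, Moore and Wang) and $\forall^3\exists$ (Sur\'anyi), hence for every prefix class containing one of these as a subword (pad with dummy quantifiers over variables not occurring in the matrix); dualizing, if $Q$ contains $\exists\forall\exists$ or $\exists^3\forall$ as a subword then finite satisfiability for $\bar Q$-sentences is undecidable, so the hypothesis applies.

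The step needing the most care is the second one. Since $(\neg\phi)_U$ already uses all $m$ quantifiers of $Q$, the $\psi$-gadget must be inserted without introducing a single new quantifier, and one must ensure that the $(\neg\phi)_U$-disjunct of $\chi_\phi$ is not satisfied for spurious reasons on the substructures $\bA\uplus\bS'$ ($\bS'\leq\bS$) occurring in the reduction from $\HER(\psi)$, whose $\tau$-reduct is, up to isolated vertices, the trivial $\tau$-structure. Handling this — by exploiting the disjointness of the signatures of $\phi$ and $\psi$ together with a suitable normalization of the Trakhtenbrot sentence $\phi$ — is the technical heart; the remainder is bookkeeping on top of the proof of Theorem~\ref{thm:undecidability}.
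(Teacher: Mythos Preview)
Your proposal follows essentially the same route as the paper: observe that undecidability of finite satisfiability for $\bar Q$ forces $Q$ to contain $\exists\exists\forall$ or $\exists\forall\exists$ as a subword, pick $\psi$ from Theorem~\ref{thm:Forb-TD} accordingly, and rerun the Trakhtenbrot reduction of Theorem~\ref{thm:undecidability} with $\chi=(\neg\phi)_U\lor\psi_{\neg U}$ while tracking the quantifier prefix. The paper's proof does exactly this, padding $\psi$ to have prefix $Q$ and then ``moving quantifiers to the front''.

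One point worth flagging: you are right to call the merging step the ``technical heart''. The paper dispatches it in a single line, and your proposed fix (share the $Q$-prefix and disjoin the matrices, relying on disjoint signatures) is the natural reading of that line. But note that the completely naive version---literally taking $Q\bar x\,(m_{(\neg\phi)_U}(\bar x)\lor m_{\psi_{\neg U}}(\bar x))$ with the relativization matrices of Observation~\ref{obs:FO-relative}---is vacuously valid: any universally quantified variable $x_j$ would have to satisfy both $U(x_j)$ (to falsify $m_{(\neg\phi)_U}$) and $\neg U(x_j)$ (to falsify $m_{\psi_{\neg U}}$). So whatever ``incorporating the matrix of $\psi$ as an extra disjunct'' means precisely, it cannot be this, and the guards on the $\psi$-side have to be chosen more carefully than a straight application of Observation~\ref{obs:FO-relative} to the padded $\psi$. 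Your last paragraph shows you are aware something extra is needed here; the paper's proof is equally terse at this point, so you are not behind it, but if you want a self-contained argument you will have to spell out a guarding of the $\psi$-disjunct that avoids the clash on the shared universal variables while still forcing the witnesses into the $\neg U$-part.
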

\begin{proof}
    Denote by $Q'$ the dual of $Q$. It is known that if $Q'$ is of the form $\exists^\ast\forall\exists^\ast$ or of the form
    $\forall^\ast \exists^\ast$, then the satisfiability problem is decidable for the fragment of FO with quantifier prefix $Q'$~\cite{Ackermann,BernaysSchoenfinkel}.
    Hence, if the finite satisfiability problem is undecidable for $Q'$, then $Q'$ contains $\exists\forall \exists$
    or $\forall\forall\exists$ as a subword. By Theorem~\ref{thm:Forb-TD}, we can choose an FO sentence $\psi$
    with quantifier prefix $Q$ such that $\HER(\psi)$ is $\coNP$-complete. Now,
    notice that in the proof of Theorem~\ref{thm:undecidability} one can choose
    $\chi:= (\lnot\phi)_U\lor \psi_{\lnot U}$
    to have quantifier prefix of the form $Q$:
     \begin{itemize}
         \item since the quantifier prefix of $\phi$ is dual to $Q$, the quantifier prefix of $\neg \phi$ is $Q$, 
         \item by assumption, $\psi$ has quantifier prefix $Q$ and $\HER(\psi)$ is coNP-complete,
         \item since $Q'$ contains an existential quantifier, $Q$ contains a universal quantifier, and so
         $(\lnot\phi)_U$ and  $\psi_{\lnot U}$ have the same quantifier prefixes as $\phi$ and $\psi$, respectively
         (Observation~\ref{obs:FO-relative}),
         \item finally, by moving quantifiers to the front, we can rewrite $\chi$ with quantifier prefix
         $Q$.
     \end{itemize}
     In this way, following the proof of Theorem~\ref{thm:undecidability} we conclude that for a given $\phi$
     the problem $\HER(\chi)$ is polynomial-time solvable if and only if $\phi$ has no finite models, and hence undecidable by assumption.
     The last statement of this corollary follows because the finite satisfiability problem for 
     the fragments of FO with quantifier prefix $\exists\forall\exists$ or $\exists^3\forall$ is undecidable
     (see, e.g.,~\cite[Theorem 3.0.1]{DecisionProblem}. 
\end{proof}

\section{Hereditary model checking and quantifier prefixes}
\label{sect:prefixes}

In this section we present the following dichotomy for quantifier prefixes: 
for every quantifier prefix $Q\in\{\exists,\forall\}^\ast$ either 
\begin{itemize}
    \item $\HER(\phi)$ is in $\PO$ for every first-order formula $\phi$ with quantifier prefix $Q$, or
    \item there exists a first-order formula $\phi$ with quantifier prefix $Q$ 
    such that $\HER(\phi)$  is coNP-complete.
\end{itemize}
Moreover, we show that in the former case, $\HER(\phi)$ is also contained in SNP. 

A relational signature $\tau$ is called \emph{monadic} if all relation symbols in $\tau$ are monadic.
It is easy to see that every problem in HerFO with a monadic signature is polynomial-time solvable.

\begin{proposition}\label{prop:HerFO-monadic}
    Let $\tau$ be a finite monadic relational signature. For every first-order formula 
    $\phi$ the class $\HER(\phi)$ is universally definable and hence in $\PO$. 
\end{proposition}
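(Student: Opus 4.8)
The plan is to show that for a monadic signature $\tau$, the class $\HER(\phi)$ is closed under substructures (which is immediate) and, more importantly, that it is *finitely* axiomatizable by a universal sentence — in fact, that membership is determined by a bounded "local" condition on the structure. The key observation is that over a monadic signature $\tau = \{U_1, \dots, U_m\}$, every element $a$ of a $\tau$-structure $\bA$ has one of at most $2^m$ possible *types* (the subset of indices $i$ with $a \in U_i^{\bA}$), and two elements of the same type are automatically related by an automorphism of the one-point structures they generate. Consequently, whether a substructure $\bB$ of $\bA$ satisfies $\phi$ depends only on, for each of the $2^m$ types, how many elements of that type $\bB$ contains (and really only on whether that count is $0, 1, 2, \dots$ up to some threshold governed by the quantifier rank $n$ of $\phi$).

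Concretely, I would argue as follows. Let $n$ be the number of variables of $\phi$. First I would show: if $\bB$ and $\bB'$ are $\tau$-structures such that for every type $t \in \{0,1\}^m$ the number of elements of type $t$ in $\bB$ and in $\bB'$ either agree or are both $\geq n$, then $\bB \equiv_n \bB'$, i.e. $\bB \models \phi \iff \bB' \models \phi$. This is a routine Ehrenfeucht–Fra\"iss\'e argument: in the $n$-round game Duplicator maintains a partial isomorphism by always answering with an unused element of the same type, which is possible precisely because of the "$\geq n$ or equal" hypothesis (at most $n$ elements are ever touched). Second, I would conclude that $\bA \in \HER(\phi)$ if and only if every substructure $\bB$ of $\bA$ with at most $n \cdot 2^m$ elements satisfies $\phi$: any larger substructure can be shrunk, type-class by type-class, down to $\leq n$ elements per type without changing whether $\phi$ holds, using the first step. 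Finally, this bounded-size certificate condition is expressible by a universal first-order sentence $\psi$ with $n \cdot 2^m$ variables — for each isomorphism type of a $\leq n\cdot 2^m$-element structure not modelling $\phi$, a conjunct forbidding it as a substructure — so $\HER(\phi) = \MOD(\psi)$ with $\psi$ universal, hence decidable in polynomial time by brute-force over all tuples of bounded length.

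The only mildly delicate point, and the one I would state carefully, is the Ehrenfeucht–Fra\"iss\'e step: one must check that the threshold $n$ (number of variables, or quantifier rank) really does suffice, i.e. that Duplicator never runs out of fresh same-type elements within $n$ rounds. Since each round consumes at most one element per side and there are at most $n$ rounds, having either equality of counts or both counts $\geq n$ is exactly enough; there is no hidden obstacle here, just bookkeeping. Everything else — closure under substructures, the reduction to bounded-size substructures, and writing down the universal sentence — is immediate. I would therefore expect the write-up to be short, with the EF argument being the one place to be explicit.
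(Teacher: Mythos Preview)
Your proof is correct and follows the same overall strategy as the paper: reduce hereditary satisfaction of $\phi$ to checking all substructures of a fixed bounded size, and then express that bounded-size condition as a universal sentence.

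There is, however, a meaningful difference. The paper claims the bound $2^{|\tau|}$ suffices, arguing that if $\bC \subseteq \bB$ contains one representative per realized monadic type then ``$\bB \models \phi \iff \bC \models \phi$''. As stated this is false: with $\tau = \{U\}$ and $\phi$ the sentence ``there are at most two elements satisfying $U$'', a three-element structure with every element in $U$ fails $\phi$, while its one-element representative substructure satisfies it; so the three-element structure is not in $\HER(\phi)$ even though all of its substructures of size $\leq 2^{|\tau|} = 2$ are. Your bound $n \cdot 2^m$, which incorporates the number $n$ of variables of $\phi$, is the correct one, and the Ehrenfeucht--Fra\"iss\'e argument you outline (keep at most $n$ elements per type; Duplicator matches types round by round) is precisely what is needed to justify it. In short, your approach is the paper's approach done carefully, and it actually repairs a gap in the paper's proof.
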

\begin{proof}
    We claim that $\bA$ hereditarily models $\phi$ if and only if every substructure 
    $\bB$ of $\bA$ with at most $2^{|\tau|}$ many elements satisfies $\phi$. One direction follows from
    definition of hereditary satisfiability. For the converse implication, let $\bB$ be a
    substructure of $\bA$ and consider a minimal subset $C\subseteq B$ such that
    for every $b\in B$ there is a $c\in C$ such that $U(b) \Leftrightarrow U(c)$
    for all $U\in \tau$. Clearly, $|C|\le 2^{|\tau|}$ and it is straightforward to observe
    that $\bB\models \phi$ if and only if $\bC\models \phi$, and the claim follows.
\end{proof}
 
From now on, we only consider the non-monadic case. The key components in the
proof of our classification (Theorem~\ref{thm:Q-classification}) are
Algorithm~\ref{alg:main2} (for one of the tractable cases) and the fact that 
the problem of deciding whether every directed cycle in an input digraph $\bD$ induces a symmetric
edge is coNP-complete (Theorem~\ref{thm:hard-cycles}) and expressible in HerFO (Lemma~\ref{lem:EEU}).

\subsection*{The $\mathbf{\forall^\ast\exists^\ast}$ fragment}

In this subsection we prove that for every $\forall^\ast\exists^\ast$-formula
$\phi$ there is a universal formula $\phi'$ such that a structure $\bA$
hereditarily satisfies $\phi$ if and only if $\bA\models \phi'$. 

\begin{lemma}\label{lem:collapse}
    Let $\phi$ be a $\forall^\ast\exists^\ast$-formula with $k$ universally quantified variables. 
    Then a structure $\bA$
    hereditarily models $\phi$ if and only if every $k$-element substructure of $\bA$
    models $\phi$. 
\end{lemma}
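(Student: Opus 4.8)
The plan is to prove the equivalence by a straightforward "substructure closure" argument, using the fact that the witnesses for the existentially quantified variables can be absorbed into the same $k$-element set we are testing.

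\paragraph{Approach.}
Write $\phi = \forall x_1\cdots\forall x_k\,\exists y_1\cdots\exists y_m\,\psi(\overline x,\overline y)$ with $\psi$ quantifier-free. The forward direction is immediate: if $\bA$ hereditarily models $\phi$, then in particular every substructure of $\bA$ models $\phi$, so every $k$-element substructure of $\bA$ models $\phi$. The real content is the converse. So I would assume that every $k$-element substructure of $\bA$ models $\phi$ and show that an arbitrary substructure $\bB$ of $\bA$ models $\phi$.

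\paragraph{Key steps.}
First I would fix $\bB \leq \bA$ and an arbitrary tuple $\overline b = (b_1,\dots,b_k) \in B^k$; the goal is to find witnesses $\overline c = (c_1,\dots,c_m) \in B^m$ with $\bB \models \psi(\overline b,\overline c)$. Next, consider the substructure $\bB_0$ of $\bB$ (equivalently of $\bA$) induced on the set $D := \{b_1,\dots,b_k\}$. This set has at most $k$ elements; if it has fewer, pad it to exactly $k$ elements by repeating some $b_i$ (or, if $|B| < k$, we are in a degenerate situation I address below). Then $\bB_0$ is a $k$-element substructure of $\bA$, so by hypothesis $\bB_0 \models \phi$. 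Instantiating the universal quantifiers of $\phi$ at $\overline b$ (all of whose entries lie in $B_0 = D$), there exist witnesses $c_1,\dots,c_m \in D \subseteq B$ with $\bB_0 \models \psi(\overline b, \overline c)$. The final step is to observe that $\bB_0$ is an induced substructure of $\bB$ and $\psi$ is quantifier-free, so satisfaction of $\psi$ on a fixed tuple is inherited upward: $\bB \models \psi(\overline b,\overline c)$. Since $\overline b$ was arbitrary, $\bB \models \phi$, and since $\bB$ was arbitrary, $\bA$ hereditarily models $\phi$. (One should also double-check the edge case where the true domain $B$ has fewer than $k$ elements; then $\bB$ itself is a substructure of size $< k$, and one can test it directly after padding it up to a $k$-element substructure of $\bA$, or simply note that $\phi$ is preserved under the relevant small-substructure hypothesis by a symmetric padding argument.)

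\paragraph{Main obstacle.}
There is no deep obstacle here; the argument is elementary. The one point requiring care is the padding/degenerate-domain bookkeeping: making precise that "$k$-element substructure" hypotheses can be applied when the naturally arising set $D$ has fewer than $k$ elements, and checking that padding a tuple by repeating an element does not affect the quantifier-free evaluation (it does not, since $\psi$ only sees the induced structure on the set of distinct values). The essential insight — that in a $\forall^\ast\exists^\ast$-sentence the existential witnesses may always be taken among the finitely many universally chosen elements when we are free to restrict to the substructure they induce — is what makes the bound $k$ (rather than $k+m$) correct, and I would state this explicitly.
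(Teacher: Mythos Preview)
Your proposal is correct and follows essentially the same argument as the paper's proof: fix a substructure, fix a $k$-tuple, pass to the substructure induced on its entries, use the hypothesis there to find existential witnesses among those entries, and transfer the quantifier-free satisfaction upward. In fact, you are slightly more careful than the paper about the padding issue; the paper quietly replaces ``$k$-element'' by ``at most $k$ elements'' in the proof, whereas you flag this explicitly.
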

\begin{proof}
    We prove the non-trivial (but straightforward) implication. Suppose that every
    substructure $\bB$ of $\bA$ with $|B|\le k$ models $\phi$, and let $\bA'$
    be a substructure of $\bA$. If $|A'|\leq k$, then $\bA'\models \phi$; otherwise,
    for a $k$-tuple $(a_1,\dots, a_k) \in (A')^k$, let $\overline b$ be a
     tuple such that the quantifier-free part $\psi$ of $\phi$ is true
    of $(a_1,\dots, a_k,\bar{b})$ in the substructure of $\bA'$ with  vertex set 
    $\{a_1,\dots, a_k\}$. It follows that $\bA'\models \psi(a_1,\dots, a_k,\bar{b})$,
    and since such a $\bar{b}$ exists for every $\bar{a}\in (A')^k$, we conclude
    that $\bA'\models \phi$, and therefore $\bA\in \HER(\phi)$.
\end{proof}

\begin{corollary}\label{cor:U*E*}
    If $\phi$ is a $\forall^\ast \exists^\ast$-sentence, then $\HER(\phi)$ 
    is universally definable and hence polynomial-time solvable. 
\end{corollary}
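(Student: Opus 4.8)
The plan is to derive the corollary directly from Lemma~\ref{lem:collapse}. Write $\phi$ in prenex form as $\forall x_1\cdots x_k\,\exists z_1\cdots z_m\,\psi(x_1,\dots,x_k,z_1,\dots,z_m)$ with $\psi$ quantifier-free, so that $k$ is the number of universally quantified variables. We may assume $k\ge 1$: if $\phi$ is purely existential, replace it by the sentence $\forall x_0.\,\phi$ for a fresh variable $x_0$, which is still in $\forall^\ast\exists^\ast$ form, is equivalent to $\phi$ over all (non-empty) structures, and hence has the same $\HER$-class. By Lemma~\ref{lem:collapse}, a finite structure $\bA$ belongs to $\HER(\phi)$ if and only if every substructure of $\bA$ on at most $k$ elements satisfies $\phi$; it remains to turn this condition into a single universal sentence.

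First I would make precise when the substructure of $\bA$ induced on the entries of a tuple $(a_1,\dots,a_k)\in A^k$ satisfies $\phi$, allowing repetitions among the $a_i$, so that this substructure has at most $k$ elements. In that substructure each universally quantified variable of $\phi$ ranges over $\{a_1,\dots,a_k\}$ and each existentially quantified variable must be witnessed inside $\{a_1,\dots,a_k\}$, so ``$\{a_1,\dots,a_k\}$ induces a model of $\phi$'' is equivalent to the formula
\[
\psi'(x_1,\dots,x_k)\ :=\ \bigwedge_{f\colon[k]\to[k]}\ \bigvee_{g\colon[m]\to[k]}\ \psi\bigl(x_{f(1)},\dots,x_{f(k)},x_{g(1)},\dots,x_{g(m)}\bigr)
\]
evaluated at $(a_1,\dots,a_k)$, where $f$ records a choice of values in $\{a_1,\dots,a_k\}$ for the universal variables and $g$ a choice of witnesses for the existential variables. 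Note that $\psi'$ is quantifier-free, because both the conjunction and the disjunction are finite. Then $\phi':=\forall x_1\cdots x_k\,\psi'(x_1,\dots,x_k)$ is a universal sentence, and combining the previous paragraph with Lemma~\ref{lem:collapse} yields that a finite structure $\bA$ lies in $\HER(\phi)$ if and only if $\bA\models\phi'$. Hence $\HER(\phi)$ is universally definable.

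Polynomial-time solvability is then immediate: to test whether a finite structure $\bA$ with $|A|=n$ models the fixed universal sentence $\phi'$, one iterates over all $n^k$ tuples $(a_1,\dots,a_k)\in A^k$ and evaluates the fixed quantifier-free formula $\psi'$ on each, which takes time $O(n^k)$ up to a factor depending only on $\phi$. I do not anticipate a genuine obstacle: the only step requiring a little care is the syntactic translation that replaces the relativized existential block of $\phi$ by the finite disjunction over the maps $g$ and the relativized universal block by the finite conjunction over the maps $f$; everything else is bookkeeping on top of Lemma~\ref{lem:collapse}.
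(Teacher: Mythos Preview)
Your proof is correct and follows essentially the same route as the paper's: both invoke Lemma~\ref{lem:collapse} and then replace the existential block by a finite disjunction over substitutions of the existential variables by the universally quantified ones. The paper's formula omits your outer conjunction over $f$, which is in fact redundant since the prefix $\forall x_1\cdots\forall x_k$ already ranges over every tuple of the form $(x_{f(1)},\dots,x_{f(k)})$; conversely, your explicit handling of the degenerate case $k=0$ is a small point the paper glosses over.
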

\begin{proof}
    If $\phi = \forall x_1,\dots,x_k \exists y_1,\dots,y_l. \, \psi$ where $\psi$ is quantifier-free, 
    let $\phi'$ be the formula
    $$\forall x_1,\dots,x_k \exists y_1 \in \{x_1,\dots,x_k\}, \dots, y_l \in \{x_1,\dots,x_k\} . \, \psi.$$ 
    By Lemma~\ref{lem:collapse}, a structure hereditarily satisfies
    $\phi$ if and only if it satisfies $\phi'$. 
\end{proof}

\subsection*{The $\mathbf{\forall^\ast\exists\forall^\ast}$ fragment}
An \emph{SNP $\tau$-sentence} (short for \emph{strict non-deterministic polynomial-time})
is a sentence of the form 
$$ \exists R_1,\dots,R_k \forall x_1,\dots,x_n. \psi$$
where $\psi$ is a quantifier free $\tau \cup \{R_1,\dots,R_k\}$-formula. 
If a structure $\bA$ satisfies the sentence $\Psi$, we write $\bA \models \Psi$.
We say that a class of finite $\tau$-structures $\mathcal C$ is in \emph{SNP} if
there exists an SNP $\tau$-sentence $\Phi$ such that  $\bA \models \Phi$ if and only
if $\bA \in {\mathcal C}$.  We show that if $\phi$ is a $\forall^\ast\exists\forall^\ast$-formula, 
then $\HER(\phi)$ is in SNP $\cap$ P.

For this subsection we consider a fixed $\forall^\ast\exists\forall^\ast$-formula
\[
\phi=\forall x_1,\dots, x_k \exists y \forall x_{k+1},\dots, x_n. ~\psi(x_1,\dots, x_k,y,x_{k+1},\dots, x_n)
\]
where $\psi$ is a quantifier-free $\tau$-formula. We expand $\tau$ with an $(l+2)$-ary relation symbol
$L$. We will interpret $L$ as a reflexive linear order with $l$ parameters. It is straightforward
to observe that there is a universal $\{L\}$-formula $\Lin(x_1,\dots, x_l)$  such that
$\Lin$ is true of an $l$-tuple $\overline a$ in an $\{L\}$-structure $\bA$ if and only if
the binary relation $L({\overline a}, x,y)$ defines a reflexive linear order $x\le_{\overline a} y$
on $A$.
Consider now the SNP sentence defined as follows.
\begin{align}
\Phi:= \exists L \; \forall x_1,\dots, x_k, & y, x_{k+1},\dots, x_n. ~\Lin(x_1,\dots, x_k) \nonumber\\
   \wedge & \left (\bigwedge_{i\in [n]} L(x_1,\dots, x_k,y,x_i)\right) \Rightarrow \psi(x_1,\dots, x_k,y,x_{k+1},\dots, x_n).\nonumber
\end{align}

\begin{lemma}\label{lem:U*EU*}
    A finite $\tau$-structure $\bA$ hereditarily models $\phi$ if and only if it  models $\Phi$.
\end{lemma}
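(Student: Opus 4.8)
The plan is to prove both directions of the equivalence by unwinding the definitions of hereditary satisfaction and of the SNP sentence $\Phi$.

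\medskip

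\noindent\textbf{Forward direction ($\bA \in \HER(\phi) \Rightarrow \bA \models \Phi$).} Suppose every substructure of $\bA$ satisfies $\phi$. I need to produce an interpretation of $L$ witnessing $\Phi$. First I would fix an arbitrary reflexive linear order $\le$ on $A$ (ignoring the parameters, i.e.\ let $L(\overline{a},x,y)$ hold iff $x \le y$ for all $\overline{a}$); this immediately makes $\Lin(x_1,\dots,x_k)$ true for every tuple. Now take any tuple $(a_1,\dots,a_k)$ and let $\bB$ be the substructure of $\bA$ whose domain is the \emph{down-set} $\{x \in A : x \le \max(a_1,\dots,a_k)\}$, i.e.\ the set of elements below the maximum of $a_1,\dots,a_k$ in $\le$. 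Since $\bB$ is a substructure of $\bA$, it satisfies $\phi$, so applying $\phi$ to the tuple $(a_1,\dots,a_k) \in B^k$ yields an element $y^\ast \in B$ such that $\psi(a_1,\dots,a_k,y^\ast,\overline{x})$ holds for all $\overline{x} \in B^{n-k}$ (evaluated in $\bB$, but $\psi$ is quantifier-free, so it is evaluated identically in $\bA$). By construction $y^\ast \le \max(a_1,\dots,a_k)$, so $L(a_1,\dots,a_k,y^\ast,a_i)$ holds for $i\in[k]$; and for the inner universally quantified $x_{k+1},\dots,x_n$, whenever $L(a_1,\dots,a_k,y^\ast,x_i)$ holds for all $i$, in particular $x_i \le y^\ast \le \max(a_1,\dots,a_k)$, so $x_i \in B$, and therefore $\psi$ holds. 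This shows $\bA$ together with this $L$ satisfies the body of $\Phi$ for the witness $y = y^\ast$; since the tuple was arbitrary, $\bA \models \Phi$.

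\medskip

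\noindent\textbf{Backward direction ($\bA \models \Phi \Rightarrow \bA \in \HER(\phi)$).} Suppose $\bA \models \Phi$, witnessed by some interpretation of $L$. Let $\bA'$ be an arbitrary substructure of $\bA$, with domain $A'$. I must show $\bA' \models \phi$. Fix a tuple $(a_1,\dots,a_k) \in (A')^k$. Since $\bA\models\Phi$, the relation $L(a_1,\dots,a_k,\cdot,\cdot)$ is a reflexive linear order $\le_{\overline{a}}$ on the whole of $A$; restrict it to $A'$. Among the elements of $A'$ that lie $\le_{\overline{a}}$-above all of $a_1,\dots,a_k$ --- note this set is nonempty, as it contains the $\le_{\overline a}$-maximum of $\{a_1,\dots,a_k\}$ --- pick the $\le_{\overline{a}}$-minimal one; call it $y^\ast$. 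Now invoke the body of $\Phi$ in $\bA$ with this choice of $y = y^\ast$: for the inner variables, I only need $\psi(a_1,\dots,a_k,y^\ast,\overline{x})$ to hold for $\overline{x} \in (A')^{n-k}$. If some $x_i \in A'$ satisfies $L(a_1,\dots,a_k,y^\ast,x_i)$, i.e.\ $x_i \le_{\overline a} y^\ast$, then $x_i$ cannot lie strictly above all of $a_1,\dots,a_k$ (else $y^\ast$ was not minimal) --- wait, I need to be slightly more careful here, since the guard in $\Phi$ requires $L(a_1,\dots,a_k,y^\ast,x_i)$ for \emph{all} $i\in[n]$, including $i\in[k]$, i.e.\ $x_j \le_{\overline a} y^\ast$ for $j\in[k]$, which holds by choice of $y^\ast$, and $x_i \le_{\overline a} y^\ast$ for the inner variables. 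So the correct argument is: whenever $\overline{x}\in(A')^{n-k}$ makes the guard true, $\psi$ holds by $\Phi$; and when the guard is false for some inner $x_i$ --- say $x_i >_{\overline a} y^\ast$ --- I claim I can still conclude, because I should have chosen $y^\ast$ differently. Here is the fix: instead take $y^\ast$ to be $\le_{\overline a}$-\emph{maximum} of $A'$ itself. Then every $x_i\in A'$ satisfies $x_i \le_{\overline a} y^\ast$, so the guard is always satisfied within $A'$, and $\Phi$ forces $\psi(a_1,\dots,a_k,y^\ast,\overline{x})$ for all $\overline{x}\in(A')^{n-k}$; moreover $y^\ast\in A'$, so it is a valid witness for the existential in $\phi$ relativized to $\bA'$. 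Since $(a_1,\dots,a_k)$ was arbitrary, $\bA' \models \phi$.

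\medskip

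\noindent\textbf{Main obstacle.} The delicate point, which my scratch work above already flagged, is choosing the right witness $y^\ast$ so that the guard $\bigwedge_i L(x_1,\dots,x_k,y,x_i)$ in $\Phi$ is satisfied for exactly the inner tuples $\overline x$ one needs to control, namely all tuples over the substructure's domain $A'$. Taking $y^\ast$ to be the $\le_{\overline a}$-maximum of $A'$ resolves this cleanly in the backward direction; dually, in the forward direction, restricting to the down-set below $\max(a_1,\dots,a_k)$ is what makes the guard-implication vacuously strong enough. One should double-check the edge cases $k=0$ (the prefix is $\exists\forall^\ast$, $\Lin$ is a nullary assertion and $L$ is parameter-free) and $n=k+1$ (no inner universal variables, in which case the guard only constrains the $a_j$'s themselves), but both are handled uniformly by the same argument since $\psi$ is quantifier-free and the linear order is total on $A$.
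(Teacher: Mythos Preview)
There are two genuine gaps.

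\textbf{Orientation of the guard.} The atom $L(x_1,\dots,x_k,y,x_i)$ encodes $y \le_{\overline x} x_i$, not $x_i \le_{\overline x} y$. You read it the wrong way round in both directions. In the backward direction this means your ``fix'' (take $y^\ast$ to be the $\le_{\overline a}$-maximum of $A'$) is exactly backwards: with $y^\ast$ maximal, $y^\ast \le_{\overline a} a_i$ fails for every $a_i \ne y^\ast$, so the guard is almost never satisfied and you get no information from $\Phi$. Taking $y^\ast$ to be the $\le_{\overline a}$-\emph{minimum} of $A'$ does work, and is essentially the paper's argument for that direction.

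\textbf{The forward direction is fundamentally wrong.} In $\Phi$, the variable $y$ is \emph{universally} quantified, not existentially; the first-order part of $\Phi$ is a single block of universals. So you cannot ``produce a witness $y = y^\ast$''; you must show that \emph{every} $y$ satisfying the guard makes $\psi$ true. Your parameter-independent linear order $\le$ cannot achieve this: if $m$ is the global $\le$-minimum of $A$, then $m \le x_i$ holds for all $x_i$, so the guard is satisfied with $y = m$ for every choice of $a_1,\dots,a_n$, and $\Phi$ would force $\psi(a_1,\dots,a_k,m,a_{k+1},\dots,a_n)$ for \emph{all} tuples --- i.e., $m$ would have to be a single uniform witness for the existential in $\phi$, independent of $a_1,\dots,a_k$. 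Hereditary satisfaction of $\phi$ does not imply anything of the sort. The paper's construction is the missing idea: the order $\le_{\overline a}$ must genuinely depend on $\overline a$, and is built by repeatedly applying $\phi$ to shrinking substructures --- take $b_1$ a witness for $\overline a$ in $\bA$, then $b_2$ a witness in $\bA \setminus \{b_1\}$, etc.\ --- so that whenever $y = b_l$ satisfies the guard, all the $x_i$ lie in $A \setminus \{b_1,\dots,b_{l-1}\}$ and $\psi$ holds by the choice of $b_l$.
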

\begin{proof}
    For the easy direction, suppose that $(\bA,L)$ models the first-order part of $\Phi$. For all
    $a_1,\dots, a_k \in A$, let $b \in A$ be the minimum with respect to the linear order
    $\le_{a_1,\dots, a_k}$, i.e., the element $b \in A$ such that $L(a_1,\dots, a_k,b,c)$ for all
    $c\in A$. In particular,  for all $a_{k+1},\dots, a_n \in A$ and $i\in[n]$ the atomic formula
    $L(a_1,\dots, a_k,b, a_i)$ holds in $\bA$, and thus $\bA\models \psi(a_1,\dots, a_k, b, a_{k+1},\dots, a_n)$.
    Since $\Phi$ is an SNP sentence, every substructure $\bB$ of $\bA$ also models
    $\Phi$, and by the previous argument we conclude that $\bB$ models $\phi$. Hence, 
    $\bA$ hereditarily models $\phi$. 
    
    Conversely, suppose that $\bA$ hereditarily satisfies $\phi$. For every $k$-tuple
    ${\overline{a}} = (a_1,\dots, a_k)$ of $A$ we define a reflexive linear order $\le_{\overline a}$
    such that the expansion $$(\bA,\{(a_1,\dots, a_k,b,c)\colon b \le_{(a_1,\dots, a_k)} c\})$$ 
    models the first-order part of $\Phi$. Let $b_1\in A$ be any element witnessing that 
    $\bA$ satisfies $$\exists y\forall x_{k+1},\dots, x_n.\psi(\overline a, y, x_{k+1},\dots, x_n).$$
    For $l > 1$, if no $b_i$ is a coordinate of  $\overline a$ for $i< l$, 
    choose $b_l$ to be any element witnessing that the substructure of $\bA$
    with domain $A\setminus\{b_1,\dots, b_{l-1}\}$ satisfies 
    $\exists y\forall x_{k+1},\dots, x_n.\psi(\overline a, y, x_{k+1},\dots, x_n)$ 
    (such a vertex $b_l$ exists since $\bA$ hereditarily satisfies $\phi$).  Otherwise, if some
    $b_i$ equals some coordinate
    of ${\overline a}$, then let $b_l$ be an arbitrary element of $A\setminus\{b_1,\dots, b_{l-1}\}$.
    We define the linear ordering $b_i \le_{\overline a} b_j$ if and only if $i\le j$, and let
    $L:=\{(\overline a, b,c)\in A^{k+2}\colon b \le_{\overline a} c\}$.
    It follows from the definition of $L$ and of $\Lin$ that $(\bA,L)\models
    \forall a_1,\dots, a_k. \Lin(a_1,\dots, a_k)$. 
    
    Suppose that $(\bA,L)$ satisfies $\bigwedge_{i\in [n]} L(a_1,\dots, a_k,b,a_i)$
    for some $a_1,\dots, a_k, b, a_{k+1},\dots, a_n\in A$. Let $b_1,\dots, b_m$ be the enumeration
    of $A$ corresponding to the linear ordering $\le_{(a_1,\dots, a_k)}$, and suppose that $b = b_l$.
    Since $(\bA,L)\models\bigwedge_{i\in [n]} L(a_1,\dots, a_k,b,a_i)$, i.e., $b\le_{(a_1,\dots, a_k)}
    a_i$ for every $i\in[n]$, it must be
    the case that every $a_i$ belongs to $A\setminus \{b_1,\dots, b_{l-1}\}$. It thus follows from the definition of $b_l$
    that the substructure $\bB$ of $\bA$ with vertex set $A\setminus \{b_1,\dots, b_{l-1}\}$ models 
    $\psi(a_1,\dots, a_k,b_l,a_{k+1},\dots, a_n)$, and thus $(\bA,L)\models \psi(a_1,\dots,
    a_k,b_l,a_{k+1},\dots, a_n)$.
    This shows that $\bA$ satisfies $\Phi$.
\end{proof}

The proof of Lemma~\ref{lem:U*EU*} suggests  a polynomial-time algorithm
that on input structure $\bA$ finds  parameterized linear orderings
proving that $\bA$ hereditarily satisfies $\phi$, or finds a substructure $\bB$ of $\bA$
that does not model $\phi$ (which certifies that $\bA$ does not hereditarily satisfy $\phi$).

\medskip 
\noindent\textbf{Certifying polynomial-time algorithm.}
Consider a fixed first-order sentence 
\[
\phi:= \forall x_1,\dots, x_k\exists y. \phi'(x_1,\dots, x_k, y)
\]
where $\phi'$ is a universal formula.

\RestyleAlgo{ruled}
\SetAlgoVlined{}
\begin{algorithm}
\DontPrintSemicolon{}
\SetKwInOut{Input}{input}
\Input{a finite $\tau$-structure $\bA$ with domain $A = \{a_1,\dots, a_m\}$.}
\caption{Cert-Her-$\forall^*\exists\forall^*$}

\ForEach{$\bar a \in A^k$}{
$S := \varnothing$ and $\le \; := \varnothing$. \\
\Repeat{$s$ equals some coordinate of ${\overline a}$}{
Let $\bA'$ be the substructure of $\bA$ with domain $A' := A \setminus S$. \\
\If{$\bA'$ satisfies $\forall y. \neg \phi'(\bar a,y)$}{Return $\bA'$}
 Let $s \in A'$ be such that $\bA'$ satisfies
    $\phi'({\overline a},s)$. \\
    Update  $S:= S \cup\{s\}$ and $\le \; := \; \le \cup \, \{(x,s)\mid x \in S\}$.
    }
    Let $\leq_{\bar a} \; := \; \leq~\cup~S\times (A\setminus S)~\cup~\{(a_i,a_j) \in (A\setminus S)^2\mid  i \le j\}.$   
}
Let $L := \{(\overline a, b,c)\in A^{k+2}\colon \overline a\in A^k, b \le_{\bar a} c\}$.\\
Return $(\bA, L)$
\label{alg:main2}
\end{algorithm}

\begin{lemma}\label{lem:algorithm-main}
    For every first-order formula $\phi:= \forall x_1,\dots, x_k\exists y.\phi'(x_1,\dots, x_k, y)$
    where $\phi'$ is a universal formula $\forall x_{i+1},\dots,x_n. \psi$ for some quantifier-free formula
    $\psi$, and for every $\tau$-structure $\bA$ with domain $A = \{a_1,\dots, a_m\}$ the following
    statements hold. 
    \begin{itemize}
        \item If Algorithm~\ref{alg:main2} returns a substructure $\bA'$ of $\bA$, then
        $\bA'\models \lnot \phi$, and $\bA\not\in \HER(\phi)$.
        \item If Algorithm~\ref{alg:main2} returns an expansion $(\bA,L)$ of $\bA$, 
        then $(\bA,L)$ satisfies the first-order part of $\Phi$, and $\bA\in \HER(\phi)$.
    \end{itemize}
\end{lemma}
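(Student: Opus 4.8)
The plan is to verify that Algorithm~\ref{alg:main2} correctly implements the construction from the proof of Lemma~\ref{lem:U*EU*}, arguing separately about the two kinds of output. The algorithm simply loops over all $k$-tuples $\overline a$, and for each one builds a linear order $\le_{\overline a}$ by greedily peeling off witnesses $s$ for the existential variable $y$ exactly as in the second half of the proof of Lemma~\ref{lem:U*EU*}; the set $S$ plays the role of $\{b_1,\dots,b_{l-1}\}$, and the \textbf{Repeat} loop stops precisely when the newly chosen witness coincides with a coordinate of $\overline a$ (after which the remaining elements are appended in the fixed enumeration order), matching the case distinction in that proof.

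For the first itemized statement, I would observe that the only way the algorithm returns a substructure is via the \textbf{If} branch: at that point $\bA'$ is the substructure of $\bA$ with domain $A\setminus S$, and $\bA'$ satisfies $\forall y.\lnot\phi'(\overline a, y)$ for the current tuple $\overline a$, hence $\bA'\not\models \phi$ (take this tuple $\overline a$, restricted to $A'$ if necessary — but note all coordinates of $\overline a$ still lie in $A'$, since the loop terminates as soon as a chosen witness equals a coordinate of $\overline a$, and so no coordinate of $\overline a$ is ever put into $S$). Since $\phi$ fails in the substructure $\bA'$, we get $\bA\notin \HER(\phi)$ directly from the definition of hereditary satisfaction.

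For the second itemized statement, I would argue that if the \textbf{If} branch is never taken for any $\overline a$, then each inner loop terminates (the sets $S$ strictly grow and $A$ is finite, and once $S$ exhausts the elements that are not coordinates of $\overline a$ the next witness must be such a coordinate), so a genuine reflexive linear order $\le_{\overline a}$ is produced for every $\overline a$, and $L$ is well defined. Then $(\bA,L)\models \forall \overline x.\Lin(\overline x)$ by construction. To check the implication in the first-order part of $\Phi$: if $\bigwedge_{i\in[n]} L(\overline a, b, a_i)$ holds, then $b$ is $\le_{\overline a}$-below every $a_i$, so writing $b = b_l$ in the enumeration $b_1,\dots,b_m$ induced by $\le_{\overline a}$, each $a_i$ lies in $A\setminus\{b_1,\dots,b_{l-1}\}$; but $b_l$ was chosen (in the non-degenerate case) as a witness that the substructure on $A\setminus\{b_1,\dots,b_{l-1}\}$ satisfies $\phi'(\overline a, b_l)$, i.e.\ $\forall x_{i+1},\dots,x_n.\psi(\overline a, b_l, \cdot)$, which in particular gives $\psi(\overline a, b_l, a_{i+1},\dots,a_n)$; since $\psi$ is quantifier-free it is preserved when passing back up to $\bA$. (In the degenerate case where $b_l$ coincides with a coordinate of $\overline a$ the same conclusion holds because then $b_l \in A\setminus S$ for the final $S$ and the witness property of the last genuinely chosen element already covers the whole remaining domain.) Hence $(\bA,L)$ models the first-order part of $\Phi$, so $\bA\models\Phi$, and by Lemma~\ref{lem:U*EU*} we conclude $\bA\in\HER(\phi)$. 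The main subtlety to get right is the bookkeeping in the degenerate termination case — making sure that appending the remaining elements in the fixed order $a_i \le a_j \iff i \le j$ does not break the witness property needed for the implication, which is exactly the point handled by the ``Otherwise'' clause in the proof of Lemma~\ref{lem:U*EU*}.
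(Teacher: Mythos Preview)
Your approach matches the paper's: both verify the first bullet by noting that at the \textbf{If} branch all coordinates of $\bar a$ still lie in $A'$, and verify the second bullet by showing that whenever the antecedent $\bigwedge_i L(\bar a, c, a_i)$ holds, $c$ must have been chosen as a witness $s$ in some iteration of the loop, so $\phi'(\bar a, c)$ holds in the substructure on $A\setminus S_{i-1}$, which contains all the $a_i$.

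There is one confusion worth fixing. You write that ``no coordinate of $\bar a$ is ever put into $S$'', but the repeat--until loop updates $S := S \cup \{s\}$ \emph{before} testing whether $s$ is a coordinate, so the terminating witness $s_p$ is both a coordinate of $\bar a$ and an element of the final $S$. This does not harm your first bullet (at the \textbf{If} branch only previously chosen, non-coordinate elements are in $S$), but it makes your handling of the ``degenerate case'' in the second bullet wrong: you claim that if $b_l$ is a coordinate then $b_l \in A\setminus S$, which is false in general. The correct argument---and what the paper uses implicitly---is that the degenerate case $c\in A\setminus S$ is vacuous: since the terminating element $s_p\in S$ is one of the coordinates $a_1,\dots,a_k$, the hypothesis $c \le_{\bar a} a_i$ for all $i\in[n]$ gives $c \le_{\bar a} s_p$, and every element of $A\setminus S$ lies strictly above $s_p$, so $c\in S$. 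Your alternative justification (``the witness property of the last genuinely chosen element already covers the whole remaining domain'') does not work as stated, because $\phi'(\bar a, s_p)$ says nothing about $\phi'(\bar a, c)$ for $c\neq s_p$.
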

\begin{proof}
    To prove the first claim, notice that (by finite induction) at the \textbf{if} statement in the \textbf{repeat-until} loop
    of the algorithm, the set $A'$ contains all entries of $\bar a$.
    Hence, if the algorithm returns $\bA'$, then $\bA'$ does not satisfy
    $\exists y. \phi'(\bar a,y)$ and hence $\bA$ does not hereditarily satisfy $\phi$.
    Now we argue that the second itemized statement holds. It is straightforward to 
    observe that $\le_{\bar{a}}$ is a reflexive linear order for every $\bar{a}\in A^k$. Hence, 
    it follows from the definition of $L$ that $(\bA,L)\models \forall x_1,\dots, x_k.\Lin(x_1,\dots, x_k)$. To see that $(\bA,L)$ models the second conjunct of $\Phi$, let $\bar b \in A^k$, $c\in A$, and $b_{k+1},\dots, b_n\in A$ (so $(\bar{b}, c, 
    b_{k+1},\dots, b_n)$ is an evaluation of the universally quantified variables of $\Phi$ in $A$). Further, suppose that 
    \[
    (\bA, L)\models \bigwedge_{i\in [n]} L(b_1,\dots, b_k,c,b_i);
    \]
    otherwise the second conjunct in the definition of $\Phi$ is vacuously true for the tuple $(b_1,\dots, b_k,c$, $b_{k+1},\dots, b_n)$.
    By the definition of $L$, this means that $c \le_{\bar{b}} b_i$ for every $i\in [n]$. Hence, 
    there is some iteration of the \textbf{repeat-until} loop such that $c\in S$. Let $S_i$
    be the set $S$ and the end of this iteration $i$ of the loop, and $S_0:= \varnothing$.
    It follows from the definition of $\le_{\bar{b}}$ and the assumption that
    $c\le_{\bar{b}} b_j$
    for each $j\in[n]$, that if $S_i = S_{i-1}\cup \{c\}$,
    then $(\{b_1,\dots, b_n\}\cap S_{i-1}) = \varnothing$.
    Since $c$ was added to $S$ in the
    $i$-th iteration, it must be the case that in the $i$-th iteration the \textbf{if} statement
    is not true
    for $s:= c$, i.e., $\bA'\models \forall x_1,\dots, x_k.\phi'(x_1,\dots, x_k,c)$.   
    Since $\{b_1,\dots, b_n\}\subseteq A\setminus S_{i-1}$, we conclude that in particular
    $\bA'\models \phi'({\bar{b},c})$ where the universally quantified variables from $\phi'$
    are interpreted as $(b_{k+1},\dots, b_n)$. This means that 
    \[
    (\bA,L)\models  
    \left (\bigwedge_{i\in [n]} L(b_1,\dots, b_k,c,b_i)\right) \Rightarrow \psi(b_1,\dots, b_k,c,b_{k+1},\dots, b_n).
    \]
    Since this is true for any choice of elements $b_1,\dots, b_n$ and $c$ in $A$, we conclude that
    $(\bA,L)$ satisfies the first-order part of $\Phi$, and by Lemma~\ref{lem:U*EU*}
    we conclude that $\bA\in \HER(\phi)$.
\end{proof}

Clearly, Algorithm~\ref{alg:main2} runs in polynomial time in the representation size of $\bA$.
Hence, the following statement is an immediate consequence of Lemma~\ref{lem:algorithm-main}.

\begin{theorem}\label{thm:U*EU*}
    If $\phi$ is a $\forall^\ast \exists \forall^\ast$-sentence, then there is an 
    $\SNP$ sentence $\Phi$ such that a finite structure $\bA$ satisfies $\Phi$ if and only
    if $\bA\in \HER(\phi)$. Moreover, $\Phi$ can be efficiently computed from $\phi$, and
    there is a polynomial-time algorithm that either finds an expansion of $\bA$ proving
    that $\bA\models \Phi$, or finds a substructure $\bA'$ of $\bA$ such that
    $\bA\models \lnot \phi$.
\end{theorem}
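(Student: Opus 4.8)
The plan is to observe that Theorem~\ref{thm:U*EU*} is essentially a restatement of results already assembled in the preceding development, so the ``proof'' mostly consists of quoting Lemma~\ref{lem:U*EU*} and Lemma~\ref{lem:algorithm-main} and checking that Algorithm~\ref{alg:main2} runs in polynomial time. First I would recall that any $\forall^\ast\exists\forall^\ast$-sentence $\phi$ can be written as $\forall x_1,\dots,x_k\,\exists y.\,\phi'(x_1,\dots,x_k,y)$ with $\phi'$ universal, simply by grouping the leading block of universal quantifiers and treating $\exists y\forall x_{k+1}\cdots\forall x_n.\psi$ as $\exists y.\phi'$; this is the exact form required by the construction of $\Phi$ and by Algorithm~\ref{alg:main2}. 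Then I would point to Lemma~\ref{lem:U*EU*}, which already establishes that a finite $\tau$-structure $\bA$ hereditarily models $\phi$ if and only if $\bA\models\Phi$, where $\Phi$ is the displayed $\SNP$-sentence; and note that $\Phi$ is written down explicitly from the quantifier-free part $\psi$ of $\phi$ together with the fixed universal formula $\Lin$, so it is clearly computable from $\phi$ in an effective (indeed polynomial) way.

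Next I would address the algorithmic claim. By Lemma~\ref{lem:algorithm-main}, on any input structure $\bA$ Algorithm~\ref{alg:main2} either returns a substructure $\bA'$ with $\bA'\models\lnot\phi$ (certifying $\bA\notin\HER(\phi)$) or returns an expansion $(\bA,L)$ satisfying the first-order part of $\Phi$ (which by Lemma~\ref{lem:U*EU*} certifies $\bA\in\HER(\phi)$); so correctness of the decision procedure is already in hand. The only remaining point is the running time: the outer loop ranges over the $|A|^k$ many $k$-tuples $\bar a$ (a polynomial number, since $k$ is fixed), the inner \textbf{repeat-until} loop runs at most $|A|$ times since each iteration adds one element to $S$, and each iteration performs a fixed-template first-order model-checking test ($\bA'\models\forall y.\lnot\phi'(\bar a,y)$ and the search for a witness $s$), which is polynomial because $\phi'$ is a fixed formula. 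Hence the whole algorithm is polynomial in the representation size of $\bA$, and in particular $\HER(\phi)\in\PO$.

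There is essentially no obstacle here: all the substantive work — the equivalence with an $\SNP$ sentence, the correctness of the certifying algorithm, and the observation that $\le_{\bar a}$ really is a reflexive linear order — has been carried out in Lemmas~\ref{lem:U*EU*} and~\ref{lem:algorithm-main}. The one place that deserves a sentence of care is the reduction of a general $\forall^\ast\exists\forall^\ast$-prefix to the special shape $\forall^k\exists$ with a universal body, so that the hypotheses of those two lemmas and of Algorithm~\ref{alg:main2} literally apply; once that is noted, the theorem follows by direct citation. I would therefore keep the proof to a short paragraph: rewrite $\phi$ in the required form, invoke Lemma~\ref{lem:U*EU*} for the $\SNP$ characterisation and effective computability of $\Phi$, invoke Lemma~\ref{lem:algorithm-main} together with the polynomial time bound just sketched for the certifying algorithm, and conclude.
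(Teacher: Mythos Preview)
Your proposal is correct and matches the paper's own treatment: the paper simply remarks that Algorithm~\ref{alg:main2} runs in polynomial time and then states Theorem~\ref{thm:U*EU*} as an immediate consequence of Lemma~\ref{lem:algorithm-main} (with Lemma~\ref{lem:U*EU*} already providing the $\SNP$ equivalence). Your additional remarks about rewriting $\phi$ in the form $\forall^k\exists y.\phi'$ and the explicit polynomial-time bound are reasonable elaborations but not required beyond what the paper does.
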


Note that this theorem covers the first-order sentences that show that Example~\ref{ex:forests},
Example~\ref{ex:chordal}, 
Example~\ref{ex:DAG}, and Example~\ref{ex:and-or-scheduling}  
are in $\HerFO$, because they are $\exists \forall^*$ sentences. 
Hence, the polynomial-time tractability of each of these problems follows from Theorem~\ref{thm:U*EU*}. 

A further natural example of a $\forall\forall\exists \forall^\ast$-sentence is given in Example~\ref{ex:cover-relation}
(the class of digraphs that corresponds to the cover relation of some poset).

\begin{figure}[ht!]
\centering
\begin{tikzpicture}

  \begin{scope}[yshift = 3cm, xshift = -1cm, scale = 0.8]
    \node [vertex] (l1) at (-1.5,1) {};
    \node [vertex] (l2) at (-2.25,0) {};
    \node [vertex] (l3) at (-0.75,0) {};
    \node [vertex] (l4) at (-1.5,-1) {};
    
    \node [vertex, label = right:$a$] (a) at (1.5,1) {};
    \node [vertex, label = right:$b$] (b) at (2.25,0) {};
    \node [vertex] (1) at (0.75,0) {};
    \node [vertex] (2) at (0.75,-1) {};
    \node [vertex] (3) at (2.25,-1) {};
    \node (L1) at (0,-1.5) {$\bG$};
      
    \foreach \from/\to in {l1/l2, l1/l3, l2/l3, l2/l4, l3/l4,
    a/b, a/1, 1/2, 2/3, 3/b, b/1}     
    \draw [edge] (\from) to (\to);
  \end{scope}

  \begin{scope}[xshift = -4.5cm, scale = 0.6]
    \node [vertex, label = below:$l_1$] (l1) at (-4,0) {};
    \node [vertex, label = below:$l_2$] (l2) at (-3,0) {};
    \node [vertex, label = below:$l_3$] (l3) at (-2,0) {};
    \node [vertex, label = below:$l_4$] (l4) at (-1,0) {};
    
    \node [vertex, label = below:$a$] (a) at (0,0) {};
    \node [vertex, label = below:$b$] (b) at (1,0) {};
    \node [vertex] (1) at (2,0) {};
    \node [vertex] (2) at (3,0) {};
    \node [vertex] (3) at (4,0) {};
    \node (L1) at (0,-2) {$(\bG,\le_a)$};

    \foreach \from/\to in {l1/l2,  l2/l3, l3/l4,
    a/b,  1/2, 2/3, b/1} 
    \draw [edge] (\from) to (\to);
     \foreach \from/\to in {l1/l3, l2/l4, a/1, b/3} 
    \draw [edge] (\from) to [bend left] (\to);
  \end{scope}

  \begin{scope}[xshift = 4.5cm, scale = 0.8]
  \node [vertex, label = below:$l_1$] (l1) at (-4,0) {};
    \node [vertex, label = below:$l_2$] (l2) at (-3,0) {};
    \node [vertex, label = below:$l_3$] (l3) at (-2,0) {};
    \node [vertex, label = below:$l_4$] (l4) at (-1,0) {};
    \node [vertex, fill = black, label = right:$a$] (a) at (1.5,1) {};
    \node [vertex, fill = black, label = right:$b$] (b) at (2.25,0) {};
    \node [vertex, fill = black] (1) at (0.75,0) {};
    \node [vertex, fill = black] (2) at (0.75,-1) {};
    \node [vertex, fill = black] (3) at (2.25,-1) {};
    \node (L1) at (0,-2) {$(\bG, <)$ and $\mathbb H$};
      
    \foreach \from/\to in {l1/l2,  l2/l3, l3/l4,
    a/b, a/1, 1/2, 2/3, 3/b, b/1} 
    \draw [edge] (\from) to (\to);
     \foreach \from/\to in {l1/l3, l2/l4} 
    \draw [edge] (\from) to [bend left] (\to);
  \end{scope}
  
\end{tikzpicture}
\caption{
Consider a first-order $\{E\}$-sentence $\phi$ that states that $E$ is a symmetric relation, and that 
for every vertex $x$ there is a vertex $y$ not adjacent to $x$ such that the neighbourhood of $y$ induces 
a clique. Clearly, $\phi$ can be chosen to be an $\forall \exists \forall^\ast$ formula. On the top, 
we depict a graph $\bG$ with two distinguished vertices $a$ and $b$. At the bottom left, we depict the linear
order $\le_a$ (from left to right) of $\bG$ obtained via Algorithm~\ref{alg:main2} (where the linear order between
the vertices greater that $a$ can be arbitrary), and that proves that $(\bG,a)$ hereditarily 
satisfies $\phi(a,y,\bar{z})$. At the bottom right, we depict the partial linear order $<$ and the subgraph $\bH$ (depicted
with black vertices) of $\bG$ that Algorithm~\ref{alg:main2} finds when running the loop for $x = b$, and hence,
proving that $(\bG,b)$ does not hereditarily satisfy $\phi$.}
\label{fig:U*EU*}
\end{figure}

\subsection*{$\exists\forall\exists$- and $\exists^2\forall$-sentences}
Theorem~\ref{thm:Forb-TD} already provides examples of first-order
sentences $\phi$ and $\psi$ with quantifier prefix $\exists\forall\exists$
and $\exists^2\forall$ such that $\HER(\phi)$ and $\HER(\psi)$ are coNP-complete. 
Moreover, $\phi$ and $\psi$ are formulas with a binary signature (with two relation symbol).
In this section we provide first-order sentences with the same quantifier prefixes over the signature of digraphs
(they only use \emph{one} binary symbol), which is the final ingredient needed
to prove Theorem~\ref{thm:Q-classification}.

Consider the class of digraphs $\bD$ such that for every directed cycle
$d_1,\dots, d_n$ of $\bD$ there exist $i,j\in [n]$ such that
$d_id_j$ is a symmetric edge of $\bD$, i.e., $(d_i,d_j),(d_j,d_i)\in \bD$.
In this case, we say that every directed cycle of $\bD$ `induces a symmetric
edge'.

\begin{lemma}\label{lem:EEU}
    Every directed cycle of a finite digraph $\bD$ induces a symmetric
    edge if and only if $\bD$ hereditarily satisfies the sentence
    \begin{align}
    \exists x,y\forall a \big (\lnot E(x,a) \lor [E(x,y)\land E(y,x)] \big ).
    \label{eq:symedge}
    \end{align}
\end{lemma}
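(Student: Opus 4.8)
The plan is to first determine, for a \emph{single} digraph, exactly when it satisfies the displayed sentence --- call it $\chi$ --- and then to translate the hereditary version into a statement about directed cycles.

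\textbf{Step 1: Satisfaction of $\chi$ in a single digraph.} I would first check that a digraph $\bD'$ satisfies $\chi$ if and only if $\bD'$ has a \emph{sink}, i.e.\ a vertex $x$ with $\lnot E(x,a)$ for all $a$, or a \emph{symmetric edge}, i.e.\ vertices $x,y$ (not necessarily distinct) with $(x,y),(y,x)\in E^{\bD'}$. Indeed, once the two existentially quantified variables are fixed to $x,y$, the matrix $\forall a\,(\lnot E(x,a)\lor[E(x,y)\land E(y,x)])$ holds precisely when $E(x,y)\land E(y,x)$ is true --- making the second disjunct true for every $a$ --- or when $x$ has no out-neighbour, making the first disjunct true for every $a$. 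Note that taking $y=x$ shows that a loop at $x$ is a symmetric edge.

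\textbf{Step 2: Reformulating the hereditary condition.} By Step 1, $\bD$ hereditarily satisfies $\chi$ if and only if every (nonempty) induced subdigraph of $\bD$ has a sink or a symmetric edge; equivalently, by contraposition, $\bD$ \emph{fails} to hereditarily satisfy $\chi$ if and only if $\bD$ has a nonempty induced subdigraph $\bD'$ with neither a sink nor a symmetric edge. Thus the lemma follows once we establish the purely combinatorial equivalence: $\bD$ contains a directed cycle inducing no symmetric edge if and only if $\bD$ contains a nonempty induced subdigraph with no sink and no symmetric edge.

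\textbf{Step 3: The combinatorial equivalence, and conclusion.} For the forward direction, given a directed cycle $C=d_1,\dots,d_n$ that induces no symmetric edge (so in particular $n\ge 2$, since a one-vertex cycle is a loop hence a symmetric edge), I would let $\bD'$ be the subdigraph of $\bD$ induced on $\{d_1,\dots,d_n\}$: it has no sink, because along $C$ each $d_i$ has the out-neighbour $d_{i+1}$ (indices mod $n$), and it has no symmetric edge, because a symmetric edge of $\bD'$ would be a symmetric pair $d_id_j$ of $\bD$, contradicting the choice of $C$. For the converse, given such a $\bD'$, every vertex of $\bD'$ has out-degree at least $1$ in $\bD'$; since $\bD'$ is finite, following out-edges from an arbitrary vertex eventually revisits a vertex, producing a closed directed walk in $\bD'$ and hence a directed cycle $C$ all of whose vertices lie in $D'$ (the standard fact that a finite digraph of minimum out-degree at least $1$ has a directed cycle). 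Since $\bD'$ is induced and has no symmetric edge, $C$ induces no symmetric edge in $\bD$ either, and in particular $C$ is not a loop. Chaining the equivalences of Steps 2 and 3 proves the lemma. I do not anticipate a genuine obstacle; the only places calling for care are the exact parsing in Step 1 --- in particular that $x=y$ is permitted, so ``no symmetric edge'' also rules out loops --- and the invocation of the standard directed-cycle fact in Step 3.
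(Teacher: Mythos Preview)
Your proposal is correct and follows essentially the same approach as the paper: both arguments reduce the question to the observation that a digraph satisfies the sentence exactly when it has a sink or a symmetric edge, and then use the standard fact that a finite digraph with no sink contains a directed cycle. Your write-up is a bit more explicit in isolating Step~1 as a separate characterization and in treating the loop/$x=y$ case, but the substance is the same.
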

\begin{proof}
    Suppose there is a directed cycle $d_1,\dots, d_n$ of $\bD$
    that does not induce a symmetric edge. Then the substructure
    of $\bD$ with vertex set $\{d_1,\dots, d_n\}$ satisfies
    the formula $$\forall x,y\exists a \big (E(x,a) \land [\lnot E(x,y)\lor \lnot E(y,x)] \big ),$$
    and so $\bD$ does not hereditarily satisfy~\eqref{eq:symedge}.
    
    Conversely, suppose that every directed cycle of $\bD$ induces a symmetric
    edge and let $B\subseteq D$. If $\bB$ contains a sink $b$, then
    $\bB\models \forall a\lnot E(b,a)$, and so $\bB$ satisfies~\eqref{eq:symedge}. 
    Otherwise, $\bB$ contains a directed cycle, and by assumption this directed
    cycle induces a symmetric edge $uv$, so by letting $x = u$ and $y = v$
    we conclude that $\bB$ satisfies $\exists x,y (E(x,y)\land E(y,x))$. Again, $\bB$ satisfies~\eqref{eq:symedge}.  
\end{proof}

Now, we prove that deciding whether every directed cycle of an input digraph $\bD$ 
induces a symmetric edge is coNP-complete. 

\begin{theorem}\label{thm:hard-cycles}
    The problem of deciding if every directed cycle in an input digraph $\bD$ induces
    a symmetric edge is $\coNP$-complete. 
\end{theorem}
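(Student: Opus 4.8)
The plan is to reduce from a known NP-complete problem so that yes-instances of the source become digraphs in which every directed cycle induces a symmetric edge, and no-instances become digraphs with a ``bad'' directed cycle (one with no symmetric chord). Membership in coNP is clear: a witness for a no-instance is a directed cycle together with the verification that no two of its vertices form a symmetric edge, which is checkable in polynomial time. So the whole content is the hardness.

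For hardness I would reduce from directed Hamiltonicity, or more robustly from 3SAT in a way parallel to the proof of Theorem~\ref{thm:Forb-TD}. In fact the cleanest route is to reuse that construction: from a 3SAT instance $\psi$ with clauses $C_1,\dots,C_m$ build the two-colour structure $\bA$ of Theorem~\ref{thm:Forb-TD}, and then collapse the two relations into a single binary relation. The idea is that a blue edge should become a non-symmetric (one-directional) arc and a red edge should become a symmetric edge; concretely, put an arc $(u,v)$ in the digraph $\bD$ whenever $E_b^{\bA}(u,v)$, and put both $(u,v)$ and $(v,u)$ whenever $E_r^{\bA}(u,v)$. One must check that these do not interfere: in $\bA$ there is never both a blue arc and a red arc on the same pair in a way that would accidentally symmetrise a blue arc — and indeed, since $E_r^\bA$ encodes ``$c_i^k$ is not the negation of $c_j^l$'', a blue arc $(a_i^j,a_{i+1}^k)$ also carries a red (hence symmetric) pair exactly when the corresponding literals are non-contradictory, so some care is needed. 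To avoid this clash I would instead subdivide every blue arc once, routing it through a fresh private vertex with no other incident edges; a subdivided arc can never be part of a symmetric edge, and a directed cycle through the subdivision vertices corresponds exactly to a directed blue cycle in $\bA$, i.e. to a copy of $\mathbb{TD}_n$ whose vertices form a red clique — which by Theorem~\ref{thm:Forb-TD} exists iff $\psi$ is satisfiable.

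The key steps, in order, are: (1) state coNP membership via the directed-cycle witness; (2) take the reduction digraph $\bA$ from Theorem~\ref{thm:Forb-TD}, form $\bD$ by making red edges symmetric and subdividing each blue arc through a private new vertex; (3) show that if $\bD$ has a directed cycle with no symmetric chord, then after contracting subdivision vertices we obtain a directed cycle in $(\,[m]\text{-indexed vertices},E_b^\bA\,)$ lying inside a set with no red edge absent — hence a homomorphic copy of some $\mathbb{TD}_n$ — hence $\psi$ is satisfiable; (4) conversely, from a satisfying assignment extract the directed blue cycle of Theorem~\ref{thm:Forb-TD}, subdivide it to get a directed cycle of $\bD$, and check it has no symmetric chord because all its non-subdivision vertices came from distinct clauses with pairwise non-contradictory literals, so in $\bA$ they are a red clique — wait, that is the wrong direction, so here instead I must argue that a satisfying assignment yields a directed cycle that IS bad, which forces the literals to be pairwise compatible, and compatibility is exactly what a satisfying assignment provides. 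The main obstacle I anticipate is step (2)–(4): getting the correspondence between ``symmetric chord of a directed cycle'' and ``red edge inside the clique'' to line up so that bad cycles exist on exactly the satisfiable side. The subdivision trick handles the blue/red overlap, but one still has to be careful that no short directed cycle of $\bD$ (e.g. a $2$-cycle formed by a symmetric red edge, or a cycle using both orientations of one red edge) is accidentally ``bad'', and that no long directed cycle avoids symmetric chords for a spurious reason; pinning down a clean invariant — ``every directed cycle of $\bD$ either uses a symmetric red pair or, after contraction, is a directed cycle in the blue $m$-layer digraph hitting each layer at most once'' — is the crux of the argument.
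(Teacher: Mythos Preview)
Your reduction has the central correspondence inverted. In the structure $\bA$ of Theorem~\ref{thm:Forb-TD}, the red relation $E_r$ encodes \emph{compatibility} (``$c_i^k$ is not the negation of $c_j^l$''), so a satisfying assignment produces a red \emph{clique} on the chosen literal-vertices. If you map red to symmetric, the directed blue cycle coming from a satisfying assignment then has a symmetric chord between \emph{every} pair of its literal-vertices; it is as far from a bad cycle as possible. You noticed something was off (``wait, that is the wrong direction''), but the attempted repair does not fix it: with your encoding, a bad cycle in $\bD$ (after contracting the subdivision vertices) would require the $m$ chosen literals to be pairwise \emph{contradictory}, which is impossible once $m\ge 3$. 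Hence for any 3SAT instance with at least three clauses your $\bD$ contains no bad directed cycle at all, irrespective of whether $\psi$ is satisfiable, and the reduction carries no information. (Your step~(3) already shows the confusion: ``no symmetric chord'' under your encoding means \emph{no} red edge present, not ``no red edge absent''.)

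The fix --- and what the paper does --- is to flip the role of the second relation: put symmetric edges between \emph{contradictory} literal-vertices (pairs with $c_i^k = \lnot c_j^l$) and keep one-way layered arcs for the clause-to-clause structure (the paper additionally routes the wrap-around through two fresh vertices $s,t$). A satisfying assignment then selects one literal per clause with no contradictory pair, yielding a directed cycle through the layers with no symmetric chord. Conversely, any directed cycle that induces no symmetric edge cannot use a symmetric arc (its two endpoints would already be a symmetric chord), so it must follow the layered arcs, hit each clause exactly once, and witness a satisfying assignment. With this corrected encoding the overlap between layered and symmetric arcs is harmless and no subdivision trick is needed.
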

\begin{proof}
    A hardness proof for this problem can be obtained with a script
    similar to the one used for the hardness proof of Theorem~\ref{thm:Forb-TD}. 
    We present the reduction, 
    and leave the proof of soundness and correctness to the reader.
    Consider an instance $\psi$ of 3SAT with variables $V$, clauses
    $C_1,\dots, C_m$ where each $C_i = (c_i^1, c_i^2, c_i^3)$ and $c_i^k \in \{v,\lnot v\}$
    for some $v\in V$. We construct a digraph $\bD$
    with vertices $s, t$ and $d_i^j$ for each $i\in[m]$ and $j\in [3]$.
    The edge set of $\bD$ consists of 
    \begin{itemize}
        \item all pairs $(s, d_1^k)$, $(d_i^j, d_{i+1}^k)$, $(d_m^j,t)$,
    and $(t,s)$ for $i\in [m-1]$ and $j,k\in [3]$, and 
    \item symmetric edges $(d_i^k, d_j^l)$ and $(d_j^l, d_i^k)$ for all 
    $i,j \in [m]$ and $k,l \in[3]$ such that $c_i^k = \lnot c_j^l$. 
    \end{itemize}
    Using similar arguments as in the proof of Theorem~\ref{thm:Forb-TD},
    one can verify that $\psi$ is satisfiable if and only if there is a directed
    cycle in $\bD$ that induces no symmetric edge. 
\end{proof}

\begin{corollary}\label{cor:EEU}
    There are $\exists^2\forall$ and $\exists \forall \exists$ (digraph) formulas $\phi$ and $\psi$
    such that  $\HER(\phi)$ and $\HER(\psi)$ are $\coNP$-complete.
\end{corollary}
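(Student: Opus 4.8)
The plan is to observe that Corollary~\ref{cor:EEU} follows almost immediately from Lemma~\ref{lem:EEU} and Theorem~\ref{thm:hard-cycles}; the only work is some quantifier-prefix bookkeeping. For the $\exists^2\forall$ case I would simply take $\phi$ to be the sentence~\eqref{eq:symedge}, i.e.
$$\phi := \exists x,y\,\forall a\,\bigl(\lnot E(x,a)\lor[E(x,y)\land E(y,x)]\bigr),$$
a digraph sentence with quantifier prefix $\exists\exists\forall=\exists^2\forall$. By Lemma~\ref{lem:EEU}, $\HER(\phi)$ is exactly the class of finite digraphs in which every directed cycle induces a symmetric edge. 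This class is coNP-hard by Theorem~\ref{thm:hard-cycles}, and $\HER(\phi)\in\coNP$ because $\HerFO$ is contained in $\coNP$ (as noted in Section~\ref{sect:prelims}); hence $\HER(\phi)$ is coNP-complete.

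For the $\exists\forall\exists$ case I would set
$$\psi := \exists x\,\forall a\,\exists y\,\bigl(\lnot E(x,a)\lor[E(x,y)\land E(y,x)]\bigr),$$
obtained from $\phi$ by moving $\exists y$ to the right of $\forall a$; this is a digraph sentence with quantifier prefix $\exists\forall\exists$. The key point — precisely the observation already used at the start of the proof of Theorem~\ref{thm:Forb-TD} — is that $\phi$ and $\psi$ are \emph{logically equivalent} sentences. Indeed, in the quantifier-free part the variable $y$ occurs only inside the disjunct $E(x,y)\land E(y,x)$, which does not mention $a$, so one may push $\exists y$ inside the disjunction, $\exists y\,(\lnot E(x,a)\lor C(x,y))\equiv \lnot E(x,a)\lor\exists y\,C(x,y)$, and then over a nonempty domain $\forall a\,(\lnot E(x,a)\lor D(x))\equiv(\forall a\,\lnot E(x,a))\lor D(x)$; the same manipulation applied to $\phi$ shows that both $\phi$ and $\psi$ are equivalent to ``there is a sink, or there is a symmetric edge.'' Since $\phi\equiv\psi$, a finite digraph hereditarily satisfies $\phi$ iff it hereditarily satisfies $\psi$, so $\HER(\psi)=\HER(\phi)$ is coNP-complete as well.

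I do not expect a genuine obstacle here: given that we may assume Theorem~\ref{thm:hard-cycles} and Lemma~\ref{lem:EEU}, the corollary reduces to exhibiting the two sentences above and verifying the elementary logical equivalence $\phi\equiv\psi$. The only step that carries real content is Theorem~\ref{thm:hard-cycles} itself, whose proof is only sketched in the excerpt (the $3$SAT reduction with the $s$–$t$ chain of vertices $d_i^j$ and symmetric edges encoding literal complementarity), and which is not part of what we are asked to prove here.
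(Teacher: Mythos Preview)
Your proposal is correct and matches the paper's proof essentially verbatim: take $\phi$ to be the sentence~\eqref{eq:symedge} from Lemma~\ref{lem:EEU}, take $\psi$ to be its $\exists x\,\forall a\,\exists y$ reordering, observe that the two are logically equivalent, and conclude via Lemma~\ref{lem:EEU} and Theorem~\ref{thm:hard-cycles}. Your added justification of the equivalence $\phi\equiv\psi$ (pushing $\exists y$ past $\forall a$ since $y$ does not interact with $a$) is more explicit than what the paper writes, but the argument is the same.
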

\begin{proof}
    For $\phi$ consider the formula from Lemma~\ref{lem:EEU} and for $\psi$ consider
    the quantifier reordering $\exists x\forall a\exists y$ of $\phi$, and notice that
    $\phi$ and $\psi$ are logically equivalent. The claim now follows from 
    Lemma~\ref{lem:EEU} and Theorem~\ref{thm:hard-cycles}.
\end{proof}

\subsection*{Prefix classification}
The classification for general relational signatures follows from the lemmas proved earlier in this section. 

\begin{theorem}\label{thm:Q-classification}
    Let $\tau$ be a relational signature which is not monadic. For every quantifier prefix
    $Q\in \{\exists,\forall\}^\ast$ one of the following statements hold:
    \begin{itemize}
        \item $Q $ is of the form $\forall^\ast\exists^\ast$, or of the form $\forall^\ast\exists\forall^\ast$, and in this case
        $\HER(\phi)$ is in $\PO$ for every first-order $\tau$-sentence $\phi$ with quantifier prefix $Q$,  or
        \item $Q$ contains a subword $\exists\exists\forall$ or  $\exists\forall\exists$, and
        in this case there is a first-order $\tau$-sentence $\phi$ with quantifier prefix $Q$ such that $\HER(\phi)$ is $\coNP$-complete.
    \end{itemize}
\end{theorem}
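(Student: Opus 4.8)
The plan is to handle the two cases of the dichotomy separately, using the lemmas and theorems already established. The overall structure: first decide which case a given prefix $Q$ falls into by a word-combinatorial analysis, then invoke the appropriate earlier result.

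\textbf{The tractable case.} Suppose $Q$ is of the form $\forall^\ast\exists^\ast$ or $\forall^\ast\exists\forall^\ast$. If $Q$ has the form $\forall^\ast\exists^\ast$, then any $\phi$ with prefix $Q$ is a $\forall^\ast\exists^\ast$-sentence, and Corollary~\ref{cor:U*E*} gives that $\HER(\phi)$ is universally definable, hence in $\PO$. If $Q$ has the form $\forall^\ast\exists\forall^\ast$, then $\phi$ is a $\forall^\ast\exists\forall^\ast$-sentence, and Theorem~\ref{thm:U*EU*} (via Algorithm~\ref{alg:main2}) gives $\HER(\phi)\in\SNP\cap\PO$. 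So in the first case there is genuinely nothing to prove beyond citing these results. One should also note that these two forms together cover exactly the prefixes with at most one block of $\exists$'s that is moreover not followed by a second $\exists$ after any $\forall$ --- i.e., prefixes in which $\exists$ appears in at most one maximal block and, if a $\forall$ precedes that block, the block has length $1$. (Equivalently: $Q$ is in the tractable case iff $Q$ contains neither $\exists\exists\forall$ nor $\exists\forall\exists$ as a subword.)

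\textbf{The hard case.} Suppose $Q$ contains $\exists\exists\forall$ or $\exists\forall\exists$ as a subword; write $Q = Q_{\mathrm{pre}}\, W\, Q_{\mathrm{suf}}$ where $W\in\{\exists\exists\forall,\exists\forall\exists\}$. By Corollary~\ref{cor:EEU} there is a digraph formula $\phi_0$ whose quantifier prefix is exactly $W$ and with $\HER(\phi_0)$ coNP-complete (the formula from Lemma~\ref{lem:EEU} for $W=\exists\exists\forall$, its reordering for $W=\exists\forall\exists$). The task is to ``pad'' $\phi_0$ with dummy quantifiers, in front and behind, without changing $\HER$. The standard trick is: a block $Q_ix_i$ of a variable $x_i$ that does not occur in the quantifier-free part $\psi$ can always be added, since $\forall x.\,\psi \equiv \psi$ and $\exists x.\,\psi \equiv \psi$ when $x\notin\mathrm{free}(\psi)$ (here we use that domains are non-empty). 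Concretely, I would set
\[
\phi := Q_{\mathrm{pre}}' \; \phi_0' \; Q_{\mathrm{suf}}',
\]
where $Q_{\mathrm{pre}}'$ and $Q_{\mathrm{suf}}'$ quantify fresh variables in the pattern prescribed by $Q_{\mathrm{pre}}$ and $Q_{\mathrm{suf}}$, and $\phi_0'$ is $\phi_0$ with its quantifier-free part conjoined with a trivially true atom to absorb the new variables if one wants to be pedantic (or simply leave $\psi$ untouched). Then $\phi$ is logically equivalent to $\phi_0$ on every structure, so $\HER(\phi) = \HER(\phi_0)$, which is coNP-complete; and $\phi$ has quantifier prefix $Q$. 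Finally, this formula uses a single binary symbol, hence is a $\tau$-formula for any non-monadic $\tau$ (adjoin the binary symbol of $\tau$; if $\tau$ has a binary relation symbol we use it, and if its only non-monadic symbols have higher arity we encode a binary relation via a fixed substitution, or more simply note that $\tau$ non-monadic means $\tau$ has a symbol of arity $\ge 2$ and the argument of Corollary~\ref{cor:EEU} adapts by fixing all but two coordinates to a single new element --- this is the one spot requiring a small remark).

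\textbf{Main obstacle.} The only real content is verifying the dichotomy in the word combinatorics: that every $Q\in\{\exists,\forall\}^\ast$ either has one of the two allowed forms or contains one of the two forbidden subwords, with no overlap. This is elementary: if $Q$ avoids both $\exists\exists\forall$ and $\exists\forall\exists$, then after the last $\forall$ (if any) every symbol is $\exists$, giving a suffix $\exists^\ast$; and before that, $\exists$ can occur only in isolated singletons each immediately followed by $\forall$ --- but a second such occurrence would need another $\exists$ after a $\forall$, contradicting... one checks that in fact at most one $\exists$ can occur before the final $\forall$-block, yielding $\forall^\ast\exists\forall^\ast$ or $\forall^\ast\exists^\ast$. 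I expect this case analysis to be the part most prone to off-by-one slips, so I would write it out carefully; everything else is a direct appeal to Corollary~\ref{cor:U*E*}, Theorem~\ref{thm:U*EU*}, and Corollary~\ref{cor:EEU} together with the padding lemma.
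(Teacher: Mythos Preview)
Your proposal is correct and follows essentially the same route as the paper: cite Corollary~\ref{cor:U*E*} and Theorem~\ref{thm:U*EU*} for the tractable prefixes, cite Corollary~\ref{cor:EEU} (plus padding with vacuous quantifiers and, for signatures without a binary symbol, simulating one via a higher-arity $R$) for the hard prefixes. The paper is simply terser---it declares the two cases ``clearly'' disjoint and complementary and says the hard case ``follows from'' Corollary~\ref{cor:EEU}---so your explicit word-combinatorics check and padding argument just spell out what the paper leaves implicit.
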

\begin{proof}
    Clearly, both items describe  disjoint and complementary cases. The claim in the first item follows from 
    Corollary~\ref{cor:U*E*} and Theorem~\ref{thm:U*EU*}. 
    If $\tau$ contains a binary relation symbol, then the claim in the second item 
    follows from Corollary~\ref{cor:EEU}. 
    Otherwise, $\tau$ must contain a
    relation $R$ of arity at least three;
    however, we can use $R$ to model a binary relation, so the claim also holds in this case.
\end{proof}

Using Theorem~\ref{thm:Forb-TD} we can present a similar dichotomy to Theorem~\ref{thm:Q-classification}
even for the intersection of HerFO and CSPs.

\begin{corollary}\label{cor:Q-prefix-CSP}
    For every quantifier prefix $Q\in \{\exists,\forall\}^\ast$ one of the following statements hold:
    \begin{itemize}
        \item $Q$ is of the form $\forall^\ast\exists^\ast$, or of the form $\forall^\ast\exists\forall^\ast$, and in this case
        if $\CSP(\bA) = \HER(\phi)$ for some structure $\bA$ and some first-order sentence $\phi$ with quantifier prefix $Q$, then $\CSP(\bA)$
        is in $\PO$, or
        \item $Q$ contains a subword $\exists\exists\forall$ or  $\exists\forall\exists$, and
        in this case there is a structure $\bB$ such that $\CSP(\bB) = \HER(\phi)$ for some first-order sentence $\phi$ with 
        quantifier prefix $Q$, and $\CSP(\bB)$ is $\coNP$-complete.
    \end{itemize}
\end{corollary}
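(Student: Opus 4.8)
The plan is to read the tractable item off the results already established for the $\forall^\ast\exists^\ast$ and $\forall^\ast\exists\forall^\ast$ fragments, and to obtain the hard item by \emph{padding} the quantifier prefix of the coNP-complete CSP of Theorem~\ref{thm:Forb-TD} with vacuous (dummy) quantifiers. For the first item there is essentially nothing to do: if $Q$ is of the form $\forall^\ast\exists^\ast$ or $\forall^\ast\exists\forall^\ast$ and $\CSP(\bA) = \HER(\phi)$ for some structure $\bA$ and some sentence $\phi$ with quantifier prefix $Q$, then $\HER(\phi)$ is in $\PO$ by Corollary~\ref{cor:U*E*} or Theorem~\ref{thm:U*EU*} (according to the shape of $Q$), and hence so is $\CSP(\bA)$. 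Note that this needs no assumption on the signature.

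For the second item, fix $Q\in\{\exists,\forall\}^\ast$ having $\exists\exists\forall$ or $\exists\forall\exists$ as a subword; say $\exists\exists\forall$ is a subword, the other case being symmetric. I would start from the sentences furnished by Theorem~\ref{thm:Forb-TD}: writing $\theta$ for the quantifier-free matrix of $\phi_\mathcal T$, the sentences $\phi_\mathcal T=\exists x\exists y\forall a.\,\theta$ and $\phi_\mathcal T'=\exists x\forall a\exists y.\,\theta$ are logically equivalent and satisfy $\HER(\phi_\mathcal T)=\HER(\phi_\mathcal T')=\Forb(\mathcal T)=\CSP(\bB_\mathcal T)$, a coNP-complete CSP. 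Since $\exists\exists\forall$ is a subword of $Q$, we may factor $Q=R_1\,\exists\,R_2\,\exists\,R_3\,\forall\,R_4$ with $R_1,R_2,R_3,R_4\in\{\exists,\forall\}^\ast$. Let $\phi$ be the prenex sentence with matrix $\theta$ whose prefix reads, in order: a block of quantifiers realizing the word $R_1$ and binding fresh variables not occurring in $\theta$, then $\exists x$, then a block realizing $R_2$ over fresh variables, then $\exists y$, then a block realizing $R_3$ over fresh variables, then $\forall a$, then a block realizing $R_4$ over fresh variables. By construction $\phi$ has quantifier prefix exactly $Q$. (If instead $\exists\forall\exists$ is the available subword, run the same construction starting from $\phi_\mathcal T'$ and a factorization $Q=R_1\,\exists\,R_2\,\forall\,R_3\,\exists\,R_4$.)

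To conclude, I would invoke the elementary fact that, because all structures under consideration have non-empty domain, a quantifier---existential or universal---over a variable that does not occur in a formula may be deleted without changing its truth value in any structure; applying this to the fresh variables of $\phi$ shows that $\phi$ is logically equivalent to $\phi_\mathcal T$ (resp.\ $\phi_\mathcal T'$). Logically equivalent sentences have the same substructure-closed models, hence the same hereditary models, so $\HER(\phi)=\HER(\phi_\mathcal T)=\CSP(\bB_\mathcal T)$. Taking $\bB:=\bB_\mathcal T$ then yields a structure with $\CSP(\bB)=\HER(\phi)$, with $\phi$ of quantifier prefix $Q$, and with $\CSP(\bB)$ coNP-complete.

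I do not expect a genuine obstacle here; the argument is bookkeeping on top of Theorems~\ref{thm:Q-classification} and~\ref{thm:Forb-TD}. The two points worth stating carefully are (i) that the two cases on $Q$ in the statement are exactly the dichotomy of Theorem~\ref{thm:Q-classification}, so that the two items are genuinely exhaustive and mutually exclusive (here ``subword'' must be read as scattered subword, which is what makes the complement of $\forall^\ast\exists^\ast\cup\forall^\ast\exists\forall^\ast$ coincide with the prefixes containing $\exists\exists\forall$ or $\exists\forall\exists$), and (ii) the vacuity of dummy quantifiers over non-empty domains, which is precisely what makes the padding preserve both the desired prefix and the equality $\HER(\phi)=\CSP(\bB_\mathcal T)$.
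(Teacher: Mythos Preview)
Your proposal is correct and follows the route the paper intends: the paper states this corollary without an explicit proof, merely indicating (in the sentence preceding the statement) that it is obtained ``using Theorem~\ref{thm:Forb-TD}'' in place of Corollary~\ref{cor:EEU} in the argument for Theorem~\ref{thm:Q-classification}. Your padding of $\phi_{\mathcal T}$ (resp.\ $\phi_{\mathcal T}'$) with vacuous quantifiers to realise an arbitrary prefix containing $\exists\exists\forall$ (resp.\ $\exists\forall\exists$) as a scattered subword is exactly the omitted bookkeeping, and your observation that ``subword'' must be read as scattered subword for the two cases to be complementary is a point the paper leaves implicit.
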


\section{Conclusion and Open Problems}
\label{sect:open}
We introduced the hereditary first-order model checking problem $\HerFO$,
and presented a complexity classification for $\HerFO$ based on allowed quantifier prefixes.
A number of open problems are left for future research. 
\begin{enumerate}
    \item We conjecture that there are first-order sentences $\phi$ such that 
    $\HER(\phi)$ is coNP-intermediate (assuming $\P\neq \NP$).
    \item Is every finite-domain CSP which is in $\HerFO$ also in P? Prove this without complexity-theoretic assumptions.
    \item Characterize the finite-domain CSPs in $\HerFO$.
    \item Is every CSP in $\HerFO$ also of the form $\HER(\phi)$ for some negative connected sentence $\phi$?
    \item It it true that the tractability problem for $\HerFO$ is undecidable even for first-order
    sentences with  quantifier prefix  $\exists\exists\forall$ (assuming $\P\neq\NP$)? Compare to Corollary~\ref{cor:undecidablity-fragments}
    and to the first item of Theorem~\ref{thm:Q-classification}. 
\end{enumerate}


\bibliographystyle{abbrv}
\bibliography{global.bib}

\begin{thebibliography}{10}

\bibitem{Ackermann}
W.~Ackermann.
\newblock Beitr{\"a}ge zum {E}ntscheidungsproblem der mathematischen {L}ogik.
\newblock {\em Mathematische Annalen}, 112(1):419--432, 1936.

\bibitem{banachMFCS24}
M.~Bannach, F.~Chudigiewitsch, and T.~Tantau.
\newblock {On the Descriptive Complexity of Vertex Deletion Problems}.
\newblock In R.~Kr\'{a}lovi\v{c} and A.~Ku\v{c}era, editors, {\em 49th International Symposium on Mathematical Foundations of Computer Science (MFCS 2024)}, volume 306 of {\em Leibniz International Proceedings in Informatics (LIPIcs)}, pages 17:1--17:14, Dagstuhl, Germany, 2024. Schloss Dagstuhl -- Leibniz-Zentrum f{\"u}r Informatik.

\bibitem{BernaysSchoenfinkel}
P.~Bernays and M.~Sch\"onfinkel.
\newblock Zum {E}ntscheidungsproblem der mathematischen {L}ogik.
\newblock {\em Mathematische Annalen}, 99:342--372, 1928.

\bibitem{Book}
M.~Bodirsky.
\newblock {\em Complexity of Infinite-Domain Constraint Satisfaction}.
\newblock Lecture Notes in Logic (52). Cambridge University Press, Cambridge, United Kingdom; New York, NY, 2021.

\bibitem{bodirsky2021}
M.~Bodirsky.
\newblock {\em Complexity of Infinite-Domain Constraint Satisfaction}.
\newblock Lecture Notes in Logic (52). Cambridge University Press, Cambridge, United Kingdom; New York, NY, 2021.

\bibitem{ll}
M.~Bodirsky and J.~K\'ara.
\newblock A fast algorithm and {D}atalog inexpressibility for temporal reasoning.
\newblock {\em ACM Transactions on Computational Logic}, 11(3), 2010.

\bibitem{BKR}
M.~Bodirsky, S.~Kn\"{a}uer, and S.~Rudolph.
\newblock Datalog-expressibility for monadic and guarded second-order logic.
\newblock In {\em 48th International Colloquium on Automata, Languages, and Programming, {ICALP} 2021, July 12-16, 2021, Glasgow, Scotland (Virtual Conference)}, pages 120:1--120:17, 2021.
\newblock Preprint available at https://arxiv.org/abs/2010.05677.

\bibitem{bondy2008}
J.~A. Bondy and U.~S.~R. Murty.
\newblock {\em Graph Theory}.
\newblock Springer, Berlin, 2008.

\bibitem{DecisionProblem}
E.~B{\"{o}}rger, E.~Gr{\"{a}}del, and Y.~Gurevich.
\newblock {\em The Classical Decision Problem}.
\newblock Perspectives in Mathematical Logic. Springer, 1997.

\bibitem{EbbinghausFlum}
H.-D. Ebbinghaus and J.~Flum.
\newblock {\em Finite Model Theory}.
\newblock Springer, Berlin, Heidelberg, New York, 1999.
\newblock Second edition.

\bibitem{Eiter2010}
T.~Eiter, G.~Gottlob, and T.~Schwentick.
\newblock {\em The Model Checking Problem for Prefix Classes of Second-Order Logic: A Survey}, pages 227--250.
\newblock Springer Berlin Heidelberg, Berlin, Heidelberg, 2010.

\bibitem{FederVardi}
T.~Feder and M.~Y. Vardi.
\newblock The computational structure of monotone monadic {SNP} and constraint satisfaction: {a} study through {D}atalog and group theory.
\newblock {\em {SIAM} Journal on Computing}, 28(1):57--104, 1999.

\bibitem{federLICS}
T.~Feder and M.~Y. Vardi.
\newblock Homomorphism closed vs.\ existential positive.
\newblock In {\em Proceedings of the Symposium on Logic in Computer Science ({LICS})}, pages 311--320, 2003.

\bibitem{fominTCS64}
F.~Fomin, P.~Golovach, and D.~Thilikos.
\newblock On the parameterized complexity of graph modification to first-order logic properties.
\newblock {\em Theory of Computing Systems}, 64:251--271, 2020.

\bibitem{fujitoDAM86}
T.~Fujito.
\newblock A unified approximation algorithm for node-deletion problems.
\newblock {\em Discrete Applied Mathematics}, 86(2):213--231, 1998.

\bibitem{GottlobKolaitisSchwentick}
G.~Gottlob, P.~G. Kolaitis, and T.~Schwentick.
\newblock Existential second-order logic over graphs: Charting the tractability frontier.
\newblock {\em Journal of the ACM}, 51:312--362, 2004.

\bibitem{Hodges}
W.~Hodges.
\newblock {\em A shorter model theory}.
\newblock Cambridge University Press, Cambridge, 1997.

\bibitem{krishamoorthySICOMP8}
M.~S. Krishnamoorthy and N.~Deo.
\newblock Node-deletion np-complete problems.
\newblock {\em SIAM Journal on Computing}, 8(4):619--625, 1979.

\bibitem{lewisJCSS20}
J.~M. Lewis and M.~Yannakakis.
\newblock The node-deletion problem for hereditary properties is np-complete.
\newblock {\em Journal of Computer and System Sciences}, 20(2):219--230, 1980.

\bibitem{ramanACMTA2}
V.~Raman, S.~Saurabh, and C.~R. Subramanian.
\newblock Faster fixed parameter tractable algorithms for finding feedback vertex sets.
\newblock {\em ACM Trans. Algorithms}, 2(3):403–415, July 2006.

\bibitem{reedORL32}
B.~Reed, K.~Smith, and A.~Vetta.
\newblock Finding odd cycle transversals.
\newblock {\em Operations Research Letters}, 32(4):299--301, 2004.

\bibitem{roseJMAA32}
D.~J. Rose.
\newblock Triangulated graphs and the elimination process.
\newblock {\em Journal of Mathematical Analysis and Applications}, 32:597--609--2:58, 1970.

\bibitem{RoseTajanLuecker}
D.~J. Rose, R.~E. Tarjan, and G.~S. Lueker.
\newblock Algorithmic aspects of vertex elimination on graphs.
\newblock {\em SIAM Journal on Computing}, 5(2):266--283, 1976.

\bibitem{Trakhtenbrot}
B.~Trakhtenbrot.
\newblock The impossibility of an algorithm for the decidability problem on finite classes.
\newblock {\em Proceedings of the USSR Academy of Sciences}, 70(4):569--572, 1950.
\newblock (in {R}ussian).

\end{thebibliography}

\appendix

\section{Examples}
\label{ap:examples}

\subsection{Polynomial-time solvable examples}
\label{ap:P-examples}
Example~\ref{ex:DAG} shows that $\CSP(\mathbb Q,<)$ is in  $\HerFO$ but not in FO.
There are also finite-domain CSPs that are in $\HerFO$, but not in $\FO$, as the following examples show. 
First, recall that the \textit{algebraic length} of an oriented path $P$ is the absolute value of the difference
between the number of forward edges and the number of backward edges in $P$. It is well-known (and straightforward to observe) that 
a digraph $D$ homomorphically maps to the directed path of length $2$ if and only if every oriented path
in $D$ has algebraic length at most $2$.

\begin{example}\label{ex:P2}
    The problem $\CSP(\vec{P_3})$ is in $\HerFO$. Consider a first-order formula $\phi$ saying that there
    is a loop, or there are (directed) edges $(x,y)$ and $(a,b)$ such that the following hold:
    \begin{itemize}[itemsep = 0.2pt]
        \item  $y$ has exactly one out-neighbour different from $b$, 
        \item $a$ has exactly one in-neighbour different from $x$, and
        \item every  $z\not\in\{x,y,a,b\}$ has exactly two out-neighbours different from $b$ and no in-neighbours, or
        $v$ has exactly two in-neighbours different from $x$ and no out-neighbours.
    \end{itemize}
    Suppose that a loopless digraph $\bD$ satisfies $\phi$.  It is not hard to observe that if $u$ and $v$ are
    witnessing vertices for $x$ and $b$, then there is an oriented path from $u$ to $v$ of algebraic length $3$. Hence, 
    if $\bD \not\in \HER(\lnot \phi)$, then either $\bD$ contains a loop or a path of algebraic length $3$, 
    and so $\bD \not\to \vec{P_3}$. Conversely, if $\bD\not\to \vec{P_3}$, then either $\bD$
    contains a loop or it has an
    oriented path of algebraic length $3$. In the former case, $\bD$ clearly satisfies $\phi$, and in the latter,     by choosing the shortest such path $v_1,\dots, v_n$ we find a substructure of $\bD$ that models $\phi$; namely,
    the substructure with vertex set $\{v_1,\dots, v_n\}$. Therefore, if $\bD\not\to \vec{P_3}$, then
    $\bD\not\in \HER(\lnot\phi)$. 
\end{example}

The following is an example of a CSP which is hereditarily definable by a
$\forall^\ast \exists \forall^\ast$-formula, and hence in P by Theorem~\ref{thm:U*EU*}.

\begin{example}\label{ex:cover-relation}
    For $n \ge 3$, denote by $\bA_n$ the oriented cycle obtained from the directed cycle $\vec{\bC}_n$ 
    be reversing the orientation of exactly one arc. We consider the $\CSP$ defined by homomorphically
    forbidding for every $n\ge 3$ the oriented cycle $\bA_n$ and the directed cycle $\vec{\bC}_n$. 
    Equivalently, a digraph 
    $\bD$ belongs to this $\CSP$ if and only if it is the cover relation of a partially ordered set.
    Consider the sentence
    \begin{align}
    \phi:= \exists x,y\forall z \exists a.~E(x,y) \land \big(E(z,a)~\lor~(z = x \land a\neq y \land E(x,a))~\lor
    (z = y)\big).\nonumber
    \end{align}
    If $\vec \bC_n$ or $\bA_n$, for some $n \geq 3$, homomorphically maps to  $\bD$,  
    then $\bD$ contains a substructure  satisfying $\phi$:
    if $\vec \bC_n$ homomorphically maps to $\bD$, 
    then in the image of the homomorphism, 
    there is an edge $(u,v)$ and every vertex $z$ has an out-neighbour $a$. 
    Otherwise, there is a homomorphism 
    from $\bA_n$ to $\bD$, 
    and $\bD$ contains an
     edge $(u,v)$ for which there
    is a directed $uv$-path $u, u_1,\dots, u_k, v$ where $k\ge 1$ (so $u_1\neq v$). 
    Notice that the digraph induced by $\{u,u_1,\dots, u_k,v\}$ in $\bD$ satisfies  $\phi$, 
    where $u$ and $v$ are witnesses for $x$ and $y$, respectively.

    Conversely, suppose that $\bD$ satisfies $\phi$. We will show that then there is a homomorphism 
    from $\vec \bC_n$ or from $\bA_n$, for $n \geq 3$, to $\bD$. This implies the statement, because if
    some substructure of $\bD$ satisfies $\phi$, there is a homomorphism from $\vec \bC_n$ or $\bA_n$ to this substructure,
    and hence in particular to $\bD$.
    Observe that if $\bD$ satisfies $\phi$ 
    there are $u,v\in D$ such that $(u,v)$ is an edge of $\bD$, and
    $u$ has an out-neighbour $w_1\neq v$. 
    Let $u,w_1,\dots, w_k$ be the
    largest directed path starting in $u$. If $w_k = v$, it follows that there is a homomorphism from
    the oriented cycle $\bA_{k+1}$ to $\bD$. Else, $w_k\neq v$, and since $\bD$ satisfies $\phi$, $w_k$ must
    have an out-neighbour $a$ and $a\in\{u,w_1,\dots, x_k\}$. Hence, there is a homomorphism from
    $\vec \bC_n$ to $\bD$ (and we can choose $n \geq 3$). 
    Therefore, the $\CSP$ considered in this example is hereditarily defined by the sentence $\lnot\phi$.
\end{example}

\subsection{HerFO and Datalog}

\emph{Datalog} can be seen as the subclass of SNP where we require that the first-order part $\psi$ of the SNP sentence 
\begin{itemize}
    \item 
is \emph{Horn}, i.e., written in conjunctive normal form such that each clause contains at most one positive literal, and 
\item is such that every positive literal in $\psi$ is existentially quantified.
\end{itemize}
A class ${\mathcal C}$ of finite models is \emph{in Datalog}
if there exists an SNP sentence $\Phi$ of the form described above such that a
finite structure is in  ${\mathcal C}$ if and only if it does \emph{not} satisfy
$\Phi$. Note  that if a class is in Datalog, then it is closed under homomorphisms\footnote{Our definition is standard, but different from the terminology of Feder and Vardi~\cite{FederVardi,federLICS}, who defined that $\CSP(\bB)$ is in Datalog if its complement is in Datalog in the standard sense.} and in P. 

A simple example of a CSP which is solved by a Datalog program is $\CSP(K_2)$. In Example~\ref{ex:CSP-K2} we  showed
that this class is not in $\HerFO$. 
There are also CSPs that are in $\HerFO$ and in P, but not in Datalog. 

\begin{example}\label{ex:and-or-scheduling}
For any positive integer $k$ consider a $(k+1)$-ary relation symbol $R$.
The problem $\CSP(\mathbb Q,\{(x,y_1,\dots, y_k)\colon x <\max\{y_1,\dots, y_k\})$ 
    is in P, but not in Datalog~\cite{ll}. 
    Notice that the problem is in $\HerFO$:
    consider the formula $$\exists x\forall y_1,\dots, y_k. \lnot R(x,y_1,\dots, y_k).$$
\end{example}

\begin{corollary}
    The classes of $\CSP$s in Datalog and in  $\PO\cap \HerFO$ are incomparable.
\end{corollary}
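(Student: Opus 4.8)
The plan is to exhibit one CSP witnessing each direction of incomparability. Both witnesses have already appeared in the excerpt, so the proof reduces to assembling the relevant facts. First I would recall that ``the classes are incomparable'' means: there is a CSP in Datalog that is not in $\HerFO$, and there is a CSP in $\P \cap \HerFO$ that is not in Datalog. (The second class is a subclass of $\P$ since every $\HerFO$ problem is in $\coNP$ and the tractable cases of interest lie in $\P$; but here we really only need the two concrete separating examples, not the full comparison of $\P$ with $\HerFO$.)

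For the first direction, I would take $\CSP(K_2)$, the class of bipartite digraphs. It is in Datalog: one can write a Datalog program (equivalently, a Horn SNP sentence of the required shape) detecting an odd closed walk, so by the paragraph preceding the corollary this class is in Datalog (hence closed under homomorphisms and in $\P$). On the other hand, Example~\ref{ex:CSP-K2} shows, via Lemma~\ref{lem:trivial-HerFO} and an Ehrenfeucht--Fra\"iss\'e argument comparing a long odd cycle with a long even cycle, that $\CSP(K_2) \notin \HerFO$. Thus $\CSP(K_2)$ is a CSP in Datalog but not in $\HerFO$.

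For the second direction, I would take $\CSP(\mathbb Q,\{(x,y_1,\dots,y_k)\colon x < \max\{y_1,\dots,y_k\}\})$ from Example~\ref{ex:and-or-scheduling}. That example records that this CSP is in $\P$ but not in Datalog (citing~\cite{ll}), and exhibits the $\exists \forall^k$-sentence $\exists x \forall y_1,\dots,y_k. \lnot R(x,y_1,\dots,y_k)$ whose hereditary model checking problem is exactly this CSP; so it lies in $\HerFO$ (and its membership in $\P$ also follows from Theorem~\ref{thm:U*EU*}). Hence this is a CSP in $\P \cap \HerFO$ that is not in Datalog. Combining the two directions yields that the classes of CSPs in Datalog and in $\P \cap \HerFO$ are incomparable, as claimed.

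There is no real obstacle here: the corollary is a bookkeeping consequence of Example~\ref{ex:CSP-K2}, Example~\ref{ex:and-or-scheduling}, and the definitions of Datalog and $\HerFO$ recalled just above it. The only mild point to be careful about is the Datalog membership of $\CSP(K_2)$: one should phrase the odd-closed-walk detector as a Horn SNP sentence all of whose positive literals are existentially quantified, so that it matches the definition of Datalog given in the text (as opposed to the Feder--Vardi convention mentioned in the footnote).
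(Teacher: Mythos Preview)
Your proposal is correct and matches the paper's approach exactly: the corollary is stated immediately after Example~\ref{ex:and-or-scheduling}, with the surrounding text explicitly citing $\CSP(K_2)$ (in Datalog, not in $\HerFO$) and the and-or-scheduling CSP (in $\P\cap\HerFO$, not in Datalog) as the two separating witnesses. Your remark about carefully matching the paper's Datalog definition for the odd-walk detector is appropriate but not something the paper itself spells out.
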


\subsection{HerFO and pp definitions}
A \emph{primitive positive formula} is an existential positive formula whose quantifier-free part
only uses variables and symbols from $\{\land, =\}\cup\tau$ (i.e., disjunction is forbidden). Given a structure $\bA$ we
say that a relation $R\subseteq A^r$
is \emph{primitively positively definable} (in $\bA$) if there is a primitive positive formula $\phi(x_1,\dots, x_r)$
such that $\bar a\in R$ if and only if $\bA\models\phi(\bar a)$. Primitive positive definitions are one
of the most elementary tools in constraint satisfaction theory. The following example (in combination with Example~\ref{ex:DAG})
shows that CSPs in $\HerFO$ are not preserved by primitive positive definitions:
specifically, they are not preserved by adding equality.

\begin{example}\label{ex:Q<=}
    The problem 
    $\CSP(\mathbb Q, <, =)$ is not hereditarily definable.  Consider the family of structures 
    $\bC_n$ and $\bD_n$ illustrated below (where undirected blue edges represent the (symmetric) binary
    relation $=$, and directed black edges represent the binary relation $<$). It is not hard to see that for every positive 
    integer $n$, 
    \begin{itemize}
        \item $\bC_n$ is a minimal obstruction of $\CSP(\mathbb Q, <, =)$, and 
        \item $\bD_n\in
    \CSP(\mathbb Q, <, =)$.
    \end{itemize}
    But again, for every $k \in {\mathbb N}$ one can choose  $n,m$ appropriately so that
    $\bC_n \equiv_k \bD_m$.
    \begin{center}
    \begin{tikzpicture}[scale = 0.8]

    \begin{scope}
        \node [vertex, label = {above:$v_1$}] (v1) at (-2.5,2) {};
        \node [vertex, label = {above:$v_2$}] (v2) at (-1,2) {};
        \node [vertex,  label = {above:$v_{n-1}$}] (v3) at (1,2) {};
        \node [vertex, label = {above:$v_n$}] (v4) at (2.5,2) {};
        \node [vertex, label = {below:$u_n$}] (u1) at (-2.5,-1.5) {};
        \node [vertex, label = {below:$u_{n-1}$}] (u2) at (-1,-1.5) {};
        \node [vertex, label = {below:$u_2$}] (u3) at (1,-1.5) {};
        \node [vertex, label = {below:$u_1$}] (u4) at (2.5,-1.5) {};
        \node [vertex, label = {left:$l_{n-1}$}] (l1) at (-2.5,1) {};
        \node [vertex, label = {left:$l_2$}] (l2) at (-2.5,-0.5) {};
        \node [vertex, label = {right:$r_2$}] (r1) at (2.5,1) {};
        \node [vertex, label = {right:$r_{n-1}$}] (r2) at (2.5,-0.5) {};
        
        \node  at (0,-2.75) {$\bC_n$};

        \foreach \from/\to in {v1/v2,  v3/v4, u4/u3, u2/u1}     
        \draw [arc] (\from) to (\to);
        \foreach \from/\to in {v1/l1, l2/u1, v4/r1, r2/u4}     
        \draw [edge, blue] (\from) to  (\to);
        \draw [thick, dotted] (-0.5,2) to  (0.5,2);
        \draw [thick, dotted] (-0.5,-1.5) to  (0.5,-1.5);
        \draw [thick, blue, dotted] (-2.5,0) to  (-2.5,0.5);
        \draw [thick, blue, dotted] (2.5,0) to  (2.5,0.5);
    \end{scope}
    
    \begin{scope}[xshift = 9cm]
        \node [vertex, label = {above:$v_1$}] (v1) at (-2.5,2) {};
        \node [vertex, label = {above:$v_2$}] (v2) at (-1,2) {};
        \node [vertex,  label = {above:$v_{n-1}$}] (v3) at (1,2) {};
        \node [vertex, label = {above:$v_n$}] (v4) at (2.5,2) {};
        \node [vertex, label = {below:$u_n$}] (u1) at (-2.5,-1.5) {};
        \node [vertex, label = {below:$u_{n-1}$}] (u2) at (-1,-1.5) {};
        \node [vertex, label = {below:$u_2$}] (u3) at (1,-1.5) {};
        \node [vertex, label = {below:$u_1$}] (u4) at (2.5,-1.5) {};
        \node [vertex, label = {left:$l_{n-1}$}] (l1) at (-2.5,1) {};
        \node [vertex, label = {left:$l_2$}] (l2) at (-2.5,-0.5) {};
        \node [vertex, label = {right:$r_2$}] (r1) at (2.5,1) {};
        \node [vertex, label = {right:$r_{n-1}$}] (r2) at (2.5,-0.5) {};
        
        \node  at (0,-2.75) {$\bD_n$};

        \foreach \from/\to in {v1/v2,  v3/v4, u3/u4, u1/u2}     
        \draw [arc] (\from) to (\to);
        \foreach \from/\to in {v1/l1, l2/u1, v4/r1, r2/u4}     
        \draw [edge, blue] (\from) to  (\to);
        \draw [thick, dotted] (-0.5,2) to  (0.5,2);
        \draw [thick, dotted] (-0.5,-1.5) to  (0.5,-1.5);
        \draw [thick, blue, dotted] (-2.5,0) to  (-2.5,0.5);
        \draw [thick, blue, dotted] (2.5,0) to  (2.5,0.5);
    \end{scope}

    \end{tikzpicture}
\end{center}
\end{example}

\subsection{The Henson digraphs}
\label{ap:Henson}

The \textit{Henson set} is the set ${\mathcal T}$ of tournaments $\mathbb T_n$ defined for positive
integer $n\ge 5$ as follows. The vertex set of $\mathbb T_n$ is $[n]$
and it contains the edges
\vspace{-2.4pt}
\begin{itemize}[itemsep = 0.8pt]
    \item $(1,n)$,
    \item $(i,i+1)$ for $i\in [n-1]$, and
    \item $(j,i)$ for $j > i+1$ and $(j,i)\neq (n,1)$.
\end{itemize}

It is straightforward to observe that, for any fixed (possibly infinite) set of tournaments $\calF$
(such as $\calT$), the class of oriented graphs
that  do not embed any tournament from $\calF$ 
is of the form $\CSP(\bB_{\mathcal F})$ for some countably infinite structure $\bB_{\mathcal F}$ (we may even choose $\bB_{\mathcal F}$ to be homogeneous). In the concrete situation of the class ${\mathcal T}$, $\CSP(\bB_{\mathcal T})$ is an example of $\coNP$-complete CSP (see, e.g.,~\cite[Proposition 13.3.1]{bodirsky2021}).
Moreover, it was shown in~\cite{BKR} that this CSP is in monadic second order logic. 
We strengthen this result by showing that $\CSP(\bB_\calT)$ is in $\HerFO$ --- and so, there is a monadic
second order sentence with only one existentially quantified unary predicate describing it (Observation~\ref{obs:HerFO-UMSO}).

We prove that the Henson set is first-order definable.  We use $\Cyc(x,y,z)$ as shorthand writing
for 
\[
\lnot E(x,x)\land \lnot E(y,y)\land \lnot E(z,z)\land E(x,y)\land E(y,z)\land E(z,x),
\]
i.e., $x,y,z$ is a directed $3$-cycle. We will also use standard set theoretic notation to 
simplify our writing, e.g., for variables $x,y_1,\dots, y_m$ we write
$x\in\{y_1,\dots, y_m\}$ instead of $\lor_{i\in[m]} x = y_i$, and similarly,
we write $|\{x_1,\dots, x_m\}| = k$ (resp.\ $\le k$) for the quantifier-free
formula stating that the cardinality of $\{x_1,\dots, x_m\}$ is $k$ (resp.\ at most $k$).
Consider the following first-order sentence.
\begin{align}
        \phi:=  \exists  x_1,&x_2,x_3,y_1,y_2,y_3 \forall z,a,b,c,d,e,f \exists g_1,g_2.\Cyc(x_1,x_2,x_2)\land \Cyc(y_1,y_2,y_3)\\ 
        \bigwedge \;& z\in\{x_1,x_2,y_2,y_3\} \lor \Cyc(y_3,z,x_1)\\
        \bigwedge \;& (z\in\{y_1,y_2\} \lor E(y_2,z))\land (z\in\{x_2,x_3\} \lor E(z,x_2)) \\
        \bigwedge  \;& \Cyc(a,b,c)\land \Cyc(a,b,d)\land \Cyc(a,b,e) \Rightarrow \big(|\{c,d,e\}|\le 2 \lor (x_1 = a\land y_3 = b)\big)\\
        \bigwedge  \;& \Cyc(a,b,c)\land \Cyc(a,b,d)\land \Cyc(b,c,e) \land \Cyc(c,a,f) \Rightarrow (|\{a,b,c,d,e,f\}|\le 5)\\
        \bigwedge  \;& \Cyc(a,b,c) \land \{x_1,y_3\}\not\subseteq \{a,b,c\} \Rightarrow \big[(\Cyc(a,b,g_1) \land \Cyc(b,c,g_2)) \nonumber\\
        \;&  ~~~\lor (\Cyc(b,c,g_1)\land \Cyc(c,a,g_2))\lor (\Cyc(c,a,g_1)\lor \Cyc(a,b,g_2))\big] 
 \end{align}

It is straightforward to translate $\phi$ to plain English, we do so in the following remark. We also depict
the first two lines of $\phi$ in Figure~\ref{fig:phiH}.

\begin{remark}\label{rmk:phi}
    A finite tournament $\mathbb T$ satisfies $\phi$ is and only if there are
    vertices $s_1,s_2,s_3$ (witnesses for $x_1,x_2,x_3$) and $t_1,t_2,t_3$ (witnesses for $y_1,y_2,y_3$)
    such that the following items hold.
    \begin{enumerate}
        \item $s_1,s_2,s_3$ and $t_1,t_2,t_3$ are directed cycles in $\mathbb T$.
        \item $t_3,t,s_1$ is a directed cycle for every $t\in T\setminus\{s_1,s_2,t_2,t_3\}$.
        \item There is an edge $E(t_2,t)$ and $E(s,s_2)$ for every $t\in T\setminus\{t_1,t_2\}$ and every $s\in T\setminus\{s_2,s_3\}$.
        \item Every edge $(s,t)\neq (s_1,t_3)$ belongs to at most $2$ directed $3$-cycles.
        \item At most $2$ edges of every directed $3$-cycle belong to $2$ different directed $3$-cycles.
        \item Every directed $3$-cycle that does not contain the edge $(s_1,t_3)$ contains exactly $2$ edges that
        belong to $2$ different directed $3$-cycles.
    \end{enumerate}
\end{remark}

\begin{figure}[ht!]
\centering
\begin{tikzpicture}[scale = 0.8]

  \begin{scope}
    \node [vertex, label = below:{$x_1$}] (x1) at (-4.5,1) {};
    \node [vertex, label = below:{$x_2$}] (x2) at (-3,1) {};
    \node [vertex, label = below:{$x_3$}] (x3) at (-1.5,1) {};
    \node [vertex, label = below:{$y_1$}] (y1) at (1.5,1) {};
    \node [vertex, label = below:{$y_2$}] (y2) at (3,1) {};
    \node [vertex, label = below:{$y_3$}] (y3) at (4.5,1) {};

    \node [vertex, fill = black, label = below:{$\scriptstyle z\not\in \{x_1,x_2,y_2,y_3\}$}] (z) at (0,-1) {};

    \foreach \from/\to in {x1/x2, x2/x3, y1/y2, y2/y3} 
    \draw [arc] (\from) to (\to);
    \foreach \from/\to in {y3/z, z/x1}   
    \draw [arc] (\from) to [bend left = 20] (\to);
    \foreach \from/\to in {y3/x3, y3/x2}   
    \draw [arc] (\from) to [bend right = 30] (\to);
    \foreach \from/\to in {y2/x3, y2/x2, y2/x1}   
    \draw [arc] (\from) to [bend right = 30] (\to);
    \foreach \from/\to in {y1/x3, y1/x2, y1/x1}   
    \draw [arc] (\from) to [bend right = 30] (\to);
    \draw [arc] (x1) to [bend left = 45] (y3);

  \end{scope}

\end{tikzpicture}
\caption{A partial depiction of the first three lines of $\phi$.}
\label{fig:phiH}
\end{figure}

\begin{lemma}\label{lem:Hensons->phi}
    If a tournament $\mathbb T$ belongs to the Henson set and has at least six vertices, 
    then $\mathbb T\models \phi$.
\end{lemma}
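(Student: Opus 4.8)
The statement claims that every Henson tournament $\mathbb{T}_n$ with $n \ge 6$ satisfies the big sentence $\phi$. Since $\phi$ is an $\exists^6\forall^7\exists^2$ sentence, I must exhibit the six existential witnesses, check the seven-fold universal clauses against all choices, and produce the two inner existential witnesses where required. The natural strategy is to read off the intended witnesses from the construction of $\mathbb{T}_n$ and then verify the six itemized conditions of Remark~\ref{rmk:phi} one by one, since the lemma is essentially the ``easy'' half of the characterization (the other direction, not stated here, would show $\phi$ forces the structure of a Henson tournament).

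\textbf{Choosing the witnesses.} Recall $\mathbb{T}_n$ has vertex set $[n]$, the ``spine'' edges $(i,i+1)$ for $i \in [n-1]$, the ``wrap'' edge $(1,n)$, and all ``back'' edges $(j,i)$ for $j > i+1$ except $(n,1)$. The only directed $3$-cycles in $\mathbb{T}_n$ are of the form $i, i+1, i+2$ using two consecutive spine edges and one back edge $(i+2, i)$ — with one exception caused by the wrap edge: the triple $1, n$ together with $n-1$ (since $(n-1,n)$, $(n,1)$? no, $(n,1)\notin E$) — I need to be careful here: the special $3$-cycle involving the wrap edge $(1,n)$ is $1, n, n-1$ is not a cycle; rather $n-1, n, 1$ would need $(1, n-1)$, which is a back edge only if... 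Actually the distinguished cycle is the one on vertices $\{1, n-1, n\}$ or $\{1, 2, n\}$; I would pin down exactly which $3$-cycle contains the edge $(1,n)$ by checking: $(1,n) \in E$, and we need $a$ with $(n,a), (a,1) \in E$; taking $a = 2$ gives $(n,2) \in E$ (since $n > 3$) and $(2,1) \in E$ (back edge), so $n, 2, 1$ — wait we need $(1,?)$... the cycle is $1 \to n \to 2 \to 1$? That needs $(n,2)$ and $(2,1)$, both present, so $\{1,2,n\}$ is a directed $3$-cycle containing $(1,n)$. So I would set $x_1 = 1$, $x_2 = 2$, $x_3 = 3$ (giving the $3$-cycle $1,2,3$ via back edge $(3,1)$), and $y_1 = n-2$, $y_2 = n-1$, $y_3 = n$ (giving the $3$-cycle $n-2, n-1, n$ via back edge $(n, n-2)$); the distinguished edge $(x_1, y_3) = (1, n)$ is then exactly the wrap edge, which is the unique edge lying in only one $3$-cycle (namely $\{1,2,n\}$), matching item (4).

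\textbf{Verifying the clauses.} With these witnesses I would check: \emph{(1)} the two $3$-cycles are as above; \emph{(2)} for $z \notin \{1, 2, n-1, n\}$ we need $n, z, 1$ a directed $3$-cycle, i.e. $(n,z), (z,1), (1,n) \in E$ — here $(1,n)$ is the wrap edge, $(z,1)$ is a back edge since $z \ge 3$, and $(n,z)$ is a back edge since $z \le n-2$, so this holds; \emph{(3)} $(y_2, z) = (n-1, z) \in E$ for $z \notin \{n-2, n-1\}$: true since $n-1 > z+1$ forces a back edge for $z \le n-3$, and for... $z = n$ we'd need $(n-1,n)$, a spine edge, fine — and dually $(s, 2) \in E$ for $s \notin \{2,3\}$, true since $s \ge 4 \Rightarrow (s,2)$ back edge, and $s = 1 \Rightarrow (1,2)$ spine edge; \emph{(4)} every $3$-cycle $\{i, i+1, i+2\}$ lies in at most two $3$-cycles in the obvious sense, and the wrap cycle $\{1,2,n\}$ is the unique one containing the distinguished edge — this is the combinatorial heart and I expect it to need the most care, tracking which edges are shared between consecutive ``spine'' triangles; \emph{(5)} and \emph{(6)} are similar bookkeeping about shared edges of $3$-cycles, and for \emph{(6)} I must actually produce the inner witnesses $g_1, g_2$: given a $3$-cycle not containing $(1,n)$, it is some $\{i,i+1,i+2\}$, whose spine edges $(i,i+1)$ and $(i+1,i+2)$ each extend to a neighbouring $3$-cycle ($\{i-1,i,i+1\}$ and $\{i+1,i+2,i+3\}$), so I choose $g_1, g_2$ accordingly, handling boundary cases $i = 1$ and $i = n-2$ using the wrap structure.

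\textbf{Main obstacle.} The conceptual content is small — everything follows from knowing the exact list of directed $3$-cycles of $\mathbb{T}_n$ and their incidences — but the bookkeeping in clauses (4)–(6), especially the edge/triangle incidence counts near the wrap edge $(1,n)$ and the boundary triangles $\{1,2,3\}$, $\{n-2,n-1,n\}$, $\{1,2,n\}$, is where errors creep in; I would organize it by first proving a single structural claim (``the directed $3$-cycles of $\mathbb{T}_n$ are exactly $\{i,i+1,i+2\}$ for $i \in [n-2]$ together with $\{1,2,n\}$, and two of them share an edge iff they are consecutive spine triangles or one is $\{1,2,n\}$ paired with $\{1,2,3\}$''), after which all six items reduce to mechanical checks. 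The hypothesis $n \ge 6$ is exactly what guarantees the witness vertices $1,2,3,n-2,n-1,n$ are distinct and that the spine triangles and the wrap triangle are genuinely different, so degenerate small cases are excluded.
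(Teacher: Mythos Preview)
Your choice of witnesses $(x_1,x_2,x_3,y_1,y_2,y_3)\mapsto(1,2,3,n-2,n-1,n)$ is exactly the paper's, and your overall plan (classify all directed $3$-cycles of $\mathbb T_n$, then read off the six items of Remark~\ref{rmk:phi}) is the paper's approach as well. But your structural claim about the $3$-cycles is wrong, and the mistake is concrete.

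You assert that $(2,1)\in E$ is a back edge and hence that $\{1,2,n\}$ is a directed $3$-cycle. It is not: back edges are pairs $(j,i)$ with $j>i+1$, and $2\not>2$, so $(1,2)\in E$ while $(2,1)\notin E$; on $\{1,2,n\}$ vertex $1$ has out-degree $2$. The actual list of directed $3$-cycles (this is what the paper states and uses) is
\[
\{\,i-1,i,i+1 : 2\le i\le n-1\,\}\ \cup\ \{\,1,k,n : 3\le k\le n-2\,\},
\]
so the wrap edge $(1,n)$ lies in $n-4$ directed $3$-cycles, not one. You in fact prove exactly this family in your verification of item~(2), where you correctly show $\Cyc(n,z,1)$ for every $z\in\{3,\dots,n-2\}$; your later ``single structural claim'' contradicts your own check.

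This matters for items (4)--(6). Item~(4) still holds, but not for the reason you give: the point is that $(1,n)$ is the \emph{only} edge lying in more than two $3$-cycles, not that it lies in a unique one. More importantly, your boundary handling for item~(6) will fail as written: for the triangle $\{1,2,3\}$ the edge $(1,2)$ lies in \emph{no} other $3$-cycle (there is no $\{0,1,2\}$ and $\{1,2,n\}$ is not a cycle), so the two ``extendable'' edges are $(2,3)$ (to $\{2,3,4\}$) and $(3,1)$ (to $\{1,3,n\}$). Symmetrically, for $\{n-2,n-1,n\}$ the extendable edges are $(n-2,n-1)$ and $(n,n-2)$ (to $\{1,n-2,n\}$), not the two spine edges. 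Once you correct the list of $3$-cycles, all six items go through by the same mechanical check you outline.
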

\begin{proof}
    Let $\mathbb T_n$ be a tournament in the Henson set for $n \ge 6$. Clearly, every directed cycle
    of $\mathbb T_n$ must contain some edge $(i,j)$ where $i < j$. It follows from the definition
    of $\mathbb T_n$ that the only such edges are $(1,n)$ and $(i,i+1)$ for $i\in [n-1]$. It should now
    be straightforward to notice that every directed cycle in $\mathbb T_n$ is of the form $n,i,1$ for some
    $i\in\{3,\dots, n-2\}$ or $i-1,i,i+1$ for $i\in\{2,\dots, n-1\}$. It thus follows that every edge
    other than $(1,n)$ belongs to at most $2$ different directed $3$-cycles, and that at most two edges
    of every directed $3$-cycle  belongs to at most $2$ different directed $3$-cycles. Moreover, 
    every $3$-cycle not containing $(1,n)$ has exactly two different edges belonging to two different directed
    cycles. Therefore, by evaluating $(x_1,x_2,x_3,y_1,y_2,y_3)\mapsto (1,2,3,n-2,n-1,n)$ we conclude
    that $\mathbb T_n\models \phi$.
\end{proof}

Let $\psi$ be any $\exists\forall$-formula axiomatizing the Henson tournament on $5$ vertices,
and let $\phi_{\calT}: = \psi\lor \phi$.

\begin{lemma}\label{lem:Henson-ax}
    A finite tournament $\mathbb T$ belongs to the Henson set if and only if
    $\mathbb T\models \phi_\calT$. 
\end{lemma}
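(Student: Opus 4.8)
The plan is to prove both directions of the biconditional $\mathbb T \models \phi_\calT \iff \mathbb T \in \calT$ for a finite tournament $\mathbb T$, where $\phi_\calT = \psi \lor \phi$ and $\psi$ axiomatizes $\mathbb T_5$. The easy direction (if $\mathbb T$ is in the Henson set, then $\mathbb T \models \phi_\calT$) is already essentially done: if $|T| = 5$ then $\mathbb T = \mathbb T_5 \models \psi$, and if $|T| \ge 6$ then $\mathbb T \models \phi$ by Lemma~\ref{lem:Hensons->phi}; note the Henson set only contains tournaments on $n \ge 5$ vertices, so these cases are exhaustive. The substance is the converse: assuming $\mathbb T \models \phi_\calT$, show $\mathbb T \in \calT$. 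If $\mathbb T \models \psi$ we are done, so assume $\mathbb T \models \phi$ and fix witnesses $s_1, s_2, s_3, t_1, t_2, t_3$ as in Remark~\ref{rmk:phi}.

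First I would reconstruct the linear-like skeleton. Items (1)--(3) of Remark~\ref{rmk:phi} force a rigid local structure around the witnesses: $s_1 s_2 s_3$ and $t_1 t_2 t_3$ are directed $3$-cycles, $t_3$ dominates toward $s_1$ through every other vertex (the cycle $t_3, z, s_1$), $t_2$ beats every vertex except $t_1, t_2$, and every vertex except $s_2, s_3$ beats $s_2$. I would use these to argue that $s_1$ is (almost) a source and $t_3$ is (almost) a sink in the ``backward'' sense matching the definition of $\mathbb T_n$, and that the edge $(s_1, t_3)$ plays the role of the special edge $(1,n)$. The key combinatorial claim is that conditions (4)--(6) about directed $3$-cycles pin down the tournament to be exactly $\mathbb T_n$ up to isomorphism: in $\mathbb T_n$ the directed $3$-cycles are precisely $\{i-1, i, i+1\}$ for $2 \le i \le n-1$ and $\{n, i, 1\}$ for $3 \le i \le n-2$, so consecutive $3$-cycles of the first type overlap in an edge, edges $(i, i+1)$ for $2 \le i \le n-2$ lie in exactly two $3$-cycles, and $(1,n)$ lies in many. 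I would show that (4)--(6) force this ``path of triangles'' pattern, which then lets me define an ordering $1, 2, \dots, n$ of $T$ so that the edges are exactly those of $\mathbb T_n$.

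The main obstacle I expect is turning the local cycle-counting conditions (4)--(6) into a global reconstruction of the linear order. The statement ``every edge $\neq (s_1,t_3)$ lies in at most two directed $3$-cycles'' is a strong structural constraint on tournaments, but extracting from it that the tournament is a ``staircase'' and not some other configuration requires care: I would proceed by induction, peeling off $s_1$ (the source) or $t_3$ (the sink), checking that the substructure on $T \setminus \{s_1\}$ (or $T \setminus \{t_3\}$) still satisfies the relevant conditions with shifted witnesses, and that the deleted vertex attaches in exactly the way the next vertex of $\mathbb T_{n-1}$ would. Here I would need that $\phi$ is not hereditary in the naive sense, so I cannot just invoke $\mathbb T \in \HER(\cdot)$; instead I must track the witnesses explicitly through the induction. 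A subtle point is handling small cases and the transition at the boundary (how $s_2, s_3$ and $t_1, t_2$ sit relative to the rest), which is why $\psi$ handles $n = 5$ separately and why $\phi$ is only claimed to work for $n \ge 6$. Once the staircase structure is established, defining the isomorphism $\mathbb T \cong \mathbb T_n$ and verifying it respects $E$ is routine.
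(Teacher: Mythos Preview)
Your forward direction and your identification of the structural role of items (1)--(6) are correct and match the paper. The gap is in the inductive mechanism you propose for the converse.

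Peeling off $s_1$ (or $t_3$) cannot reduce to a smaller instance of the same problem: the Henson tournaments form an antichain under embedding, so no proper subtournament of $\mathbb T_n$ is a Henson tournament, and in particular $\mathbb T_n \setminus \{1\}$ is not isomorphic to $\mathbb T_{n-1}$. More concretely, removing $s_1$ destroys the special edge $(s_1,t_3)$ and nothing replaces it; in $\mathbb T_n\setminus\{1\}$ the only directed $3$-cycles are consecutive triples $\{i,i{+}1,i{+}2\}$, and item (2) of Remark~\ref{rmk:phi} (the requirement $\Cyc(y_3,z,x_1)$ for all but four $z$) fails for every choice of new witnesses, since no candidate pair $(x_1,y_3)$ has an edge $x_1\to y_3$ together with large common ``between'' sets. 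So neither $\phi$ nor ``being Henson'' is available as an induction hypothesis on the deleted structure, and there is no evident weaker invariant that both survives the deletion and still pins down $\mathbb T_{n-1}$.

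The paper establishes exactly the ``path of triangles'' picture you describe in your second paragraph, but it does so \emph{inside} $\mathbb T$ rather than by shrinking it. Starting from $\bC_0=\{s_1,s_2,s_3\}$ it grows a chain $\bC_0,\bC_1,\bC_2,\dots$ of directed $3$-cycles, each sharing an edge with its predecessor and contributing one new vertex $v_i$; a minimal-counterexample argument using items (4)--(6) shows the chain is genuinely linear (i.e., $C_{i+1}=\{v_{i-1},v_i,v_{i+1}\}$), and items (2)--(3) force it to terminate at $t_1$. The listed vertices $s_1,s_2,s_3,v_1,\dots,v_{m-1},t_1,t_2,t_3$ then induce a Henson tournament, and a final application of item (4) shows no vertex of $T$ lies outside this list. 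In short, the right idea is already in your sketch; it needs to be executed as a forward construction of the ordering, not as an induction on $|T|$.
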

\begin{proof}
    It follows from Lemma~\ref{lem:Hensons->phi} and the definition of $\phi_\calT$, that
    if $\mathbb T$ is a tournament in the Henson set, then $\mathbb T\models\phi_\calT$.
    Suppose that a tournament $\mathbb T$ satisfies $\phi_\calT$. Notice that $\mathbb T$
    must have at least five vertices, and if $\mathbb T$ has exactly five vertices, then
    $\mathbb T$ is the Henson set on five vertices. Now suppose that $|T| \ge 6$ and
    thus $\mathbb T\models \phi$. Let $(s_1,s_2,s_3,t_1,t_2,t_3)$ be a wining move
    for existential player. Let $\bC_0$ be the directed $3$-cycle with vertex set $s_1,s_2,s_3$,
    let $v_1$ be a vertex such that $\Cyc(s_2,s_3,v_1)$ (such a vertex exists because $(s_1,s_2)$
    only belongs to $C_0$, and $\bC_0$ must contain two edges that belong to two different
    triangles). Let $\bC_1$ be the directed $3$-cycle with vertex set $s_2,s_3,v_1$. For
    $n\ge 2$, if there is a directed $3$-cycle $\bC$ different from $\bC_{n-2}$ and having a
    common edge with $\bC_{n-1}$, let $\bC_n = \bC$; otherwise, let $\bC_n = \bC_{n-1}$.
    Notice that if $t_3\not\in\{C_{n-1}\}$, then $\bC_n\neq \bC_{n-1}$ (follows from
    line (6) of definition of $\phi$, equiv.\ from the sixth item of Remark~\ref{rmk:phi}).
    If there is some integer $n$ such that $t_3\in C_n$, let $M$ be the minimum  such that
    $t_3\in C_M$ otherwise, let $M = \omega$. So, for every $n\le  M$ there is exactly one vertex
    $v_n\in C_n\setminus\{C_{n-1}\}$. We prove that $C_{n+1} = \{v_{n-1},v_n,v_{n+1}\}$ 
    for $2\le n <M$, and proceeding by contradiction, let $N+1$ be a minimum counterexample. 
    It follows by finite induction that for $n \le N$ the directed cycle $\bC_n$ has vertex
    set $\{v_{n-2},v_{n-1},v_n\}$ and edges $(v_{n-2},v_{n-1}),(v_{n-1},v_n),(v_n,v_{n-2})$ ---
    it can be convenient to have the following picture in mind.\\
    
    \begin{center}
    \begin{tikzpicture}[scale = 0.8]

  \begin{scope}
    \node [vertex, label = below:{$s_1$}] (x1) at (-6,1) {};
    \node [vertex, label = below:{$s_2$}] (x2) at (-4.5,1) {};
    \node [vertex, label = below:{$s_3$}] (x3) at (-3,1) {};
    \node [vertex, label = below:{$v_1$}] (v1) at (-1.5,1) {};

     \node [vertex, label = below:{$v_{N-3}$}] (s) at (1.5,1) {};
    \node [vertex, label = below:{$v_{N-2}$}] (u) at (3,1) {};
    \node [vertex, label = below:{$v_{N-1}$}] (v) at (4.5,1) {};
    \node [vertex, label = below:{$v_N$}] (w) at (6,1) {};

    \node [vertex, label = below:{$v_{N+1}$}] (C) at (4.5,0) {};
      
    \foreach \from/\to in {x1/x2, x2/x3, x3/v1, u/v, v/w, u/C, C/w, s/u}
    \draw [arc] (\from) to (\to);
    \foreach \from/\to in {x3/x1, v1/x2, w/u, v/s, w/s}   
    \draw [arc] (\from) to [bend right] (\to);
    \draw [dotted] (-1,1) to (1,1);
    
  \end{scope}  
\end{tikzpicture}
\end{center}

Since $E(v_N,s_2)$ (from second line of definition of $\phi$) and every edge other than
$(s_1,t_3)$ belongs to at most $2$ triangles, it follows by finite induction that 
$E(v_N,v_{N_3})$. With similar arguments we see that $E(v_{N+1},s_2)$ and $E(v_{N+1},v_{N-3})$. 
The latter implies that the edge $(v_{N-3},v_{N-2})$ belongs to three different directed triangles, 
contradicting the fact that $\mathbb T$ satisfies $\phi$. To conclude the proof let $m$ be the
minimum integer such that $v_n\in \{s_1,s_2,s_3,v_1,\dots, v_{n-1}, t_1,t_2,t_3\}$. If $m = 1$,
then $v_1\in\{t_1,t_2,t_3\}$, and since $E(s_3,t_1)$ it follows that $v_1 = t_1$, so 
$\{s_1,s_2,s_3,t_1,t_2,t_3\}$ induces a Henson tournament on $6$ vertices. Otherwise, 
if $m > 1$, it follows similarly as before that $E(v_m,s_1)$, $E(v_m,s_2)$, $E(v_m,s_3)$ and
$E(v_m,v_i)$ for $i< m-1$.  In particular, $v_m\not\in\{s_1,s_2,s_3,v_1,\dots, v_{m-1},t_3\}$, 
and since $E(t_3,v_m)$ then $v_m\neq t_2$, and thus $v_m = t_1$. Again, it is straightforward
to observe that $\mathbb T[\{s_1,s_2,s_3,v_1,\dots, v_{m-1},t_1,t_2,t_3\}]$ induces
a tournament of the Henson set. Finally, we show that $T = \{s_1,s_2,s_3,v_1,\dots, v_{m-1},t_1,t_2,t_3\}$. 
On the contrary, suppose that there is some $t\in T\setminus\{s_1,s_2,s_3,v_1,\dots, v_{m-1},t_1,t_2,t_3\}$. 
By the second line of the definition of $\phi$ there are edges $(t,s_1)$ and $(t_3,t)$. Let
$v$ be the maximum vertex with respect to the ordering $s_1 \le s_2 \le s_3 \le v_1 \le \dots \le t_3$
such that $E(t,v)$. Hence, if $u$ is the successor of $v$ (according to the previous linear ordering), 
then $\Cyc(v,u,t)$, and thus the edge $E(v,u)$ belongs to three different directed triangles,
contradicting the definition of $\phi$. Therefore, $\mathbb T$ is tournament in the Henson set. 
\end{proof}

The following theorem is a direct consequence of the previous lemma, because the class
of tournaments is universally definable.

\begin{theorem}
    The Henson set is first-order definable. In particular, it is definable
    by an $\exists^6\forall^6\exists^2$-formula.
\end{theorem}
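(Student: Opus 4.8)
The plan is to read the theorem off Lemma~\ref{lem:Henson-ax} and then push the defining sentence into prenex form with the claimed prefix; the whole difficulty is quantifier bookkeeping, and the idea that makes it work is that the two ``extra'' universal variables — the variable $y$ of $\psi$ and the variable $z$ of $\phi$ — each occur only in conjuncts that are syntactically detached from the rest of the matrix. For the first-order definability, I would simply note that the class of finite tournaments is axiomatised by a single $\forall^2$-sentence $\Theta$ (``no loops, antisymmetric, complete symmetric closure''), so that Lemma~\ref{lem:Henson-ax} immediately gives that $\Theta\wedge\phi_\calT$ defines the Henson set among all finite $\{E\}$-structures. It then remains to count quantifiers.

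First I would rewrite $\phi$. The only conjuncts of its matrix in which $z$ occurs are the two corresponding to items~2 and~3 of Remark~\ref{rmk:phi}; call them $C_1(z),C_2(z)$, and let $\vartheta$ be the conjunction of the remaining four conjuncts, which mention only $x_1,x_2,x_3,y_1,y_2,y_3,a,\dots,f,g_1,g_2$. Since $C_1,C_2$ involve no bound variable besides the existential ones, the block
\[
\forall z.\,\big(C_1(z)\wedge C_2(z)\big)\ \wedge\ \forall a,\dots,f\,\exists g_1,g_2.\,\vartheta
\]
is equivalent to $\forall a,\dots,f\,\exists g_1,g_2.\,\big(C_1[z/a]\wedge C_2[z/a]\wedge\vartheta\big)$, so $\phi$ is equivalent to a sentence $\phi^\flat$ with prefix $\exists^6\forall^6\exists^2$; I write $\mu(z_1,\dots,z_6,w_1,\dots,w_6,g_1,g_2)$ for its quantifier-free matrix, the $z_j$ renaming $x_1,x_2,x_3,y_1,y_2,y_3$ and the $w_j$ renaming $a,\dots,f$.

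Instead of prenexing the genuine disjunction $\psi\vee\phi$ (which would double the universal block), I would fold $\psi$ into the same quantifier block. Let $\mathrm{pat}_5(z_1,\dots,z_6)$ be the quantifier-free formula asserting that $z_1,\dots,z_6$ take exactly five distinct values which induce a copy of $\mathbb T_5$, and set
\[
\phi^\star:=\exists z_1,\dots,z_6\ \forall w_1,\dots,w_6\ \exists g_1,g_2.\ \big(\lnot\mathrm{pat}_5(\bar z)\vee w_1\in\{z_1,\dots,z_6\}\big)\wedge\big(\mathrm{pat}_5(\bar z)\vee \mu(\bar z,\bar w,\bar g)\big).
\]
Conjoining $\Theta$ and absorbing its two universal variables into $w_1,w_2$ keeps the prefix $\exists^6\forall^6\exists^2$, so it suffices to check that $\Theta\wedge\phi^\star$ defines the Henson set. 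If $\mathbb T$ is Henson on $n\ge 6$ vertices, then by Lemma~\ref{lem:Hensons->phi} the tuple $(1,2,3,n-2,n-1,n)$ consists of six distinct elements satisfying $\phi$, so $\mathrm{pat}_5$ fails on it, the first conjunct is vacuous, and the second reduces to $\phi$; if $\mathbb T\cong\mathbb T_5$, enumerate its five vertices (with one repetition) as $\bar z$, so $\mathrm{pat}_5(\bar z)$ holds, the first conjunct holds because $|T|=5$, and the second is vacuous. Conversely, if $\mathbb T\models\Theta\wedge\phi^\star$ with witnesses $\bar z$, then either $\mathrm{pat}_5(\bar z)$ holds, forcing $T\subseteq\{z_1,\dots,z_6\}$ and hence $\mathbb T\cong\mathbb T_5$, or $\mathrm{pat}_5(\bar z)$ fails, the second conjunct yields $\mathbb T\models\phi$, and Lemma~\ref{lem:Henson-ax} shows $\mathbb T$ is Henson.

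The step I expect to be the main obstacle is the last case: when $\mathrm{pat}_5(\bar z)$ fails but $|T|=5$ I need to know that $\mathbb T\models\phi$ forces $\mathbb T\cong\mathbb T_5$, i.e.\ that $\phi$ does not accidentally hold on a five-vertex tournament other than $\mathbb T_5$ — a finite verification, and precisely the reason the separate disjunct $\psi$ was introduced. Everything else — the separability of $z$, the equivalences used to merge universal variables, and the absorption of $\Theta$ — is routine prenex manipulation.
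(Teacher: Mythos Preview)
Your argument is correct, and for the bare first-order definability it coincides with the paper's proof: conjoin Lemma~\ref{lem:Henson-ax} with a universal tournament axiom. The paper, however, stops there and does not justify the specific prefix $\exists^6\forall^6\exists^2$; a naive prenexing of $\varphi\wedge(\psi\vee\phi)$ yields $\exists^6\forall^8\exists^2$, since $\phi$ already has seven universal variables and the single universal of $\psi$ cannot be merged across a disjunction. Your two extra manoeuvres---absorbing the detached universal variable $z$ into $a$ via the equivalence $(\forall z.\,C(z))\wedge(\forall\bar a\,\exists\bar g.\,\vartheta)\equiv\forall\bar a\,\exists\bar g.\,(C(a)\wedge\vartheta)$, and replacing the disjunction $\psi\vee\phi^\flat$ by the case-split formula $\phi^\star$ sharing a single $\exists^6\forall^6\exists^2$ block---are exactly what is needed to realise the stated prefix, and they are sound as you present them.

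Your closing worry is unnecessary. In the case where $\mathrm{pat}_5(\bar z)$ fails you have already deduced $\mathbb T\models\phi$, hence $\mathbb T\models\phi_{\mathcal T}$, and Lemma~\ref{lem:Henson-ax} (which you just invoked) says directly that $\mathbb T$ is a Henson tournament---regardless of whether $|T|=5$. No separate finite verification that $\phi$ cannot hold on a non-$\mathbb T_5$ five-vertex tournament is required; that fact is already subsumed in the lemma. So the ``main obstacle'' you flag is not an obstacle at all.
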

\begin{proof}
    Let $\varphi$ be any universal formula that axiomatizes the class of tournaments. 
    Then, $\varphi\land \phi_\calT$ axiomatizes the Henson set (Lemma~\ref{lem:Henson-ax}).
\end{proof}

Notice that if $\phi$ is the first-order sentence defining the Henson set $\mathcal T$, then 
an oriented graph $G$ is $\mathcal T$-free if and only if every substructure of $G$
is a loopless oriented graph that satisfies $\lnot\phi$. Thus, the following statement
is implied by the previous theorem.

\begin{corollary}\label{thm:HensonCSP}
    If $\calT$ is the Henson set, then the class of $\calT$-free oriented graphs is a 
    $\coNP$-complete $\CSP$ in $\HerFO$. 
\end{corollary}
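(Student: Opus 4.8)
\textbf{Proof plan for Corollary~\ref{thm:HensonCSP}.}
The plan is to assemble the corollary from two ingredients already in hand: the first-order definability of the Henson set (the preceding Theorem, via Lemma~\ref{lem:Henson-ax}), and the coNP-completeness of $\CSP(\bB_\calT)$ for the Henson set, which is quoted from the literature (e.g.~\cite[Proposition 13.3.1]{bodirsky2021}). First I would fix a first-order sentence $\phi_0$ axiomatizing membership in the Henson set \emph{among tournaments} --- concretely $\phi_0 := \varphi \wedge \phi_\calT$ where $\varphi$ is a universal sentence axiomatizing the class of tournaments, so that $\mathbb T \models \phi_0$ iff $\mathbb T$ is a tournament in $\calT$. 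The key observation is the equivalence already noted in the excerpt: an oriented graph $\bG$ is $\calT$-free (i.e.\ no tournament from $\calT$ embeds into $\bG$) if and only if every induced substructure of $\bG$ is a loopless oriented graph that fails $\phi_0$. This is because each $\mathbb T_n \in \calT$ is itself a tournament, hence embeds into $\bG$ precisely when it appears as an induced substructure; and conversely any induced substructure of $\bG$ that \emph{is} a copy of some $\mathbb T_n$ is detected by $\phi_0$, while induced substructures that are not tournaments are ruled out by the $\varphi$ conjunct.

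The second step is to package this as a single first-order sentence whose hereditary model-checking problem is the desired class. Let $\psi_{\mathrm{or}}$ be the (universal, quantifier-free-prefix-after-$\forall\forall$) sentence asserting ``$E$ is antisymmetric and irreflexive'', i.e.\ that the structure is a loopless oriented graph, and set
\[
\Psi := \psi_{\mathrm{or}} \wedge \lnot \phi_0 .
\]
Then $\HER(\Psi)$ is exactly the class of finite oriented graphs $\bG$ all of whose induced substructures are loopless oriented graphs failing $\phi_0$, which by the displayed equivalence is precisely the class of finite $\calT$-free oriented graphs, namely $\CSP(\bB_\calT)$. (One checks the two directions directly: if $\bG \in \CSP(\bB_\calT)$ then every induced substructure is again a $\calT$-free oriented graph, in particular satisfies $\psi_{\mathrm{or}}$ and cannot be a tournament in $\calT$, hence satisfies $\lnot\phi_0$; conversely if $\bG \notin \CSP(\bB_\calT)$ then some $\mathbb T_n \in \calT$ embeds, and the corresponding induced substructure satisfies $\phi_0$, violating $\Psi$ hereditarily.) This shows $\CSP(\bB_\calT) \in \HerFO$. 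Finally, coNP-completeness is inherited verbatim from the cited result: membership in coNP holds for every problem in $\HerFO$ (as noted in the introduction, since $\phi$-model-checking is polynomial and $\HER(\phi)$ is the complement of an NP problem), and coNP-hardness of $\CSP(\bB_\calT)$ is exactly~\cite[Proposition 13.3.1]{bodirsky2021}.

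I do not expect a genuine obstacle here: the corollary is essentially a bookkeeping consequence of the Theorem plus a known hardness fact, and the only point requiring a little care is making sure the ``loopless oriented graph'' conjunct $\psi_{\mathrm{or}}$ is included so that $\HER(\Psi)$ does not accidentally contain structures with loops or symmetric edges (for which $\phi_0$, built only to separate tournaments, would give no information). An alternative phrasing, if one prefers to keep the sentence over the pure digraph signature without the explicit orientedness conjunct, is to note that the class of oriented graphs is itself universally definable and hence in $\HerFO$, and that $\HerFO$ is closed under the intersection corresponding to conjunction of the defining sentences; but the direct argument above is cleaner. The mild ``risk'' is purely expository --- ensuring the reader sees why $\calT$-freeness for oriented graphs coincides with the hereditary failure of $\phi_0$, which hinges on every member of $\calT$ being a tournament and therefore closed under the (induced-substructure $=$ embedding) identification.
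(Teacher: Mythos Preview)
Your proposal is correct and follows essentially the same approach as the paper: the paper's proof consists of the single sentence preceding the corollary, which observes that an oriented graph $\bG$ is $\calT$-free if and only if every substructure of $\bG$ is a loopless oriented graph satisfying $\lnot\phi$ (for $\phi$ the sentence defining the Henson set), and then defers coNP-completeness to the cited \cite[Proposition 13.3.1]{bodirsky2021}. Your write-up is more explicit about packaging the orientedness axiom into the sentence $\Psi$ and about verifying the two directions, but the argument is the same.
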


\end{document}